\numberwithin{equation}{section}
\newtheoremstyle{fancy1}{10pt}{10pt}{\itshape}{12pt}{\textsc\bgroup}{.\egroup}{8pt}{ } 
\newtheoremstyle{fancy2}{10pt}{10pt}{}{12pt}{\itshape}{.}{8pt}{ }
\theoremstyle{fancy1}
\newtheorem{lem}[equation]{Lemma} 
\newtheorem{prop}[equation]{Proposition}
\newtheorem{thm}[equation]{Theorem}
\newtheorem*{thm*}{Theorem} 
\newtheorem{main}{Theorem}
\newtheorem*{main*}{Theorem}
\newtheorem*{cor*}{Corollary}
\newtheorem*{prop*}{Proposition}
\newtheorem*{problem*}{Problem}
\theoremstyle{fancy2}
\newtheorem{rem}[equation]{Remark}
\newtheorem*{rems*}{Remarks}
\newtheorem*{rem*}{Remark}
\newtheorem{example}{Example}
\newtheorem*{example*}{Example}
\newcommand{\cref}[1]{Corollary~\ref{#1}}
\newcommand{\eref}[1]{Example~\ref{#1}}
\newcommand{\lref}[1]{Lemma~\ref{#1}}
\newcommand{\pref}[1]{Proposition~\ref{#1}}
\newcommand{\rref}[1]{Remark~\ref{#1}}
\newcommand{\tref}[1]{Theorem~\ref{#1}}
\newcommand{\sref}[1]{Section~\ref{#1}}
\newcommand{\fref}[1]{Figure~\ref{#1}}
\newcommand{\e}{\epsilon}
\newcommand{\M}{\mathcal M}
\newcommand{\DD}{\mathcal D}
\newcommand{\R}{{\mathbb{R}}}
\newcommand{\N}{{\mathbb{N}}}
\newcommand{\fg}{{\mathfrak{g}}}
\newcommand{\fk}{{\mathfrak{k}}}
\newcommand{\fh}{{\mathfrak{h}}}
\newcommand{\fm}{{\mathfrak{m}}}
\newcommand{\fn}{{\mathfrak{n}}}
\newcommand{\fl}{{\mathfrak{l}}}
\newcommand{\fz}{{\mathfrak{z}}}
\newcommand{\pro}[2]{\langle #1 , #2 \rangle}
\def\con#1=#2(#3){#1 \equiv #2 \bmod{#3}}
\newcommand{\ml}{\langle}                    
\newcommand{\mr}{\rangle}
\newcommand{\tr}{\ensuremath{\operatorname{tr}}}
\newcommand{\diag}{\ensuremath{\operatorname{diag}}}
\newcommand{\Aut}{\ensuremath{\operatorname{Aut}}}
\newcommand{\rank}{\ensuremath{\operatorname{rk}}}
\newcommand{\Ad}{\ensuremath{\operatorname{Ad}}}
\newcommand{\Ric}{\ensuremath{\operatorname{Ric}}}
\newcommand{\ric}{\ensuremath{\operatorname{ric}}}
\newcommand{\Hess}{\ensuremath{\operatorname{Hess}}}
\newcommand{\Inn}{\ensuremath{\operatorname{Inn}}}
\DeclareMathOperator{\grad}{grad}
\DeclareMathOperator{\Div}{div}
\begin{document}

\title{Palais--Smale sequences for the prescribed Ricci curvature functional}

\author{Artem Pulemotov}
\address{University of Queensland}
\email{a.pulemotov@uq.edu.au}
\author{Wolfgang Ziller}
\address{University of Pennsylvania}
\email{wziller@math.upenn.edu}

\thanks{This research was supported by the Australian Government through the Australian Research Council's Discovery Projects funding scheme (project DP220102530).}

\begin{abstract}
We obtain a complete description of divergent Palais--Smale sequences for the prescribed Ricci curvature functional on compact homogeneous spaces. As an application, we prove the existence of saddle points on generalized Wallach spaces and several types of generalized flag manifolds. We also describe the image of the Ricci map in some of our examples.
\end{abstract}

\maketitle

\bigskip

The prescribed Ricci curvature problem consists in finding a Riemannian metric $g$ on a manifold $M$ such that
\begin{align*}
\Ric(g)=cT
\end{align*}
for some constant~$c$. 
The inclusion of $c$ is required since the Ricci tensor is invariant under scaling. The prescribed Ricci curvature problem has been studied by many authors since the 1980s. While its local variant is relatively well understood (see \cite{D85}) general results for global solutions are scarce. It is thus natural to make symmetry assumptions. More precisely, suppose that the metric $g$ and the tensor $T$ are invariant under a compact Lie group $G$ acting on~$M$. In the case where the quotient $M/G$ is one-dimensional, the problem was addressed by several authors in a few special situations; see ~\cite{RH84,CD94,AP13a,BK20}. The case where $M$ is a homogeneous space~$G/H$ has been studied more extensively. For a detailed overview of the results, see the survey~\cite{BP19} and the more recent references~\cite{AGP21,APZ21,LWa,LWb,PZ22}. In some simple situations, the equation can be solved explicitly. In general, this direct approach is hopeless since the algebraic systems involved are too complicated.

As is well known \cite{AP16}, $G$-invariant metrics with Ricci curvature $cT$ are, up to scaling, critical points of the scalar curvature functional $S$ on the set $\M_T=\M_T(G/H)$ of $G$-invariant metrics on $G/H$ subject to the constraint $\tr_gT=1$. It is thus natural to study the problem from a calculus of variations viewpoint. We will assume throughout the paper that $T$ is positive definite; in fact, the behavior changes significantly otherwise. The positivity of $T$ implies that the functional is bounded from above, so the first question is to determine when it has a global maximum. In~\cite{PZ22} we proved a general existence theorem for global maxima. On the other hand, there are many homogeneous spaces where the functional does not assume its supremum. The next step is thus to search for saddle critical points, using a version of the mountain pass lemma. This approach has been very successful in the case of homogeneous Einstein metrics; see~\cite{BWZ04,CB04,BK21}.
 
Mountain pass techniques require some form of compactness. The assumption imposed most commonly is the Palais--Smale condition. It postulates that every sequence of metrics $g_i$ with 
  \begin{align*}
  	\lim_{i\to\infty} S(g_i)=\lambda\in\mathbb R \quad \text{ and }\quad \lim_{i\to\infty}|\grad S_{|\M_T}(g_i)|_{g_i}=0
  \end{align*}
has a convergent subsequence. One can view this as excluding the possibility of a critical point at infinity. In our case, the Palais--Smale condition can be reformulates as
  \begin{align*}
  	\lim_{i\to\infty} S(g_i)=\lambda \qquad \text{and} \qquad \lim_{i\to\infty}|\Ric(g_i)-\lambda T|_{g_i}=0.
  \end{align*}
\smallskip
However, in contrast to the case of Einstein metrics, which are also critical points of a functional, this condition is never satisfied unless $H$ is maximal in~$G$.

For an intermediate subgroup $K$ with $\dim H<\dim K<\dim G$, consider the fibration
\begin{align*}\label{homog_fibr}
K/H\to G/H\to G/K
\end{align*}
with fiber $F=K/H$ and base $B=G/K$. Given homogeneous metrics $g_F$ on $F$ and $g_B$ on $B$, we define the metric $g=g_F+g_B$ on $G/H$ making this projection into a Riemannian submersion. Scaling the fiber and the base, and normalizing so that $\tr_gT=1$, we obtain a one-parameter family of metrics $g_t\in \M_T$, called a {\it canonical variation}.
 
Our main result classifies divergent Palais--Smale sequences for the functional $S_{|\M_T}$.

\begin{main}\label{main_1} Let $G/H$ be a compact homogeneous space and
$(g_i)\subset\M_T$ a divergent Palais--Smale sequence of homogeneous metrics with $\lim_{i\to\infty} S(g_i)=\lambda$. Then there exist an intermediate subgroup $K$ and a subsequence $(g_{i_k})$ such that the homogeneous metric $g_F$ on $K/H$ given by
  $$g_F=\lim_{k\to\infty}g_{i_k|K/H}$$
 satisfies
	\begin{align*}
		\Ric(g_F)=\lambda T_{|K/H} \qquad \text{and} \qquad \tr_{g_F}T_{|K/H}=1.
	\end{align*}
Conversely, given an intermediate subgroup $K$, every solution $g_F$ to the first of these equations can be extended, possibly after re-scaling, to a metric on $G/H$ whose canonical variation gives rise to a divergent Palais--Smale sequence $(g_i)$ with $\lim_{i\to\infty} S(g_i)=\lambda$.
\end{main}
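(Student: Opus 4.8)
The plan is to prove the two implications separately. Throughout, fix an $\Ad(H)$-invariant decomposition $\fg=\fh\oplus\fm_1\oplus\dots\oplus\fm_r$, orthogonal for a bi-invariant background $Q$ and diagonalizing $T=\sum_\alpha t_\alpha Q|_{\fm_\alpha}$ with $t_\alpha>0$, so that a diagonal invariant metric is $g=\sum_\alpha x_\alpha Q|_{\fm_\alpha}$, $\tr_gT=\sum_\alpha t_\alpha d_\alpha/x_\alpha$ with $d_\alpha=\dim\fm_\alpha$, and one has the standard closed formulas for $\Scal(g)$ and for the Ricci eigenvalues $r_\alpha(g)$ in terms of the $x_\beta$ and the structure constants $[\alpha\beta\gamma]$. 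As in \cite{PZ22}, along a Palais--Smale sequence one first reduces to diagonal metrics (passing to a subsequence on which $g_i$ is asymptotically diagonal in a fixed such decomposition); I take this reduction for granted.

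\emph{Converse direction.} Given intermediate $K$, write $\fk=\fh\oplus\fp$, $\fn=\fk^{\perp}$, with fibration $K/H\to G/H\to G/K$, and let $g_F$ on $K/H$ solve $\Ric(g_F)=\lambda\,T|_{K/H}$. Rescaling $g_F$, assume $\tr_{g_F}T|_{K/H}=1$; then $\Scal(g_F)=\tr_{g_F}\Ric(g_F)=\lambda$. Fix an invariant metric $g_B$ on $G/K$ and, for $\mu>0$, let $g_\mu=g_F+\mu\,g_B$ be the submersion metric with fibre $g_F$ and base $\mu\,g_B$; put $c_\mu=\tr_{g_\mu}T\to1$ and $\bar g_\mu=c_\mu g_\mu\in\M_T$. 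Blowing up the base makes $\bar g_\mu$ leave every compact subset of $\M_T$, so it remains to check the Palais--Smale conditions. O'Neill's submersion formulas, together with the fact that scaling up the base makes the O'Neill tensor and the base Ricci curvature of lower order in $\mu$, give $\Scal(g_\mu)=\Scal(g_F)+O(1/\mu)=\lambda+O(1/\mu)$, hence $S(\bar g_\mu)\to\lambda$; the same formulas show that $\Ric(g_\mu)$ and $T$, measured in $g_\mu$, converge to tensors equal to $\Ric(g_F)$, resp.\ $T|_{K/H}$, on vertical vectors and to $0$ on the rescaled horizontal ones, so $|\Ric(g_\mu)-\lambda T|_{g_\mu}\to0$. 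Scale-invariance of $\Ric$ and $c_\mu\to1$ transfer everything to $\bar g_\mu$, and any $\mu_i\to\infty$ yields a divergent Palais--Smale sequence with $\lim S(\bar g_{\mu_i})=\lambda$.

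\emph{Forward direction.} Let $(g_i)=(\sum_\alpha x_\alpha^{(i)}Q|_{\fm_\alpha})\subset\M_T$ be divergent Palais--Smale with $S(g_i)\to\lambda$. Positivity of each summand of $\tr_{g_i}T=1$ forces $x_\alpha^{(i)}\ge t_\alpha d_\alpha$, so no scale collapses and divergence means $\max_\alpha x_\alpha^{(i)}\to\infty$. Pass to a subsequence on which every ratio $x_\alpha^{(i)}/x_\beta^{(i)}$ converges in $[0,\infty]$, and set $J_0=\{\alpha:(x_\alpha^{(i)})\ \text{bounded}\}$; then $x_\alpha^{(i)}\to x_\alpha^{\infty}\in(0,\infty)$ for $\alpha\in J_0$, $x_\alpha^{(i)}\to\infty$ otherwise, and $J_0$ is nonempty (else $\tr_{g_i}T\to0$) and proper (by divergence). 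Writing $\Scal(g)=\sum_\alpha d_\alpha b_\alpha/(2x_\alpha)-\tfrac14\sum_{\alpha,\beta,\gamma}[\alpha\beta\gamma]\,x_\alpha/(x_\beta x_\gamma)$, the first sum stays bounded along $(g_i)$, so boundedness of $\Scal(g_i)$ forces $[\alpha\beta\gamma]=0$ whenever exactly two of $\alpha,\beta,\gamma$ lie in $J_0$; since each $\fm_\alpha$ is $\Ad(H)$-invariant, this says precisely that $\fk:=\fh\oplus\bigoplus_{\alpha\in J_0}\fm_\alpha$ is a subalgebra, hence the Lie algebra of an intermediate subgroup $K$ with fibre $F=K/H$ (closedness of $K$ handled as usual). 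Then $g_i|_{K/H}=\sum_{\alpha\in J_0}x_\alpha^{(i)}Q|_{\fm_\alpha}\to g_F:=\sum_{\alpha\in J_0}x_\alpha^{\infty}Q|_{\fm_\alpha}$, and since the $G/K$-part of $\tr_{g_i}T$ tends to $0$, $\tr_{g_F}T|_{K/H}=1$. Finally the Palais--Smale condition gives $r_\alpha(g_i)x_\alpha^{(i)}-\lambda t_\alpha\to0$ for all $\alpha$; for $\alpha\in J_0$ one then checks, using the vanishing just established and the Palais--Smale conditions on the $G/K$-summands, that $r_\alpha(g_i)$ converges exactly to the $\fm_\alpha$-Ricci eigenvalue of $g_F$ regarded as a metric on $K/H$, so $\Ric(g_F)=\lambda\,T|_{K/H}$.

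\emph{Main obstacle.} The converse is essentially O'Neill bookkeeping once the canonical variation is in place. The work is in the forward direction: (i) justifying the asymptotically-diagonal reduction when fibre and base summands are $\Ad(H)$-equivalent and $g_i$ need not be block-diagonal; (ii) showing the analytic subgroup with Lie algebra $\fk$ is a genuine closed intermediate subgroup; and, above all, (iii) the final step identifying $\lim r_\alpha(g_i)$ with the intrinsic fibre eigenvalue, which must be squeezed out of \emph{both} Palais--Smale conditions together with a careful asymptotic analysis of the homogeneous Ricci formula as the scales separate. I expect (iii) to be the crux.
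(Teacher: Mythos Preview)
Your converse direction is correct and matches the paper's Proposition~\ref{PS_canonical} almost verbatim: the canonical variation combined with the O'Neill formulas does the job. Your extraction of the intermediate subalgebra in the forward direction is also correct and is essentially the content of Proposition~\ref{limit_subalgebra} (boundedness of $S$ kills exactly those structure constants $[\alpha\beta\gamma]$ with two indices in $J_0$ and one outside, which is the subalgebra condition).

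The genuine gap is precisely your item~(iii), and it is more serious than your sketch suggests. To identify $\lim_i r_\alpha(g_i)$ with the intrinsic fibre Ricci eigenvalue for $\alpha\in J_0$, you must control the terms $[\alpha kl]\,x_k^{(i)}/x_l^{(i)}$ with $k,l\in J_0^c$. These are ratios of two quantities going to infinity, and the fibre formula requires each such ratio to converge to~$1$ whenever $[\alpha kl]>0$. This is not a bookkeeping step: it is the content of the paper's Lemma~\ref{lem_PS_same_coord}, whose proof occupies most of the argument. One partitions $J_0^c$ into blocks $J_\alpha,J_\beta,\ldots$ according to connectivity via structure constants $[jpq]$ with $j\in J_0$, orders the blocks by the minimum of the $y$-coordinates, and runs an induction that simultaneously proves (1) the ratios converge to~$1$ within the relevant pairs and (2) the residual terms $[abq]_i\,y_{ai}y_{bi}/y_{qi}$ vanish in the limit when no $J_0$-index links $a$ or $b$ to~$q$. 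The Palais--Smale information on the $J_0^c$-summands is used here, but in the form of the summed identity~\eqref{lim_Ric2}, not componentwise; the induction exploits antisymmetry cancellations after summing over a block. Your phrase ``one then checks'' does not capture any of this, and without it the forward implication is incomplete. Two further points: the paper does \emph{not} reduce to a single fixed decomposition but lets $D_i\to D$ in the compact space $\mathcal D$, with structure constants varying continuously---this is how equivalent summands are handled, so your obstacle~(i) is bypassed rather than resolved; and you have omitted the off-diagonal Ricci components entirely, which the paper treats separately via Cauchy--Schwarz and Lemma~\ref{lem_PS_same_coord}. Finally, your displayed Palais--Smale consequence ``$r_\alpha(g_i)x_\alpha^{(i)}-\lambda t_\alpha\to0$'' is garbled: the condition $|\Ric(g_i)-\lambda T|_{g_i}\to0$ gives $(r_\alpha-\lambda t_\alpha)/x_\alpha^{(i)}\to0$, hence $r_\alpha\to\lambda t_\alpha$ for $\alpha\in J_0$, which is what you need.
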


In other words, divergent Palais--Smale sequences correspond to critical points of the scalar curvature functional on the space $\M_T(K/H)$ of metrics on the fiber. Notice also that $\lambda=S(g_F)$, and  we call $\lambda$ a \emph{critical level at infinity}.

If $\lambda=0$, then $(g_i)$ is called a $0$--Palais--Smale sequence. Since a Ricci-flat homogeneous space must be flat, \tref{main_1} implies the following result.

\begin{cor*}\label{main_0PS}
If $S_{|\M_T}$ admits a $0$--Palais--Smale sequence, then there exists an intermediate subgroup $K$ with $K/H$ a torus.
\end{cor*}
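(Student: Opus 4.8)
The plan is to read the statement off \tref{main_1}. By definition a $0$--Palais--Smale sequence is a \emph{divergent} Palais--Smale sequence $(g_i)\subset\M_T$ with $\lim_{i\to\infty}S(g_i)=\lambda=0$, so \tref{main_1} applies and produces an intermediate subgroup $K$, a subsequence $(g_{i_k})$, and a homogeneous metric $g_F=\lim_{k\to\infty}g_{i_k|K/H}$ on $K/H$ with $\Ric(g_F)=\lambda\,T_{|K/H}$ and $\tr_{g_F}T_{|K/H}=1$. First I would record that, since $\lambda=0$, the first equation becomes $\Ric(g_F)=0$, and that the normalization $\tr_{g_F}T_{|K/H}=1$ together with the positive definiteness of $T$ certifies that $g_F$ is an honest (nondegenerate) $K$--invariant Riemannian metric, so no degeneration of the limit has to be addressed.

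The second step is to invoke the classical fact, already flagged in the text preceding the corollary, that a Ricci--flat homogeneous Riemannian manifold is flat (Alekseevskii--Kimelfeld). Hence $(K/H,g_F)$ is a compact flat homogeneous space, and such a space is a flat torus: its universal cover is $\R^m$, and the deck group, being a crystallographic group normalized by a transitively acting isometry group, must consist of translations, so $K/H\cong T^m$. This is precisely the conclusion of the corollary, so the proof would end here.

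As for the main obstacle: there is essentially none at this level, because all the analytic and geometric work has been done in \tref{main_1}. The only point that deserves to be stated with care is the convention that "$0$--Palais--Smale sequence" incorporates the divergence hypothesis inherited from \tref{main_1}; this is what makes \tref{main_1} applicable and, at the same time, rules out the otherwise trivial convergent examples (for instance $G/H=\Sph^1$ with $H$ maximal, where $S_{|\M_T}$ is constant but no intermediate subgroup exists). Modulo that remark, the corollary is a one-line deduction from the theorem together with the Alekseevskii--Kimelfeld theorem.
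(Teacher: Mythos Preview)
Your proof is correct and follows the same approach as the paper: apply \tref{main_1} to obtain a Ricci-flat homogeneous metric on $K/H$, invoke Alekseevskii--Kimel'fel'd (cited as \cite[Theorem~7.61]{AB87}) to conclude flatness, and then use that a compact flat homogeneous space is a torus. The paper's own argument adds one small step you omit, namely passing to the closure $\bar K$ of $K$ before concluding that $\bar K/H$ is a torus; this is needed in the general case where $K$ may fail to be closed, but under the standing assumptions of \sref{prelim} (all intermediate subgroups connected and compact) your version suffices.
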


 Given an intermediate subgroup $K$ with Lie algebra~$\fk$, we define two invariants:
\begin{align*}
	\alpha_\fk=\sup\Big\{\frac{S(h)}{\tr_hT_{|K/H}}\,\Big|\, h\in \M_T(K/H)\Big\}\qquad\text{and}\qquad  \beta_\fk=\sup\Big\{\frac{S(h)}{\tr_hT_{|G/K}} \,\Big|\, h\in \M_T(G/K) \Big\}.
\end{align*}
We note that there may not exist a metric on $K/H$ or $G/K$ that realizes the supremum.  Nevertheless, as was observed in~\cite{PZ22},  these quantities control the behavior of  $S_{|\M_T}$. Indeed, $\alpha_\fk$ is  one of the critical levels at infinity, and $\beta_\fk-\alpha_\fk$ can be interpreted as the derivative at infinity for a canonical variation.

In~\cite{PZ22} we showed that  the functional $S_{|\M_T}$ realizes its supremum  if the ``derivative" at the highest level is positive. The next natural goal is to find conditions under which this functional has a saddle point. The main difficulty is that $S_{|\M_T}$ does not satisfy the Palais--Smale condition. Our strategy for finding saddle points is thus as follows. Let $(a,b)$ be an interval containing no critical levels at infinity. Theorem~\ref{main_1} shows that, on the pre-image $S^{-1}((a,b))$ intersected with~$\M_T$, one can try to use a standard mounting pass argument. For this one assumes that there exists a value $c\in(a,b)$ such that $S^{-1}((c,\infty))\cap\M_T$ has two connected components. Choosing a curve between these components,
one uses the gradient flow to ``push" this curve up. Since it cannot pass level~$c$, this curve ``remains hanging" at a critical point of co-index 0 or~1.

One of the major challenges is to find a curve between the connected components of $S^{-1}((c,\infty))\cap\M_T$ that lies in $S^{-1}((a,\infty))\cap\M_T$. There are two methods we use to overcome this challenge on two particular types of spaces. The first is to construct the curve by combining two canonical variations. We show that this indeed works for the so--called generalized Wallach spaces, a large class of homogeneous spaces studied frequently in the recent literature. Some typical examples are $U(p+q+r)/U(p)U(q)U(r)$ and their analogues for the orthogonal and symplectic groups, as well the Ledger--Obata spaces $H^4/\diag H$ with $H$ simple. For a classification of generalized Wallach spaces, see~\cite{Ni07}. Each such space admits precisely three intermediate subgroups $K_i$. We have the following result.

\begin{main}\label{main_2}
	Let $G/H$ be a generalized Wallach space with  three  intermediate subgroups $K_i$, $i=1,2,3$. If two of the quantities $\beta_{\fk_i}-\alpha_{\fk_i}$ are negative, then the functional $S_{|\M_T}$ has a critical point with co-index $0$ or~$1$.
\end{main}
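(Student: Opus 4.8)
The plan is to run a mountain pass argument over a region where, by \tref{main_1}, the Palais--Smale condition is available. First I would describe the geometry of $\M_T$. For a generalized Wallach space the isotropy representation splits $B$--orthogonally into three pairwise inequivalent irreducible summands $\fm_1\oplus\fm_2\oplus\fm_3$, a $G$--invariant metric is $g=(x_1,x_2,x_3)$ and $T=(\tau_1,\tau_2,\tau_3)$, and the substitution $u_i=d_i\tau_i/x_i$ identifies $\M_T$ with the open $2$--simplex $\{u_i>0,\ u_1+u_2+u_3=1\}$; the three vertices $V_i$ are the ends along which $K_i/H$ collapses, the three open edges the ends along which $K_i/H$ expands. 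From the explicit scalar curvature formula one sees that $S\to-\infty$ along the edges. Since $K_i/H$ and $G/K_i$ are isotropy irreducible, $\alpha_{\fk_i}$ and $\beta_{\fk_i}$ are genuine numbers and the canonical variation collapsing $K_i/H$ is a single curve $\gamma_i$, namely the cevian from $V_i$ to the opposite edge (the three $\gamma_i$ are in fact concurrent at the normal-type metric $x_1=x_2=x_3=\sum_j d_j\tau_j$); restricting the scalar curvature formula to $\gamma_i$ one finds that $S$ along it is a function of the form $(P_i+Q_iu-R_iu^2)/(a_i+b_iu)$ with $P_i,a_i,b_i>0$ and $R_i>0$ (positivity of $R_i$ amounts to the space not being locally a product, which the hypothesis forces), tending to $\alpha_{\fk_i}$ as $u\to0$ with derivative a positive multiple of $\beta_{\fk_i}-\alpha_{\fk_i}$. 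Finally, since $K_i/H$ is isotropy irreducible the prescribed Ricci equation on it is solvable only for $\lambda=\alpha_{\fk_i}$, so \tref{main_1} shows the critical levels at infinity of $S_{|\M_T}$ are exactly $\alpha_{\fk_1},\alpha_{\fk_2},\alpha_{\fk_3}$.

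Relabel so that $\beta_{\fk_1}-\alpha_{\fk_1}<0$ and $\beta_{\fk_2}-\alpha_{\fk_2}<0$. For $i=1,2$ the rational function above is then strictly decreasing on $(0,\infty)$ (its derivative has a numerator which is a downward parabola in $u$ with vertex at $u=-a_i/b_i<0$ and negative value at $u=0$), so $S<\alpha_{\fk_i}$ strictly along $\gamma_i\setminus\{V_i\}$. Combined with the fact that the limit of $S$ as $g\to V_i$ off $\gamma_i$ is a strictly concave function of the approach direction whose maximum $\alpha_{\fk_i}$ is attained only in the direction of $\gamma_i$, this produces a punctured neighbourhood $N_i$ of $V_i$ in $\M_T$ on which $S<\alpha_{\fk_i}$: the functional has a ``hump at infinity'' of height $\alpha_{\fk_i}$ over $V_i$ that is not attained. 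Since $S\to-\infty$ on the edges, after shrinking $N_i$ the supremum of $S$ over the (non-compact but ``$-\infty$ at the ends'') inner boundary of $N_i$ equals $\alpha_{\fk_i}-\delta_i$ for some $\delta_i>0$.

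Let $\Gamma$ be the set of continuous paths in $\M_T$ joining $N_1$ to $N_2$ and put $c_0=\sup_{\gamma\in\Gamma}\min_t S(\gamma(t))$. Every path in $\Gamma$ crosses the inner boundary of $N_1$ and of $N_2$, so $c_0<\min(\alpha_{\fk_1},\alpha_{\fk_2})$; hence for $c$ strictly between $c_0$ and $\min(\alpha_{\fk_1},\alpha_{\fk_2})$ the set $S^{-1}((c,\infty))\cap\M_T$ has at least two connected components, one meeting $N_1$ and one meeting $N_2$ (both non-empty since $\limsup_{g\to V_i}S=\alpha_{\fk_i}>c$). It remains to arrange that $c_0\notin\{\alpha_{\fk_1},\alpha_{\fk_2},\alpha_{\fk_3}\}$; as $c_0<\min(\alpha_{\fk_1},\alpha_{\fk_2})$, the only danger is $c_0=\alpha_{\fk_3}$, and only when $\alpha_{\fk_3}<\min(\alpha_{\fk_1},\alpha_{\fk_2})$. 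If $\beta_{\fk_3}-\alpha_{\fk_3}\le0$, then $V_3$ also carries a hump and the same moat argument gives $c_0<\alpha_{\fk_3}$. If $\beta_{\fk_3}-\alpha_{\fk_3}>0$, then $S$ exceeds $\alpha_{\fk_3}$ just inside $V_3$ along $\gamma_3$, and I would build a path in $\Gamma$ running from $N_1$ toward $V_3$ along a reparametrization of $\gamma_1$, over the $V_3$ pass along a short piece of $\gamma_3$, and down to $N_2$ along $\gamma_2$, arranged so that $S>\alpha_{\fk_3}$ throughout; this forces $c_0>\alpha_{\fk_3}$. Producing such a path — estimating $S$ along a concatenation of the three canonical variations and checking that it clears height $\alpha_{\fk_3}$ on the pass over $V_3$ — is the technical heart of the proof and the one place the explicit generalized Wallach scalar curvature formula is indispensable; note it is $\alpha_{\fk_3}$, not $\beta_{\fk_3}$, that must be cleared, and $\beta_{\fk_3}-\alpha_{\fk_3}>0$ is exactly the slack that makes this possible.

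Finally I would conclude in the standard way. Fix an open interval $(a,b)\ni c_0$ with $(a,b)\cap\{\alpha_{\fk_1},\alpha_{\fk_2},\alpha_{\fk_3}\}=\varnothing$, which exists by the previous step. By \tref{main_1}, $S_{|\M_T}$ satisfies the Palais--Smale condition at every level in $(a,b)$, so the gradient flow of $S_{|\M_T}$ deforms superlevel sets within $S^{-1}((a,b))$ in the usual way; if $c_0$ were a regular value one could deform a near-optimal path so that its minimum rises above $c_0$, contradicting the definition of $c_0$. Hence $c_0$ is a critical value of $S_{|\M_T}$, and the one-parameter min--max characterization yields a critical point at level $c_0$ of co-index $0$ or $1$. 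The main obstacle, as indicated, is the curvature estimate of the third paragraph.
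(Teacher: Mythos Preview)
Your strategy coincides with the paper's, but the paper streamlines it in two ways that dissolve what you flag as the ``technical heart.'' First, the hypotheses already fix the ordering: with $\beta_{\fk_1}-\alpha_{\fk_1}<0$, $\beta_{\fk_2}-\alpha_{\fk_2}<0$ and (say) $\alpha_{\fk_1}\ge\alpha_{\fk_2}$, one checks from~\eqref{alpha_beta_Wallach} that necessarily $\alpha_{\fk_3}<\alpha_{\fk_2}$ and $\beta_{\fk_3}-\alpha_{\fk_3}>0$, so your case split on the sign of $\beta_{\fk_3}-\alpha_{\fk_3}$ is superfluous. Second, the path need only be the concatenation $\gamma_1\cup\gamma_2$ through the common center $y_0=(w,w,w)$, with no detour near~$V_3$: by~\eqref{scal_g} each $\gamma_i$ ($i=1,2$) is strictly monotone when $\beta_{\fk_i}-\alpha_{\fk_i}<0$, so the minimum of $S$ along the concatenated path is $S(y_0)$, and a one-line computation gives $S(y_0)>\alpha_{\fk_3}$. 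Thus $c_0\in(\alpha_{\fk_3},\alpha_{\fk_2})$ is immediate, and the three-variation ``pass over $V_3$'' you propose is not needed.

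What your sketch underestimates is the moat estimate in your second paragraph. The directional-limit observation (``the limit of $S$ as $g\to V_i$ is a strictly concave function of the approach direction'') is correct but does not by itself yield $\sup_{\partial N_i}S\le\alpha_{\fk_i}-\delta_i$: the deficit $\alpha_{\fk_i}-S$ degenerates as the direction approaches that of~$\gamma_i$, so one must balance the angular term $y_j/y_k+y_k/y_j-2$ against the linear drift coming from $\beta_{\fk_i}-\alpha_{\fk_i}<0$. That balance is exactly \pref{neg_der}, which the paper invokes---and only at the single vertex $V_2$, not at both---to conclude $c_0\le\alpha_{\fk_2}-\epsilon$. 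This annulus estimate, rather than the path construction, is where the genuine work lies.
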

 
The conditions of the theorem imply that there exists an ordering such that $\alpha_{\fk_1}<\alpha_{\fk_2}\le\alpha_{\fk_3}$,
$\beta_{\fk_2}-\alpha_{\fk_2}<0$ and $\beta_{\fk_3}-\alpha_{\fk_3}<0$. 
If we let $a=\alpha_{\fk_1}$ and $b=\alpha_{\fk_2}$, then the scalar curvature functional satisfies the Palais--Smale condition on $S^{-1}((a,b))\cap\M_T$ by \tref{main_1}.
 We then prove that there exists $\e>0$ such that $\{g\in\M_T(G/H)\mid S(g)>b-\e\}$ has two components. This involves estimating $S_{|\M_T}$ near infinity at level $b$ and exploiting the assumption that the ``derivative" is negative.  A computation shows that the composition of two canonical variations connects these components and lies in  $S^{-1}((a,b))$.

The second method we use is to choose $a=-\infty$ and $b=\alpha_\fk$ for a subgroup $K$ such that~$\alpha_\fk$ has the lowest possible value. If there exists a second intermediate subgroup $K'$, we show that $S^{-1}((c,\infty))\cap\M_T$ has two connected components for some $c<\alpha_\fk$ provided $\beta_{\fk}-\alpha_{\fk}<0$. Moreover, we can  find  a curve connecting these components. However, we encounter here another problem: for some intermediate subgroup~$K''$, the scalar curvature functional on $\M_T(K''/H)$ may have critical points with scalar curvature below~$\alpha_\fk$. According to \tref{main_1}, these critical points give rise to divergent Palais--Smale sequences. While the invariants $\alpha_{\fk}$ are easily computable in many situations, there is no general theory for finding the scalar curvature at the other critical levels at infinity. If the space $K/H$ is simple enough  (e.g., it has at most two irreducible summands in its isotropy representation), then the functional on the fiber has only one critical point, allowing us to overcome this problem. We illustrate this in the case of generalized flag manifolds with three or four isotropy summands, classified in~\cite{Ki90,AC10}. Recall that  $G/H$ is a generalized flag manifold if $H$ is the centralizer of a torus in~$G$. On homogeneous spaces with one summand, the prescribed Ricci curvature problem is trivial, and on those with two summands, it was resolved in~\cite{AP16}. We will show:

\begin{main}\label{main_3}
Let $G/H$ be a generalized flag manifold with three or four isotropy summands and $\fk$ an intermediate subalgebra such that $\alpha_\fk<\alpha_{\fk'}$ for every other intermediate subalgebra~$\fk'$. If $\beta_\fk-\alpha_\fk<0$, then the functional $S_{|\M_T}$ has a critical point of co-index 0 or~1.
\end{main}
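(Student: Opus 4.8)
The plan is to apply the mountain pass strategy outlined in the introduction, with $a=-\infty$ and $b=\alpha_\fk$. The first step is to verify that the scalar curvature functional satisfies the Palais--Smale condition on $S^{-1}((-\infty,\alpha_\fk))\cap\M_T$. By \tref{main_1}, a divergent Palais--Smale sequence at level $\mu<\alpha_\fk$ would produce an intermediate subgroup $K''$ with a metric $g_F$ on $K''/H$ satisfying $\Ric(g_F)=\mu\,T_{|K''/H}$ and $\tr_{g_F}T_{|K''/H}=1$, so $S(g_F)=\mu$. Thus the critical levels at infinity below $\alpha_\fk$ are precisely the scalar curvatures of critical points of $S_{|\M_T(K''/H)}$ for the various intermediate subgroups~$K''$. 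Here the hypothesis that $G/H$ has three or four isotropy summands, combined with the classification in~\cite{Ki90,AC10}, forces each proper fiber $K''/H$ to have at most two irreducible isotropy summands. On spaces with one summand the problem is trivial and on those with two it was resolved in~\cite{AP16}: in either case $S_{|\M_T(K''/H)}$ has exactly one critical point, which is the global maximum, so its scalar curvature equals $\alpha_{\fk''}\ge\alpha_\fk$. Hence there are no critical levels at infinity strictly below $\alpha_\fk$, and the Palais--Smale condition holds on $S^{-1}((-\infty,\alpha_\fk))\cap\M_T$.

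The second step is to show that $S^{-1}((c,\infty))\cap\M_T$ has (at least) two connected components for some $c<\alpha_\fk$. Since $\alpha_\fk$ is the smallest of the invariants, $\alpha_\fk$ is realized by a metric $\bar g$ on $K/H$ (using \cite{PZ22}, whose existence theorem applies because $K/H$ is itself a generalized flag manifold with at most two summands, or more directly because the derivative condition at the top level is automatic when $\alpha_\fk$ is minimal), which extends to a metric on $G/H$ whose canonical variation $g_t$ diverges with $S(g_t)\to\alpha_\fk$ from below and $\beta_\fk-\alpha_\fk<0$ controlling the sign of the derivative at infinity. Because this derivative is negative, the canonical variation sits below level $\alpha_\fk$ near infinity, so pushing $c$ slightly below $\alpha_\fk$, one end of $\M_T(G/H)$ ``near $K$ at infinity'' lies in $S^{-1}((c,\infty))$. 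The existence of a second intermediate subgroup $K'$ (guaranteed since a generalized flag manifold with three or four summands always has at least two intermediate subgroups) provides a second, disjoint region of $S^{-1}((c,\infty))\cap\M_T$: metrics that are large on the $K'$-directions are far, in the geometry of $\M_T$, from metrics that are large on the $K$-directions, and one checks via the scalar curvature estimates near infinity of \cite{PZ22} that no path staying above level $c$ connects them. This is the step I expect to be the main obstacle: one must pin down exactly which ``collapsing directions'' keep $S$ above $c$ and argue a separation result, which requires the detailed curvature formulas for flag manifolds with three or four summands rather than a soft argument.

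The third step is to produce a continuous path in $S^{-1}((-\infty,\alpha_\fk))\cap\M_T$ joining the two components of $S^{-1}((c,\infty))\cap\M_T$. Concretely one takes a path that first shrinks the $K$-fiber directions, passing through metrics of low scalar curvature, then re-expands in the $K'$-directions; since the intermediate portion only needs to stay in $S^{-1}((-\infty,\alpha_\fk))$, and $S$ is bounded above by $\alpha$-type quantities that are automatically at most $\alpha_\fk$ on such degenerate metrics (or can be pushed below by further collapsing), this path lies entirely below level $\alpha_\fk$. Combining this path with the two ``arms'' reaching into the distinct components gives a curve $\gamma\colon[0,1]\to\M_T$ with endpoints in the two components and $\max_t S(\gamma(t))<\alpha_\fk$ in the connecting part.

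The final step is the mountain pass argument itself. Let $c_0=\inf_\gamma\max_{t}S(\gamma(t))$ over all curves $\gamma$ of the type constructed, so $c<c_0\le\alpha_\fk$. Actually, since $S$ is being maximized, we run the argument upside down: applying the negative gradient flow of $-S_{|\M_T}$ (equivalently the gradient flow of $S_{|\M_T}$) to the connecting curve, it cannot be pushed above level $c$ on both ends because those ends are separated, so it remains ``hanging'' — and since the Palais--Smale condition holds on $S^{-1}((-\infty,\alpha_\fk))\cap\M_T$ by Step~1, the min-max value $c_0$ is achieved at a genuine critical point of $S_{|\M_T}$, necessarily of co-index $0$ or $1$ by the standard deformation lemma. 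This produces the desired critical point and completes the proof. The only place real care is needed beyond Step~2 is checking that $c_0<\alpha_\fk$ strictly, so that the critical point lies in the region where Palais--Smale holds; this follows because the connecting part of every competitor curve can be kept uniformly below $\alpha_\fk$, while the two arms are monotone, so the max over the whole curve is attained in the connecting part.
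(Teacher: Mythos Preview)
Your Step~1 (Palais--Smale below $\alpha_\fk$) is correct and matches the paper's argument exactly: every intermediate $K''/H$ has at most two isotropy summands, so $S_{|\M_T(K''/H)}$ has at most one critical point, necessarily a global maximum at level $\alpha_{\fk''}\ge\alpha_\fk$.

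The gap is in Steps~2--4. You correctly flag Step~2 as the main obstacle, but your proposed mechanism (``metrics large on the $K'$-directions are far from metrics large on the $K$-directions'') is too vague to yield a separation, and your Step~3 (building a specific connecting path that stays below $\alpha_\fk$) is both unnecessary and harder than what is actually needed. The paper's device is a quantitative \emph{collar estimate} (\lref{neg_der_2}, a variant of \pref{neg_der}): from $\beta_\fk-\alpha_\fk<0$ one proves that there exist $\delta_-<\delta_+$ and $\epsilon>0$ with
\[
S(y)\le\alpha_\fk-\epsilon\qquad\text{for all }y\in T_{\delta_+}(\Delta_\fk)\setminus T_{\delta_-}(\Delta_\fk).
\]
This single estimate does all the work. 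It forces $\{S>\alpha_\fk-\epsilon\}$ to split into a component inside $T_{\delta_-}(\Delta_\fk)$ and one in $\Delta\setminus T_{\delta_+}(\Delta_\fk)$; any curve $\gamma$ from the vertex $\Delta_\fk$ to another subalgebra vertex $\Delta_\fl$ must cross the collar; and since the gradient flow of $S$ keeps the two ends in their respective components, every flowed curve $\gamma_s$ still crosses the collar, giving $\sup_s\inf_t S(\gamma_s(t))\le\alpha_\fk-\epsilon$. Thus the strict inequality $c_0<\alpha_\fk$ that you worry about at the end is not obtained by engineering a clever curve below $\alpha_\fk$, but is forced on \emph{every} curve by the collar. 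Without this estimate your outline does not close: you have no separation, no uniform upper bound on the min-max level, and hence no way to place the critical point in the region where Palais--Smale holds.
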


The condition that the generalized flag manifold have at most four isotropy summands guarantees that the scalar curvature functional on $\M_T(K'/H)$ has no critical values below~$\alpha_\fk$. 
\smallskip

We illustrate our results in several examples.

\begin{itemize}
	\item
	For the Wallach space $SU(3)/\mathbb T^2$ we draw the region where $S_{|\M_T}$ has a global maximum or a saddle point in Figure \ref{Wallach_regions}. The graphs of $S_{|\M_T}$ for some typical choices of $T$ are shown in Figure~\ref{Wallach_critical}. A computer experiment with two million data points indicates that Theorem~\ref{main_2} is optimal in this example.
	\item
We examine the bifurcations of critical points of $S_{|\M_T}$ on $G_2/U(2)$ as $T$ varies in the space of positive-definite bi-linear forms. One can start with $T$ where the functional has a unique critical point, namely a global maximum, and move to $T$ where,  in addition to the  global maximum, a saddle point appears. As one continues, the global maximum changes to a local maximum, with the supremum of $S_{|\M_T}$ not achieved, and the saddle remains. Finally, these two critical points merge into one degenerate saddle point, which then disappears. We depict graphs illustrating these bifurcations  in \fref{fig_G2_cr_pts}. 
\item  We also discuss the image of the Ricci map. We will show that it is the union of finitely many smooth immersed hypersurfaces meeting at the image of the semi-algebraic set in $\M$ of degenerate critical points; cf.~\pref{Morse} and see Figures~\ref{Wallach-Ric-image} and~\ref{fig_sing_curves} for two examples.
\item
For a generalized flag manifold with four isotropy summands, we show the regions where Theorem~\ref{main_3} applies and observe how they change as different subgroups realize the minimum among the values of~$\alpha_\fk$.
\end{itemize}

We will also show that there exists a  subset $A$ of full measure in the space of homogeneous metrics such that $S_{|\M_T}$ is a Morse function for any $T\in A$; see \pref{Morse}.
\smallskip

It is interesting to compare the behavior of $S_{|\M_T}$ to that of $S_{|\M_1}$, where $\M_1$ is the set of homogeneous metrics of volume 1. The critical points of the latter functional are homogeneous Einstein metrics, and they have been studied extensively. There are similarities between $S_{|\M_T}$ and $S_{|\M_1}$ but also important differences. Unlike $S_{|\M_T}$, the functional $S_{|\M_1}$ is bounded above only if $H$ is maximal in $G$. When this is not the case, the behavior of $S_{|\M_1}$ at infinity is again controlled by the collection of intermediate subgroups. A graph theorem, as well as the topology of the simplicial complex of intermediate subgroups, guarantee the existence of a critical point of co-index 0 or 1 in a large class of examples; see~\cite{BWZ04,CB04}. Furthermore, unlike $S_{|\M_T}$, the functional $S_{|\M_1}$ satisfies the Palais--Smale condition on the set of metrics with positive scalar curvature. As a consequence, the set of critical points has only finitely many components, each of them compact. On the other hand, critical submanifolds of $S_{|\M_T}$ (if they exist) are typically non-compact. It is natural to ask whether one can obtain an analogue of the graph theorem in~\cite{BWZ04} in the context of the prescribed Ricci curvature problem on general homogeneous spaces. The presence of divergent Palais--Smale sequences is an obvious obstacle. Nevertheless, as Theorems~\ref{main_2} and~\ref{main_3} show, this obstacle can be circumvented in some situations.

Many of the difficulties in proving our results stem from the fact that $S_{|\M_T}$ does not admit a continuous extension to the ``boundary" of~$\M_T$ and thus $\beta_\fk-\alpha_\fk$ is not an actual derivative. Because of this, we need to carry out careful estimates for $S_{|\M_T}$ at infinity. The above strategies for producing saddle points clearly can be effective on other types of homogeneous spaces, not just on generalised Wallach spaces or generalised flag manifolds. We  chose these two large classes of spaces for illustration.

The paper is organized as follows. In \sref{prelim} we recall properties of homogeneous spaces and some of the results from~\cite{PZ22} that we will need for our proofs. In particular, we discuss the simplicial complex that compactifies the set of metrics~$\M_T$, the stratification of its boundary, and the behavior of the scalar curvature near the strata. In \sref{sec_PS} we classify divergent Palais--Smale sequences and prove \tref{main_1}. In \sref{Wallach_saddle} we discuss generalized Wallach spaces and in \sref{sec_graph} generalized flag manifolds. Finally, \sref{sec_Examples} focuses on examples.
 
\section{Preliminaries}\label{prelim}

In this section, we briefly summarize the background we need. For a more detailed exposition, see~\cite{PZ22}.  
We  assume that $G$ and $H$ are compact  and that $G/H$ is not covered by a torus (otherwise all the $G$-invariant metrics are flat). We also assume for simplicity that 
$G$ and $H$ are connected, and that all the intermediate subgroups are connected and compact. Our results still hold without connectedness and without the assumptions on the intermediate subgroups; see~\cite[Section~1]{PZ22} for an explanation.

Let~$\fh\subset\fg$ be the Lie algebras of $H\subset G$. Assume that $G/H$ is an almost effective homogeneous space. We fix a bi-invariant metric $Q$ on~$\fg$. It defines a $Q$-orthogonal $\Ad_H$-invariant splitting $\fg=\fh\oplus\fm$ with $\fm=\fh^\perp$. The tangent space $T_{eH} (G/H)$ is identified with $\fm$, and $H$ acts on $\fm$ via the adjoint representation $\Ad_H$.  A $G$-invariant metric on $G/H$ is determined by an $\Ad_H$-invariant inner product on~$\fm$. We denote by $\M(G/H)$, or sometimes simply by~$\M$, the space of $G$-invariant metrics on $G/H$.

Consider a $Q$-orthogonal decomposition
\begin{equation*}
\fm=\fm_1\oplus\ldots\oplus\fm_r,
\end{equation*}
where $\Ad_H$ acts irreducibly on $\fm_i$. Some of the summands $\fm_i$ may need to  be one-dimensional if there exists a subspace of $\fm$ on which $\Ad_H$ acts as the identity. We denote by $\DD$ the space of all such decompositions and use the letter $D\in\DD$ for a particular choice of decomposition. The space $\DD$ has a natural topology induced from the embedding into the product of Grassmannians $G_k(\fg)$ of $k$-dimensional subspaces of~$\fg$. Clearly, $\DD$ is compact.

If $T$ is a $G$-invariant symmetric bi-linear form field on~$G/H$, it is determined by its value on~$\fm$. We are interested when such a bi-linear form field is (up to scaling) the Ricci curvature of a metric~$g\in\M$, i.e., when
\begin{equation}\label{prescribe}
\Ric(g)=cT \qquad \text{for some constant $c$.}
\end{equation}
Throughout the paper we will assume that $T$ is positive definite. We may also assume that $c>0$ in~\eqref{prescribe} since a compact homogeneous space does not admit any metrics with $\Ric\le 0$ unless it is flat (see~\cite[Theorem~1.84]{AB87}), which we excluded above.

Define the hypersurface
\begin{equation*}
\M_T(G/H)=\{g\in\M\mid \tr_gT=1\}\subset\M,
\end{equation*}
where $\tr_g$ is the trace with respect to $g$. We denote it simply by $\M_T$ when the homogeneous space is clear from context. As shown in~\cite{AP16}, a solution to~\eqref{prescribe} can we viewed as a critical point of the functional
\begin{equation*}
S\colon \M_T \to \R,
\end{equation*}
where $S(g)$ is the scalar curvature of $g$. 

We now recall the formulas for the scalar curvature and the Ricci curvature of a homogeneous metric. Given $g\in\M$, we have  $g_{|\fm_i}=x_i\, Q_{|\fm_i}$ for some constant $x_i>0$. In general, $\fm_i$ and $\fm_j$ do not have to be $g$-orthogonal if some of these summands are equivalent. However, we can diagonalize $g$ and $Q$ simultaneously, and hence there exists a decomposition $D\in\DD$ such that the metric has the form 
\begin{equation}\label{diagonalmetric}
g=x_1Q_{|\fm_1}+ x_2Q_{|\fm_2} +\cdots+  x_r Q_{|\fm_r}.
\end{equation}
We call such metrics diagonal with respect to our choice of $D$  
and denote their set by $\M^D(G/H)$, or simply $\M^D$. Thus, $\M=\cup_{D\in\DD}\M^D$. We also denote $\M_T^D=\M_T\cap\M^D$. The scalars $x_i$ are simply the eigenvalues of $g$ with respect to~$Q$.

We define the structure constants
\begin{equation*}
[ijk]=[ijk]^D=\sum_{\alpha,\beta,\gamma} Q([e_\alpha,e_\beta],e_\gamma)^2,\qquad i,j,k=1,\cdots,r,
\end{equation*}
where $(e_\alpha)$, $(e_\beta)$ and $(e_\gamma)$ are $Q$-orthonormal bases of $\fm_i$, $\fm_j$ and~$\fm_k$. Clearly, $[ijk]\ge 0$, and $[ijk]=0$ if and only if $Q([\fm_i,\fm_j],\fm_k)=0$. We will denote by $B$  the Killing form of~$G$. By the irreducibility of~$\fm_i$, there exist constants $b_i=b_i^D\ge0$ such that 
\begin{equation*}
B_{|\fm_i}=-b_i Q_{|\fm_i} 
\end{equation*}
with $b_i=0$ if and only if $\fm_i$ lies in the center of $\fg$. Furthermore,  not all of $b_i$ vanish since otherwise $\fm$ is in the center $\fz(\fg)$ and $G/H$ is flat. Clearly, the numbers $b_i$ and the structure constants $[ijk]$ depend continuously on~$D$.

Using the notation $d_i=\dim\fm_i$, the scalar curvature is given by
\begin{align}\label{scalcurvx}
S(g)&=\frac12\sum_{i}\frac{d_ib_i}{x_i}-\frac14\sum_{i,j,k}[ijk]\frac{x_k}{x_ix_j}
\end{align}
(see~\cite{MWWZ86}), and the Ricci curvature satisfies
\begin{align}\label{RiccDiag}
\Ric(g)_{|\fm_i}&=\bigg(\frac{b_{i}}2
-\frac1{2d_i} \sum_{j,k} [ijk]\frac{x_{k}}{x_{j}}
+\frac1{4d_i}\sum_{j,k}[ijk]\frac{ x_{i}^2}{x_{j}x_{k}  }\bigg)Q_{|\fm_i}.
\end{align}
For a diagonal metric, the Ricci curvature is not necessarily diagonal, and the off-diagonal terms are given by
\begin{align}\label{RiccOffDiag}
\Ric(g)(u,v)&= \sum_{k,l}\Big(\frac{x_ix_j-2 x_k^2+2x_kx_l}{4x_kx_l}\Big)
\sum_{e_\alpha\in\fm_k}Q\big([u,e_\alpha]_{\fm_l},[v,e_\alpha]_{\fm_l}\big),\qquad u\in \fm_i,~v\in \fm_j,
\end{align}
where $i\ne j$ and the subscript $\fm_l$ denotes projection onto $\fm_l$; see \cite{GZ02}.

For the tensor $T$, we introduce the constants $T_i=T_i^D$ such that
\begin{equation*}
T_{|\fm_i}=T_i\, Q_{|\fm_i}.
\end{equation*}
Varying over all decompositions, these constants determine~$T$ uniquely.

It will be convenient for us to describe a homogeneous metric in terms of its inverse since this makes the space of metrics subject to our constraint pre-compact. We will use the following parametrization of~$\M$. Given $y=(y_1,\ldots,y_r)\in\mathbb R_+^r$ and a decomposition $D\in\mathcal D$ with irreducible modules $\mathfrak m_1,\ldots,\mathfrak m_r$, consider the metric
\begin{align}\label{metr_terms_inv}
g=\sum_i\tfrac1{y_i}Q_{|\mathfrak m_i}.
\end{align}
In what follows, we identify $g$ with $(y,D)=((y_1,\ldots,y_r),D)$. Notice that  $g$ may be diagonal with respect to multiple decompositions. Thus, the identification of $g$ with $(y,D)$ is not one-to-one.

If $g\in\M_T^D$ is given by~\eqref{metr_terms_inv}, we obtain the following formulas for the scalar curvature and our constraint:
\begin{equation}\label{scalar_y}
S(g)=\frac12\sum_{i}d_ib_iy_i-\frac14\sum_{i,j,k}\frac{y_iy_j}{y_k}[ijk],
\qquad \tr_gT=\sum_{i}d_iT_iy_i=1.
\end{equation}
We need to study the behavior of $S_{|\M_T^D}$ at infinity, which means that at least one of the variables $y_i$ goes to $0$. It is natural to introduce a simplicial complex and its stratification. Specifically, let 
\begin{equation*}
\Delta=\Delta^D=\big\{(y_1,\cdots, y_r)\in \R^r\,\big|\, \textstyle\sum_{i} d_iT_iy_i=1 \text{ and } y_i> 0\big\},
\end{equation*}
 which is a natural parametrization of the set~$\M_T^D$.   We identify a metric $g\in\M_T^D$ with $y=(y_1,\cdots, y_r)\in\Delta$ when the choice of the decomposition $D$ is understood from the context. We emphasize that the numbers $x_i$, $T_i$, and  the simplex~$\Delta$, depend on the choice of~$D$.

The boundary of $\Delta$ consists of lower-dimensional simplices. For every nonempty proper subset $J$ of the index set $I=\{1,\ldots,r\}$, let $J^c=I\setminus J$ and
\begin{equation*}
\Delta_J=\{y\in\partial\Delta\mid y_i>0 \text{ for } i\in J,\ y_i=0 \text{  for } i\in J^c\}.
\end{equation*}
Thus $\Delta_J$ is a $|J|$-dimensional simplex, which we call a \emph{stratum} of~$\partial\Delta$.  
The closure of $\Delta_J$ satisfies
\begin{equation*}
\bar{\Delta}_J=\bigcup_{J'\subset J}\Delta_{J'},
\end{equation*}
and we call $\Delta_{J'}$ a stratum adjacent to $\Delta_{J}$ if $J'$ is a nonempty proper subset of~$J$. It will also be useful for us to consider tubular $\e$-neighborhoods of strata for $\e>0$:
\begin{equation*}
T_\e(\Delta_J)=\{y\in\Delta \mid y_i\le \e \text{ for }  i\in J^c\}.
\end{equation*}
Finally, we associate to each stratum an $\Ad_H$-invariant subspace of $\fm$:
\begin{equation*}
\fm_J=\bigoplus_{i\in J}\fm_i.
\end{equation*}
We can add markings to the strata in $\partial\Delta$. If $\fh\oplus\fm_J=\fk$ is a subalgebra, we denote the stratum $\Delta_J$ by $(\Delta_J, \fk)$ or simply $\Delta_\fk$; if it is not, we denote the stratum by~$(\Delta_J,\infty)$.

 We will also use the formula for a Riemannian submersion with totally geodesic fibers. If $g_F$ is a metric on the fiber $F$ and $g_B$ a metric on the base $B$, consider the two-parameter family of metrics
\begin{equation*}
	g_{s,t}=\frac1s \,g_F + \frac1t\,g_B.
\end{equation*}
We focus on the case where $g_B$ and $g_F$ are homogeneous metrics on $B=G/K$ and $F=K/H$ for an intermediate subgroup~$K$. As follows from ~\cite[Proposition~9.70]{AB87}, the Ricci curvature of $g_{s,t}$ is given by
\begin{align}\label{SubRicci}
	\Ric_{s,t}(u,v)=
	\begin{cases}
		\Ric_F(u,v)+\frac{t^2}{s^2}\ml Au,Av\mr_g & \text{ for } u,v\in \fk\cap\fm, \\  
		\Ric_B(u,v)-2\frac{t}{s}\ml A_u,A_v\mr_g & \text{ for } u,v\in \fk^\perp, \\
		\frac{t}{s}\Div A\,(u,v) & \text{ for } u\in \fk\cap\fm,~v\in\fk^\perp,
	\end{cases}
\end{align}
where $A$ is the O'Neil tensor of the fibration, and the scalar curvature is 
\begin{align}\label{SubScal}
	S(g_{s,t})= s S_F + t S_B-\frac{t^2}{s}|A|_g
\end{align}
with $S_F$ and $S_B$ the scalar curvatures of $g_F$ and~$g_B$.  Notice  that, if $K$ is not closed (and hence $K/H$ may not be Hausdorff), formulas \eqref{SubRicci} and~\eqref{SubScal} still hold on the level of Lie algebras since they are local in nature.  Solving  the constraint $\tr_{g_{s,t}}T=1$ for~$s$, we obtain a family $(g_t)\subset S_{|\M_T} $. The scalar curvatures of $g_t$ is
\begin{equation}\label{scal_g}
	S(g_t)= \frac{S_F}{T_1^*}+T_2^*\Big(\frac{S_B}{T_2^*}-\frac{S_F}{T_1^*}\Big)t-\frac{t^2T_1^*}{1-tT_2^*}|A|_g,
\end{equation}
where $T_1^*=\tr_{g_F}T_{|F}$ and $T_2^*=\tr_{g_B}T_{|B}$ are the traces of $T$ on the fiber and the base. As a consequence,
\begin{equation}\label{derivative}
	\lim_{t\to0}S(g_t)=\frac{S_F}{T_1^*} \qquad\text{and}\qquad
	\lim_{t\to0}\frac{dS(g_t)}{d t}=T_2^*\Big(\frac{S_B}{T_2^*}-\frac{S_F}{T_1^*}\Big).
\end{equation}
The family $(g_t)$ is called the \emph{canonical variation} of~$g=g_F+g_B$.

Motivated by \eqref{derivative},  we define the following invariants for every intermediate subgroup $K$ with Lie algebra~$\fk$:
\begin{align*}
	\alpha_\fk&=\sup \{S(g)\mid g\in \M(K/H)\text{ with }\tr_gT_{|\fk\cap\fm}=1\}, \\
 \beta_\fk&=\sup \{S(g)\mid g\in \M(G/K)\text{ with }\tr_gT_{|\fk^\perp}=1\}.
\end{align*}
It is important to note that the suprema in these definitions are not always achieved. However, there exist an intermediate subgroup $K'\subset K$ (with Lie algebra $\fk'$) and a metric $g\in \M(K'/H)$ such that $\alpha_\fk=\alpha_{\fk'}$ and $S(g)=\alpha_\fk'$.
We also recall that
\begin{equation}\label{inclusion}
\text{ if }\quad \fk_1\subset\fk_2, \quad \text{then}\quad \alpha_{\fk_1} \le \alpha_{\fk_2}.
\end{equation}
For the proofs of these properties, see~\cite{PZ22}. Furthermore, the following result is an immediate consequence of~\cite[Propositions~4.1 and~4.2]{PZ22}. 

\begin{prop}\label{prop_max_est}\label{limit_subalgebra}
Given $D\in\mathcal D$ and $\e>0$, if $y$ lies in a stratum $(\Delta_J^D,\infty)$, then there exists a neighborhood $U$ of $(y,D)$ in $\mathbb R^r\times \mathcal D$ such that $S(g)<-\e$ for every metric $g=(y',D')$ contained in~$U$. If $y$ is in a stratum $\Delta_\fk^D$, then there is a neighborhood $V$ of $(y,D)$ in $\mathbb R^r\times \mathcal D$ such that $S(g)<\alpha_\fk+\e$ for every $g=(y',D')$ in~$V$.
\end{prop}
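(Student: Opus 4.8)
The plan is to reduce both statements to a compactness-and-continuity argument using the explicit scalar curvature formula \eqref{scalar_y} together with the continuity of the structure constants $[ijk]^D$ and the Killing constants $b_i^D$ in the decomposition $D$. First I would recall that the scalar curvature, expressed in the inverse variables $y_i$, is
\begin{equation*}
S(g)=\frac12\sum_{i}d_ib_iy_i-\frac14\sum_{i,j,k}\frac{y_iy_j}{y_k}[ijk],
\end{equation*}
subject to $\sum_i d_iT_iy_i=1$. The key structural observation is that as $y_i\to 0$ for $i\in J^c$, each term $\frac{y_iy_j}{y_k}[ijk]$ with $k\in J^c$ blows up \emph{unless} the numerator vanishes to at least the same order; in particular, a term $\frac{y_iy_j}{y_k}$ with $i,j\in J$ and $k\in J^c$ diverges to $+\infty$ (so $S\to-\infty$) precisely when $[ijk]\neq 0$, i.e.\ when $Q([\fm_i,\fm_j],\fm_k)\neq 0$. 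The condition that $\fh\oplus\fm_J$ is \emph{not} a subalgebra is exactly the statement that $[\fm_J,\fm_J]$ has nontrivial projection onto some $\fm_k$ with $k\in J^c$, i.e.\ some such $[ijk]>0$. This is the heart of the dichotomy, and it is where I would expect the one subtlety: I must check that no cancellation occurs among the divergent terms, which follows because all the coefficients $[ijk]$ are nonnegative, so the divergent contributions in $-\frac14\sum \frac{y_iy_j}{y_k}[ijk]$ all have the same sign.

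For the first assertion, let $y$ lie in a stratum $(\Delta_J^D,\infty)$. Since $\fh\oplus\fm_J^D$ is not a subalgebra, there exist $i,j\in J$ and $k\in J^c$ with $[ijk]^D>0$. I would invoke \cite[Proposition~4.1]{PZ22} essentially as a black box for the precise estimate, but sketch the mechanism: on a small neighborhood $U$ of $(y,D)$ in $\R^r\times\DD$, the point still satisfies $y_i'$ bounded away from $0$ for $i\in J$ (for $D'$ close to $D$ the corresponding module survives), while $y_k'$ can be made arbitrarily small for $k\in J^c$; by continuity of $[ijk]^{D'}$ we keep $[ijk]^{D'}>0$, so the term $-\frac14\frac{y_i'y_j'}{y_k'}[ijk]^{D'}$ dominates and forces $S(g)<-\e$. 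Shrinking $U$ appropriately (using that $\Delta^{D'}$ and the constants vary continuously and the finitely many ``good'' positive terms are bounded above) gives the claim.

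For the second assertion, suppose $y$ lies in a stratum $\Delta_\fk^D$, so $\fh\oplus\fm_J^D=\fk$ is a subalgebra. Then $K/H$ is a genuine homogeneous space with fiber modules $\{\fm_i\mid i\in J\}$, and as $y_k'\to 0$ for $k\in J^c$ the metric $g=(y',D')$ degenerates toward the canonical variation picture of the fibration $K/H\to G/H\to G/K$ with collapsing base. The terms of $S(g)$ that survive in the limit are exactly those forming the scalar curvature of the induced metric on the fiber $K/H$ after renormalizing the fiber trace to $1$, and all terms that would blow up have vanishing coefficients because $[\fm_J,\fm_J]\subset\fm_J\oplus\fh$ (subalgebra property) kills the dangerous $[ijk]$ with $k\in J^c$. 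Hence $\limsup S(g)$ along any sequence $(y',D')\to(y,D)$ is at most $\sup\{S(h)\mid h\in\M(K/H),\ \tr_hT_{|\fk\cap\fm}=1\}=\alpha_\fk$; this is \cite[Proposition~4.2]{PZ22}. By continuity of all the structure constants and $b_i$ in $D$, and upper semicontinuity of this limit, there is a neighborhood $V$ of $(y,D)$ on which $S(g)<\alpha_\fk+\e$. The main obstacle in both parts is organizing the neighborhood so that it controls \emph{all} nearby decompositions $D'$ simultaneously; this is handled by the compactness of $\DD$ and the joint continuity of $D\mapsto([ijk]^D,b_i^D,T_i^D)$ already established in the excerpt, together with the fact that for $D'$ near $D$ the index set $J$ still picks out a well-defined collapsing face.
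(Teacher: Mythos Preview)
Your proposal is correct and matches the paper's treatment: the paper does not prove this proposition directly but states it as an immediate consequence of \cite[Propositions~4.1 and~4.2]{PZ22}, exactly the references you invoke. Your added sketch of the mechanism---that on an $(\Delta_J,\infty)$-stratum some $[ijk]^D>0$ with $i,j\in J$, $k\in J^c$ forces a divergent negative term with no cancellation (all $[ijk]\ge0$), while on a $\Delta_\fk$-stratum the subalgebra condition kills precisely those terms and the surviving part is bounded above by the fiber scalar curvature, hence by $\alpha_\fk$---is accurate and is the content of the cited propositions.
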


Among other things, this proposition implies that, if $g_i\in\M_T$  is a divergent sequence with $S(g_i)$ bounded, there exist a decomposition $D$ and an intermediate subalgebra $\fk$ such that some subsequence of $g_i$ has a limit in $\Delta_\fk^D$.

We finally state the main theorem in~\cite{PZ22}. This result is a sufficient condition for the existence of a maximum.

\begin{thm}{\label{Ex_max}}	If $H$ is not maximal in $G$, then the set of intermediate subgroups $K$ with $\alpha_\fk$ equal to $\sup_\fl\alpha_\fl$ is non-empty. If $K$ is such a subgroup of the lowest possible dimension and ${\beta_{\fk}-\alpha_{\fk}>0}$, then $S_{|\M_T}$ achieves its supremum.
\end{thm}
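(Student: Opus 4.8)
The plan is to establish the two assertions in turn. For the first, I would observe that the set $\mathcal I$ of intermediate subalgebras $\fh\subsetneq\fl\subsetneq\fg$ is compact inside $\bigsqcup_k\Gr_k(\fg)$: a convergent sequence of Lie subalgebras containing $\fh$, of a fixed dimension $k$ with $\dim\fh<k<\dim\fg$, has as limit another such subalgebra, since the bracket relations and the inclusion of $\fh$ are closed conditions and the dimension is locked. I would then show that $\fl\mapsto\alpha_\fl$ is upper semicontinuous on $\mathcal I$. Given $\fl_i\to\fl_0$, the realization property recalled before \eqref{inclusion} lets us write $\alpha_{\fl_i}=S(h_i)$ for a metric $h_i$ on $L_i'/H$, where $\fl_i'\subseteq\fl_i$ is an intermediate subalgebra and $\tr_{h_i}T_{|\fl_i'\cap\fm}=1$; the canonical variation of $h_i$ with the base blown up produces metrics in $\M_T(G/H)$ whose scalar curvature tends to $\alpha_{\fl_i}$ and which converge, in the compactification $\mathbb R^r\times\mathcal D$, to the stratum $\Delta_{\fl_i'}^{D_i}$. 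Passing to a subsequence with $D_i\to D_0$ in $\mathcal D$ and $\fl_i'\to\fl_0'\subseteq\fl_0$, the second part of \pref{prop_max_est}---whose strength lies precisely in allowing the decomposition to vary---bounds $\limsup_i\alpha_{\fl_i}$ by $\alpha_{\fl_0'}$, and \eqref{inclusion} gives $\alpha_{\fl_0'}\le\alpha_{\fl_0}$. An upper semicontinuous function on a compact set attains its supremum, so the set of intermediate $K$ with $\alpha_\fk=\sup_\fl\alpha_\fl$ is nonempty; fix one, call it $K$, of least dimension, and write $\bar\alpha=\alpha_\fk$.

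For the second assertion, assume also $\beta_\fk-\alpha_\fk>0$. The first step is to upgrade $\alpha_\fk$ from a supremum to an attained maximum on $K/H$ itself: the realization property yields an \emph{intermediate} subgroup $K'\subseteq K$ and $g_F\in\M(K'/H)$ with $\tr_{g_F}T_{|\fk'\cap\fm}=1$ and $S(g_F)=\alpha_{\fk'}=\alpha_\fk$; since $K'$ is intermediate with $\alpha_{\fk'}=\bar\alpha$, minimality of $\dim K$ forces $K'=K$. So $g_F\in\M_T(K/H)$ and $S(g_F)=\alpha_\fk$. Using $\beta_\fk>\alpha_\fk$, pick $g_B\in\M(G/K)$ with $\tr_{g_B}T_{|\fk^\perp}=1$ and $S(g_B)>\alpha_\fk$, and form the canonical variation $g_t$ of $g_F+g_B$. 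By \eqref{derivative}, $\lim_{t\to0}S(g_t)=S(g_F)=\alpha_\fk$ while $\lim_{t\to0}\tfrac{d}{dt}S(g_t)=S(g_B)-\alpha_\fk>0$, so $S(g_{t_0})>\alpha_\fk$ for some small $t_0>0$. As $g_{t_0}\in\M_T$, the supremum $\Lambda=\sup_{\M_T}S$---finite because $T$ is positive definite, and at least $S$ of any metric in the nonempty set $\M_T$---satisfies $\Lambda>\alpha_\fk=\bar\alpha$.

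It remains to see that $\Lambda$ is attained. Suppose not, and choose a maximizing sequence $(g_i)\subset\M_T$. No subsequence converges in $\M_T$, as the limit would realize $\Lambda$; hence $(g_i)$ is divergent, and since $S(g_i)\to\Lambda$ is bounded, the remark following \pref{prop_max_est} yields a decomposition $D$, an intermediate subalgebra $\fl$, and a subsequence converging in $\mathbb R^r\times\mathcal D$ to a point of the stratum $\Delta_\fl^D$. The second part of \pref{prop_max_est} then forces $\Lambda=\lim_iS(g_i)\le\alpha_\fl\le\bar\alpha=\alpha_\fk$, contradicting $\Lambda>\alpha_\fk$. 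Hence $\Lambda$ is attained and $S_{|\M_T}$ has a global maximum. I expect the real obstacle to be the upper semicontinuity of $\fl\mapsto\alpha_\fl$: it requires tracking simultaneously a metric escaping to infinity \emph{within} a subgroup, the subgroup moving in the Grassmannian, and the decomposition moving in $\mathcal D$, all controlled through \pref{prop_max_est}. A close second is the reduction, via minimality of $\dim K$, of $\alpha_\fk$ to an honest maximum on $K/H$---this is exactly what makes $\lim_{t\to0}S(g_t)=\alpha_\fk$ an equality, which the argument needs; the canonical-variation computation and the stratum estimate are then routine.
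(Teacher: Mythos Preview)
The paper does not actually prove \tref{Ex_max} here; it is quoted as the main theorem of~\cite{PZ22}, and the only argument appearing in the present text is the brief sketch in Remark~(b) after \tref{thm_PSnec}. That sketch covers only the second assertion, and your proof of that part follows it essentially verbatim: the realization property combined with minimality of $\dim K$ shows that $\alpha_\fk$ is an attained maximum on $K/H$ (this is precisely the content of \cite[Proposition~3.7]{PZ22} invoked in the remark); the canonical variation then produces a metric with $S>\alpha_\fk$ via~\eqref{derivative}; and finally the superlevel set is seen to be compact. You use \pref{prop_max_est} directly for the last step where the remark cites \tref{main_1}, but since \tref{main_1} rests on \pref{prop_max_est} and both yield the same bound $\limsup S(g_i)\le\alpha_\fl\le\bar\alpha$ along a divergent sequence, the substitution is harmless and in fact avoids the heavier Palais--Smale machinery.

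For the first assertion the paper gives nothing beyond the reference to~\cite{PZ22}, so your upper-semicontinuity argument is your own. It is plausible, but one step deserves care. You write that the canonical variation of $h_i$ ``produces metrics in $\M_T(G/H)$ whose scalar curvature tends to $\alpha_{\fl_i}$''; however $\alpha_{\fl_i}$ is the \emph{limit} of $S$ along that variation, not a value at a fixed metric, so to extract a single sequence $(g_i)\subset\M_T(G/H)$ with $S(g_i)\to\limsup_i\alpha_{\fl_i}$ and $(y_i,D_i)$ converging to a point of $\Delta_{\fl_0'}^{D_0}$ you need a diagonal choice $t_i\to0$. This is routine but should be said. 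A cleaner alternative is to bypass the ambient space entirely: after passing to a subsequence with $\fl_i'\to\fl_0'\subseteq\fl_0$ and $\dim\fl_i'$ constant, the metrics $h_i$ live on spaces of the same dimension, and either a subsequence converges to some $h_0$ on $L_0'/H$ (giving $\limsup\alpha_{\fl_i}\le S(h_0)\le\alpha_{\fl_0'}\le\alpha_{\fl_0}$), or $h_i$ diverges in $\M_T(L_i'/H)$, in which case one descends again via \pref{prop_max_est} to a smaller subalgebra and iterates; the dimension drops, so the process terminates.
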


\section{Palais--Smale sequences}\label{sec_PS}

In order to find critical points that are not global maxima via mountain pass techniques, one needs certain compactness properties for the functional~$S_{|\M_T}$. Normally, this is achieved by verifying the Palais--Smale condition. However, \pref{PS_canonical} below implies that  this condition cannot be satisfied for $S_{|\M_T}$ unless $H$ is maximal in~$G$. Even so, understanding the properties of divergent Palais--Smale sequences in~$\M_T$ is an important ingredient in the variational analysis of~$S_{|\M_T}$.

A sequence of metrics $(g_i)\subset\M_T$ is called a \emph{Palais--Smale sequence} for $S_{|\M_T}$ if $S(g_i)$ is bounded and
\begin{align*}
\lim_{i\to\infty}|\grad S_{|\M_T}(g_i)|_{g_i}=0,
\end{align*}
where the gradient and the norm are taken with respect to the metric on the tensor bundle of $\M$ induced by~$g_i$. The Palais--Smale condition is satisfied if every such sequence has a convergent subsequence. 
Note that we may also  assume, by going to a subsequence, that $S(g_i)$ converges to some $\lambda\in\mathbb R$ as $i\to\infty$.

To clarify this condition, we start by computing $\grad S_{|\M_T}$. It is well known (see, e.g.,~\cite[Proposition~4.17]{AB87}) that the gradient of the functional $S:\M\to \R$ satisfies
$$
\grad S(g)=-\Ric(g).
$$
The tangent space of $\M_T$ at the metric $g$ consists of those (0,2)-tensor fields $h$ on $G/H$ for which $\ml h,T\mr_g=0$. Projecting $\grad S$ onto this space, we find
\begin{equation}\label{gradS_compute}
\grad S_{|\M_T}(g)=-\Ric(g)+\frac{\ml \Ric(g),T\mr_g}{\ml T,T\mr_g}T.
\end{equation}
Thus, as expected,  $\grad  S_{|\M_T}(g)=0$ if and only if $\Ric(g)=c T$ for some constant~$c$.

Let $(g_i)$ be a Palais--Smale sequence with $\lim_{i\to\infty} S(g_i)=\lambda$. Substituting $g_i$ into~\eqref{gradS_compute}, taking the trace with respect to $g_i$, and using $\tr_{g_i}T=1$, it follows that 
$$
\lim_{i\to\infty} \frac{\ml \Ric(g_i),T\mr_{g_i}}{\ml T,T\mr_{g_i}} =\lambda.
$$
Altogether, we see that $g_i$ a Palais--Smale sequence with $\lim_{i\to\infty} S(g_i)=\lambda$ if and only if
\begin{align}\label{PS_sequence}
\lim_{i\to\infty} S(g_i)=\lambda \qquad \text{and} \qquad \lim_{i\to\infty}|\Ric(g_i)-\lambda T|_{g_i}=0.
\end{align}

We first construct examples of divergent Palais-Smale sequences.  Recall that given an intermediate subgroup $K$, we consider the homogeneous fibration $K/H\to G/H\to G/K$. Fixing a homogeneous  metric $g_F$ on the fiber $F$ and $g_B$  on the base $B$,  we obtain a two-parameter family of metrics 
$g_{s,t}$ on the total space $G/H$: 
 \begin{equation*}
	g_{s,t}=\frac1s \,g_F + \frac1t\,g_B.
\end{equation*}
Solving  the constraint $\tr_{g_{s,t}}T=s\tr_{g_F}T_{|K/H}+t\tr_{g_B}T_{|G/K}=1$ for~$s$, we obtain the family $(g_t)\subset S_{|\M_T} $, which is called a \textit{canonical variation}.

\begin{prop}\label{PS_canonical}
Fix an intermediate subgroup~$K$ and a positive-definite tensor $T$, and assume that the homogeneous space $K/H$ supports a $K$-invariant metric $g_F$ such that
	\begin{align}\label{PRC_on_KH}
		\Ric(g_F)=\lambda T_{|K/H}\qquad\mbox{and}\qquad\tr_{g_F}T_{|K/H}=\tfrac12
	\end{align}
	for some $\lambda\ge0$. After choosing a $G$-invariant metric $g_B$ on~$G/K$ with $\tr_{g_B}T_{|G/K}=\tfrac12$, we obtain the canonical variation $g_t$.  Then the metrics $g_{1/i}$ form a divergent Palais--Smale sequence for the functional $S_{|\mathcal M_T}$ with $\lim_{i\to\infty} S(g_{1/i})=\lambda$.
\end{prop}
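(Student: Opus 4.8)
The plan is to show that the sequence $(g_i):=(g_{1/i})$ has the three properties required of a divergent Palais--Smale sequence with limit $\lambda$: it leaves every compact subset of $\M_T(G/H)$, its scalar curvatures tend to $\lambda$, and it satisfies the reformulated Palais--Smale condition $|\Ric(g_{1/i})-\lambda T|_{g_{1/i}}\to0$ from~\eqref{PS_sequence}. We first fix the normalization: imposing $\tr_{g_{s,t}}T=s\tr_{g_F}T_{|K/H}+t\tr_{g_B}T_{|G/K}=\tfrac s2+\tfrac t2=1$ forces $s=2-t$, so the canonical variation is $g_t=\tfrac1{2-t}g_F+\tfrac1tg_B$ and $g_{1/i}=\tfrac1{2-1/i}g_F+i\,g_B$, which lies in $\M_T$ for every $i\ge1$.

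Divergence is immediate: the base factor $i\,g_B$ blows up, so the eigenvalues of $g_{1/i}$ with respect to $Q$ along $\fk^\perp$ tend to $\infty$ and $(g_{1/i})$ leaves every compact subset of $\M_T(G/H)$ (and, by the remark after \pref{prop_max_est}, a subsequence has a limit in the stratum $\Delta_{\fk}$). For the scalar curvatures we use the first limit in~\eqref{derivative}, $\lim_{t\to0}S(g_t)=S_F/T_1^*$, with $T_1^*=\tr_{g_F}T_{|K/H}=\tfrac12$. Taking the $g_F$-trace of $\Ric(g_F)=\lambda T_{|K/H}$ gives $S_F=S(g_F)=\lambda\,\tr_{g_F}T_{|K/H}=\tfrac\lambda2$, so $S_F/T_1^*=\lambda$ and $S(g_{1/i})\to\lambda$.

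The substantive step is the estimate $|\Ric(g_{1/i})-\lambda T|_{g_{1/i}}\to0$. We evaluate the submersion formula~\eqref{SubRicci} at $s=2-1/i$, $t=1/i$ and split every $(0,2)$-tensor on $\fm$ into its fiber block (on $\fk\cap\fm$), base block (on $\fk^\perp$), and mixed block. Since the Ricci $(0,2)$-tensor is scale invariant, the term $\Ric_F$ in~\eqref{SubRicci} equals $\Ric(g_F)=\lambda T_{|K/H}$, which is exactly the fiber block of $\lambda T$; hence the fiber block of $\Ric(g_{s,t})-\lambda T$ is $\tfrac{t^2}{s^2}\langle A\cdot,A\cdot\rangle_g=O(t^2)$, while its mixed and base blocks---built from $\tfrac ts\Div A$, $\Ric_B$, $\tfrac{2t}{s}\langle A_\cdot,A_\cdot\rangle_g$ and the fixed tensor $\lambda T$---remain bounded as $i\to\infty$. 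Now the inverse of $g_{s,t}$ equals $s\,g_F^{-1}$ on $\fk\cap\fm$ and $t\,g_B^{-1}$ on $\fk^\perp$, so for a symmetric $2$-tensor $h$ with blocks $h_{\mathrm{ff}},h_{\mathrm{fb}},h_{\mathrm{bb}}$ one has $|h|^2_{g_{s,t}}=s^2|h_{\mathrm{ff}}|^2+2st\,|h_{\mathrm{fb}}|^2+t^2|h_{\mathrm{bb}}|^2$, the three norms taken with respect to the fixed metric $g=g_F+g_B$. Applying this with $h=\Ric(g_{s,t})-\lambda T$, the three summands carry the factors $t^4/s^2$, $st$ and $t^2$, all of which tend to $0$ while the remaining factors stay bounded; hence $|\Ric(g_{1/i})-\lambda T|_{g_{1/i}}\to0$, which by~\eqref{PS_sequence} completes the verification.

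The one point that needs care is this last computation. The mixed block of $\Ric(g_{s,t})-\lambda T$ does \emph{not} tend to $0$ as a tensor---it converges to $-\lambda T$ on $(\fk\cap\fm)\times\fk^\perp$---yet it makes no contribution in the limit, because the degenerating metric $g_{s,t}$ weights it by $st=t(2-t)\to0$; similarly the base block is suppressed by the weight $t^2$. Only the fiber block is genuinely seen by the metric, and there the hypothesis $\Ric(g_F)=\lambda T_{|K/H}$ arranges that it cancels against $\lambda T$ up to a term of order $t^2$. Everything else is a direct substitution into the formulas of \sref{prelim}, so we expect no real difficulty beyond this bookkeeping.
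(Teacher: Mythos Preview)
Your proof is correct and follows essentially the same approach as the paper's. Both arguments rest on the block decomposition of $\Ric(g_{s,t})-\lambda T$ coming from~\eqref{SubRicci} together with the observation that the $g_{s,t}$-norm suppresses the base and mixed blocks by factors of $t^2$ and $st$, while the fiber block vanishes to order $t^2$ thanks to the hypothesis $\Ric(g_F)=\lambda T_{|K/H}$; the only cosmetic difference is that you verify the reformulated condition~\eqref{PS_sequence} directly, whereas the paper first shows $|\Ric_t-\pi^*\Ric_F|_{g_t}\to0$ and then passes through the gradient formula~\eqref{gradS_compute}.
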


\begin{rem} \label{more-divergent} (a)
Not all divergent Palais--Smale sequences arise in this fashion. For instance, consider the curve of metrics $g(a)=(x_1(a),x_2(a),a)$ on the Wallach space (see \sref{Wallach_ex}), where $x_2(a)=\sqrt{ba+a^2}$ for some $b\ge0$ and $x_1(a)$ is such that $\tr_{g(a)}T=1$. One can easily check that $(g(i))$ is a divergent Palais--Smale sequence and that it comes from a canonical variation only if $b=0$. 
	
	(b) If $\fk$ is an intermediate subalgebra such that $\alpha_\fk$ is not assumed in $\M_T(K/H)$, we still have a canonical variation with limit $\alpha_\fk$ since there exists an intermediate subalgebra $\fk'\subset \fk$ with  $\alpha_{\fk'}=\alpha_\fk$ and a metric $g\in\M_T(K'/H)$ with $S(g)=\alpha_\fk$.    Hence all the values $\alpha_\fk$ are critical levels at infinity. 
\end{rem}
\begin{proof}
Taking the trace of the first equality in~\eqref{PRC_on_KH}, we find $S(g_F)=\frac\lambda2$. In light of~\eqref{derivative},
	$$
	\lim_{t\to0}S(g_t)=\frac{S(g_F)}{\tr_{g_F}T_{|K/H}}=\lambda.
	$$
	It remains to show that $|\grad S_{|\M_T}(g_t)|_{g_t}$ goes to~0. Denote by 
	$\Ric_t$ the Ricci curvature of~$g_t$. According to \eqref{SubRicci},
	\begin{align*}
		\Ric_t(u,v)=
		\begin{cases}
			\Ric_F(u,v)+\frac{t^2}{(2-t)^2}\ml Au,Av\mr_g &\text{for}~u,v\in \fk\cap\fm, 
			\\ 
			\Ric_B(u,v)-\frac{2t}{2-t}\ml A_u,A_v\mr_g &\text{for}~u,v\in \fk^\perp,
			\\
			\frac{t}{2-t}\,\Div A\,(u,v) &\text{for}~u\in\fk\cap\fm,\ v\in\fk^\perp.
		\end{cases}
	\end{align*}
	Since   $\lim_{t\to0}|\Ric_B|_{g_t}=0$, due to the fact that the metric on the base blows up, we get
	\begin{align*}
		\lim_{t\to0}|\Ric_t-\pi^*\Ric_F|_{g_t}=0,
	\end{align*}
	where $\pi$ is the $Q$-orthogonal projection from $\fm$ onto~$\fk$. This implies
	\begin{align*}
		\lim_{t\to0}|\grad S_{|\M_T}(g_t)|_{g_t}&=\lim_{t\to0}\bigg|\Ric_t-
		\frac{\ml\Ric_t,T\mr_{g_t}}{\ml T,T\mr_{g_t}}T\bigg|_{g_t}
		\\
		&=\bigg|\Ric_F-
		\frac{\ml\Ric_F,T_{|K/H}\mr_{g_F}}{\ml T_{|K/H},T_{|K/H}\mr_{g_F}}T_{|K/H}\bigg|_{g_F}=|\Ric_F-\lambda T_{|K/H}|_{g_F}=0,
	\end{align*}
	which shows that $(g_{1/i})$ is a divergent Palais--Smale sequence.
\end{proof}

\begin{rem}
One could easily misinterpret equation~\eqref{derivative} to say that the canonical variation gives rise to a Palais--Smale sequence if the ``derivative" $\beta_\fk-\alpha_\fk$ vanishes. In the pictures in the coordinates~$y_i$, that is indeed what it looks like. However, this is misleading. For example, in the coordinates~$x_i$, the corresponding first derivative is~$0$, and the quantity $\beta_\fk-\alpha_\fk$ is only related to the second derivative. The reason is that the equality $\grad S(g)=-\Ric(g)$ only holds if the gradient is taken with respect to the metric~$g$. The results in this section show that Palais--Smale sequences are characterised by their convergence to criticals points of~$S_{|\M_T(K/H)}$, which is unrelated to the ``derivative". Instead, these ``derivatives" are important for estimating the scalar curvature; cf.~\pref{neg_der} and \cite[Proposition 3.7]{PZ22}.
\end{rem}

We now  prove the converse of \pref{PS_canonical}, thus classifying all  divergent Palais--Smale sequences. 
 Let $(g_i)\subset\M_T$ be a divergent Palais--Smale sequence. For every $i\in\N$, there exists a decomposition $D_i\in\DD$ with modules $\mathfrak m_1^i,\ldots,\mathfrak m_r^i$ such that
\begin{align*}
g_i=\tfrac1{y_{1i}}Q_{|\fm_1^i}+\tfrac1{y_{2i}}Q_{|\fm_2^i}+\cdots+\tfrac1{y_{ri}}Q_{|\fm_r^i}
\end{align*}
for some $y_{1i},\ldots,y_{ri}>0$. As per the conventions set out in Section~\ref{prelim}, we identify $g_i$ with $((y_{1i},\ldots,y_{ri}),D_i)$. Passing to a subsequence if necessary, we may assume that $D_i$ converge to a decomposition $D$ with modules $\fm_1,\ldots,\fm_r$ and the points $(y_{1i},\ldots,y_{ri})$ converge to a point $(y_{1},\ldots,y_{r})$ in the closure of the simplex~$\Delta=\Delta^D$. Since $(g_i)$ diverges, $(y_1,\ldots,y_r)$ must lie in the boundary of~$\Delta$.

\begin{thm}\label{thm_PSnec}
Let $(g_i)$ be a divergent Palais--Smale sequence with $\lim_{i\to\infty} S(g_i)=\lambda$. There exists an intermediate subgroup $K$ with Lie algebra $\mathfrak k$ such that the limit point $(y_1,\ldots,y_r)$ lies in the stratum~$(\Delta_J,\fk)$. Furthermore, the metric $g_F$ on $K/H$ given by
\begin{align}\label{gF_def}
g_F=\sum_{j\in J}\tfrac1{y_j}Q_{|\mathfrak m_j}
\end{align}
satisfies 
\begin{align}\label{PS_nec}
\Ric(g_F)=\lambda T_{|K/H}. 
\end{align}
\end{thm}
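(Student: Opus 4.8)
The plan is to exploit the scalar-curvature formula \eqref{scalar_y} together with the Palais--Smale conditions \eqref{PS_sequence} to force the limiting stratum to be a subalgebra stratum, and then to identify the limiting Ricci equation on the fiber. First I would use \pref{limit_subalgebra}: since $S(g_i)$ is bounded and $(g_i)$ is divergent, the limit point $(y_1,\dots,y_r)$ cannot lie in a stratum $(\Delta_J,\infty)$ (there the scalar curvature would tend to $-\infty$ on a neighborhood), so it lies in some stratum $\Delta_\fk^D$ where $\fk=\fh\oplus\fm_J$ is an intermediate subalgebra. Let $K$ be the corresponding intermediate subgroup. After reindexing, $J=\{1,\dots,m\}$, the variables $y_{ji}$ with $j\in J$ converge to positive numbers $y_j>0$, and the variables $y_{ki}$ with $k\in J^c$ converge to $0$; the metric $g_F$ of \eqref{gF_def} is then a well-defined $K$-invariant metric on $K/H$, and it is the fiber part of $g_i$ in the limit.

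Next I would show that the off-diagonal Ricci terms and the Ricci terms in directions $\fm_{J^c}$ become negligible relative to $g_i$, while the diagonal Ricci block on $\fm_J$ converges to $\Ric(g_F)$ on the fiber. The key point is that, along a divergent sequence converging to $\Delta_\fk^D$, the metric on $\fm_{J^c}$ blows up (the $x_k\to\infty$, equivalently $y_k\to 0$), so the contribution of the base and of the O'Neil tensor to the Ricci curvature, measured in the $g_i$-norm, tends to zero — exactly as in the proof of \pref{PS_canonical} via \eqref{SubRicci}, except that here I cannot assume the metric is an exact canonical variation, so I must argue directly from the formulas \eqref{RiccDiag} and \eqref{RiccOffDiag}. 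Concretely, every term in \eqref{RiccDiag} and \eqref{RiccOffDiag} involving an index in $J^c$ is suppressed by a factor of $y_k\to 0$ (or $y_k/y_l$ with $k\in J^c$) after one accounts for the $g_i$-normalization $|\cdot|_{g_i}$, which weights the $\fm_i$-block by $y_i$; here one uses that $[ijk]=0$ whenever exactly one index lies in $J$ and two lie outside, or two inside and one outside with the outside one being $\fm_k$ in the ``bad'' position — precisely the bracket relations encoding that $\fk$ is a subalgebra and $\fk^\perp$ is $\Ad_K$-invariant. Thus $\lim_{i\to\infty}|\Ric(g_i)-\pi^*\Ric(g_F)|_{g_i}=0$, where $\pi$ is the $Q$-orthogonal projection onto $\fk$.

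Combining this with the Palais--Smale condition $\lim_{i\to\infty}|\Ric(g_i)-\lambda T|_{g_i}=0$ and the triangle inequality for $|\cdot|_{g_i}$, I get $\lim_{i\to\infty}|\pi^*\Ric(g_F)-\lambda T|_{g_i}=0$. Since $\Ric(g_F)$ and $T_{|K/H}$ are fixed tensors supported on $\fm_J$ where $g_i$ converges to the nondegenerate metric $g_F$, and since the $\fm_{J^c}$-components of $\lambda T$ contribute a bounded amount while being killed by $\pi^*$ only on the fiber — here I should be slightly careful and instead test against fiber directions — the restriction to $\fm_J$ gives $\Ric(g_F)=\lambda T_{|K/H}$. (More precisely: pair the vanishing limit with unit vectors in $\fm_J$; the $g_i$-norm restricted to $\fm_J$ converges to the $g_F$-norm, forcing the $\fm_J\times\fm_J$ block of $\Ric(g_F)-\lambda T$ to vanish.)

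The main obstacle I anticipate is the bookkeeping in the second step: showing rigorously that \emph{all} the unwanted Ricci contributions vanish in the $g_i$-norm, uniformly enough to pass to the limit, given that $g_i$ need not be a canonical variation — the decompositions $D_i$ vary, some modules may be equivalent so $\Ric(g_i)$ genuinely has off-diagonal parts, and one must control cross terms $[jkl]$ with mixed indices using only the subalgebra/invariance relations $Q([\fk,\fk^\perp],\fk)=0$ and $Q([\fk^\perp,\fk^\perp],\fk)$ possibly nonzero. Handling the latter (the $\Div A$ and $\ml A_u,A_v\mr$ terms) requires checking that their $g_i$-norm carries a genuine positive power of the blown-up parameters; the estimate \eqref{SubRicci} together with the fact that $t/s\to 0$ along the sequence is the conceptual reason, and making it precise for arbitrary (not canonically-varied) Palais--Smale sequences converging to $\Delta_\fk$ is the technical heart of the argument.
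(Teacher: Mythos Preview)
There is a genuine gap. Your claim that $[ijk]=0$ whenever exactly one index lies in $J$ and two lie in $J^c$ is false: if $j\in J$ and $k,l\in J^c$, the structure constant $[jkl]$ can be nonzero (these are precisely the O'Neill-type terms). Consequently, for $j\in J$ the diagonal Ricci block contains a term
\[
-\frac{1}{2d_j}\sum_{k,l\in J^c}[jkl]\,\frac{y_{li}}{y_{ki}},
\]
which does \emph{not} vanish in the limit and is not suppressed by any $g_i$-normalization (it sits on $\fm_j$, where the metric stays bounded). For $\Ric(g_i)_{|\fm_j}$ to converge to $\Ric(g_F)_{|\fm_j}$, you need this sum to converge to $\sum_{k,l\in J^c}[jkl]$, i.e., you need $y_{li}/y_{ki}\to 1$ whenever $[jkl]>0$ with $j\in J$ and $k,l\in J^c$. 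This is not automatic; for a canonical variation it holds trivially because all $y_{ki}$ with $k\in J^c$ are equal, but for a general Palais--Smale sequence it must be \emph{deduced} from the Palais--Smale condition applied to the $\fm_{J^c}$ components.

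This is exactly the technical heart of the paper's argument, carried out in a separate lemma whose proof occupies most of the section. One shows, by an inductive argument over the indices in $J^c$ ordered by the size of $y_{qi}$, two things: first, that $\lim_i y_{pi}/y_{qi}=1$ whenever $[jpq]>0$ for some $j\in J$; second, that the potentially divergent terms $[abq]_i\,y_{ai}y_{bi}/y_{qi}$ with $q\in J^c$ tend to~$0$ under suitable hypotheses. Both statements use the Palais--Smale condition on the $\fm_q$ components (your equation from \eqref{PS_sequence} with $j\in J^c$), summed over carefully chosen subsets of $J^c$ and regrouped so that positive and negative terms can be separated. The ratio statement then requires an additional contradiction argument chasing chains of indices linked by nonzero structure constants. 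Your proposal correctly locates the difficulty in the final paragraph but misidentifies which structure constants vanish and therefore does not see that controlling these ratios is the essential step rather than a bookkeeping detail.
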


\begin{proof}
 The existence of $K$ such that $(y_1,\ldots,y_r)$ lies in $(\Delta_J,\fk)$ follows from the convergence of $S(g_i)$ and Proposition~\ref{limit_subalgebra}. It remains to show that the metric $g_F$ given by~\eqref{gF_def} satisfies~\eqref{PS_nec}.

Denote by $b_{ji}$, $[jkl]_i$ and~$T_{ji}$ the constants associated with the decomposition~$D_i$ which converge to the constants  $b_j$, $[jkl]$ and $T_j$ associated to $D$. Without loss of generality, assume that the dimension $d_j=\dim\fm_j^i$ does not depend on~$i$. According to~\eqref{RiccDiag},
\begin{align}\label{RicciDiagonal}
\Ric(g_i)_{|\fm_j^i}&=\bigg(\frac{b_{ji}}2
-\frac1{2d_j} \sum_{k,l=1}^r [jkl]_i\frac{y_{li}}{y_{ki}}
+\frac1{4d_j}\sum_{k,l=1}^r[jkl]_i\frac{y_{ki}y_{li}}{y_{ji}^2}\bigg)Q_{|\fm_j^i}
\qquad \text{for all $j\in I$}.
\end{align} 
Thus \eqref{PS_sequence} implies that
\begin{align*}
\frac{b_jy_j}2-\frac1{2d_j}\lim_{i\to\infty}&\bigg(
\sum_{k,l=1}^r [jkl]_i\frac{y_{li}y_{ji}}{y_{ki}}
-\frac1{2}\sum_{k,l=1}^r[jkl]_i\frac{y_{ki}y_{li}}{y_{ji}}\bigg)
=\lambda T_jy_j \qquad  \text{ for $j\in I$}.
\end{align*}
Recall also that the limit $y_k=\lim_{i\to\infty} y_{ki}$ is positive if $k\in J$ and zero if $k\in J^c$. Furthermore, $\lim_{i\to\infty} [jkl]_i = [jkl] =0$ if $k,l\in J$ and $j\in J^c$. Thus
\begin{align}\label{lim_Ric1}
\lim_{i\to\infty}
\sum_{k,l=1}^r [jkl]_i\frac{y_{ki}}{y_{li}}
<\infty \qquad  \text{ for $j\in J$,}
\end{align}
and 
\begin{align}\label{lim_Ric2}
\lim_{i\to\infty}
\sum_{k,l=1}^r [jkl]_i\left(\frac{y_{li}y_{ji}}{y_{ki}}+\frac{y_{ki}y_{ji}}{y_{li}}
-\frac{y_{ki}y_{li}}{y_{ji}}\right)
=0 \qquad  \text{for $j\in J^c$}.
\end{align}

Given $p\in J^c$, denote by $J_p$ the set containing $p$ and all the indices $q\in J^c$ with the following property:  
there exist finite sequences $j_1,\ldots,j_\mu\in J$ and $p_1,\ldots,p_{\mu-1}\in J^c$ such that
\begin{align*}
[j_1pp_1][j_2p_1p_2][j_3p_2p_3]\cdots[j_{\mu-1}p_{\mu-2}p_{\mu-1}][j_\mu p_{\mu-1}q]>0.
\end{align*}
The space $\fm_{J_p}$ is $\Ad_K$-invariant, while for any nonempty proper subset $J_p'\subset J$, the space $\fm_{J_p'}$ is not.

In order to show that the metric $g_F$  satisfies~\eqref{PS_nec}, we need the following lemma. Passing to a subsequence if necessary, we can assume that for all~$j,k,l\in I$, the sequences 
$$
[jkl]_i\frac{y_{ji}y_{ki}}{y_{li}} 
\qquad\mbox{and}\qquad \frac{y_{ki}}{y_{li}}
$$
are monotone in $i$.

\begin{lem}\label{lem_PS_same_coord}
The following statements hold for all $a,b\in I$ and $p,q\in J^c$:
\begin{enumerate}
\item[\emph{(1)}]
If there exists $j\in J$ such that $[jpq]>0$, then
\begin{align*}
\lim_{i\to\infty}\frac{y_{pi}}{y_{qi}}=1.
\end{align*}

\item[\emph{(2)}]
If $[jaq]=0$ and $[jbq]=0$ for all $j\in J$, then
\begin{align}\label{key_lem_eq2}
\lim_{i\to\infty}[abq]_i\frac{y_{ai}y_{bi}}{y_{qi}}=0.
\end{align}
\end{enumerate}
\end{lem}

\begin{proof}
Choose $\alpha_i\in J_c$ so that $y_{\alpha_ii}=\min\{y_{qi}\,|\,q\in J^c\}$. Passing to a subsequence if necessary, we may assume that $\alpha_i$ does not depend on~$i$. This allows us to omit the index in our notation. Thus, we have
\begin{align}\label{rat_control}
\frac{y_{\alpha_ii}}{y_{q i}}=\frac{y_{\alpha i}}{y_{q i}}\le1,\qquad q\in J^c,~i\in\mathbb N.
\end{align}
Formula~\eqref{lim_Ric2} yields
\begin{align*}
\sum_{q\in J_\alpha}\sum_{k,l\in I} [qkl]_i\left(\frac{y_{li}y_{qi}}{y_{ki}}+\frac{y_{ki}y_{qi}}{y_{li}}
-\frac{y_{ki}y_{li}}{y_{qi}}\right)
\to0,\qquad i\to\infty.
\end{align*}
Regrouping the summands, we obtain
\begin{align*}
\sum_{q\in J_\alpha}\sum_{k,l\in I} [qkl]_i&\Big(\frac{y_{li}y_{qi}}{y_{ki}}+\frac{y_{ki}y_{qi}}{y_{li}}
-\frac{y_{ki}y_{li}}{y_{qi}}\Big)
\\
&=2\sum_{q\in J_\alpha}\sum_{k,l\in J}[qkl]_i\frac{y_{li}y_{qi}}{y_{ki}}-\sum_{q\in J_\alpha}\sum_{k,l\in J}[akl]_i\frac{y_{ki}y_{li}}{y_{qi}}
\\
&\hphantom{=}~+2\sum_{q\in J_\alpha}\sum_{k,l\in J^c}[qkl]_i\frac{y_{li}y_{qi}}{y_{ki}}-\sum_{q\in J_\alpha}\sum_{k,l\in J^c}[akl]_i\frac{y_{ki}y_{li}}{y_{qi}}
\\
&\hphantom{=}~+2\sum_{q\in J_\alpha}\sum_{k\in J}\sum_{l\in J^c}[qkl]_i\frac{y_{li}y_{qi}}{y_{ki}}+2\sum_{q,l\in J_\alpha}\sum_{k\in J}[qkl]_i\Big(\frac{y_{ki}y_{qi}}{y_{li}}-\frac{y_{ki}y_{li}}{y_{qi}}\Big)
\\
&\hphantom{=}~+2\sum_{q\in J_\alpha}\sum_{k\in J}\sum_{l\in J^c\setminus J_\alpha}[qkl]_i\Big(\frac{y_{ki}y_{qi}}{y_{li}}-\frac{y_{ki}y_{li}}{y_{qi}}\Big).
\end{align*}
The sixth term on the right-hand side vanishes because it is anti-symmetric in $q$ and~$l$. As a consequence,
\begin{align}\label{eq_big_aux1}
&2\sum_{q\in J_\alpha}\sum_{k\in J}\sum_{l\in I}[qkl]_i\frac{y_{li}y_{qi}}{y_{ki}}-\sum_{q\in J_\alpha}\sum_{k,l\in J}[qkl]_i\frac{y_{ki}y_{li}}{y_{qi}}\notag
\\
&\hphantom{=}~+2\sum_{q\in J_\alpha}\sum_{k,l\in J^c}[qkl]_i\frac{y_{li}y_{qi}}{y_{ki}}-\sum_{q\in J_\alpha}\sum_{k,l\in J^c}[qkl]_i\frac{y_{ki}y_{li}}{y_{qi}}\notag
\\
&\hphantom{=}~+2\sum_{q\in J_\alpha}\sum_{k\in J}\sum_{l\in J^c\setminus J_\alpha}[qkl]_i\frac{y_{ki}y_{qi}}{y_{li}}-2\sum_{q\in J_\alpha}\sum_{k\in J}\sum_{l\in J^c\setminus J_\alpha}[qkl]_i\frac{y_{ki}y_{li}}{y_{qi}}\to0,\qquad i\to\infty.
\end{align}
Let us show that the three positive terms on the left-hand side go to~0. This will partly imply assertion~(2) of the lemma.

Recall that $y_k>0$ if $k\in J$ and $y_k=0$ if $k\in J^c$. Therefore,
\begin{align*}
\lim_{i\to\infty}\sum_{q\in J_\alpha}\sum_{k\in J}\sum_{l\in I}[qkl]_i\frac{y_{li}y_{qi}}{y_{ki}}&=\sum_{q\in J_\alpha}\sum_{k\in J}\sum_{l\in I}[qkl]\frac{y_{l}y_{q}}{y_{k}}=0.
\end{align*}
Formula~\eqref{lim_Ric1} implies that
\begin{align}\label{ratio_same_module}
\frac{y_{ai}}{y_{bi}}<c\in\mathbb R
\end{align}
whenever $a,b\in J_{\omega}$ for some $\omega\in J^c$. Together with~\eqref{rat_control}, this yields
$$
\frac{y_{qi}}{y_{ki}}=\frac{y_{qi}}{y_{\alpha i}}\frac{y_{\alpha i}}{y_{ki}}<c
$$
for $q\in J_\alpha$ and $k\in J^c$. Consequently,
\begin{align*}
\lim_{i\to\infty}\sum_{q\in J_\alpha}\sum_{k,l\in J^c}[qkl]_i\frac{y_{li}y_{qi}}{y_{ki}}&\le c\sum_{q\in J_\alpha}\sum_{k,l\in J^c}[qkl]y_l=0,
\\
\lim_{i\to\infty}\sum_{q\in J_\alpha}\sum_{k\in J}\sum_{l\in J^c\setminus J_\alpha}[qkl]_i\frac{y_{ki}y_{qi}}{y_{li}}
&\le
c\sum_{q\in J_\alpha}\sum_{k\in J}\sum_{l\in J^c\setminus J_\alpha}[qkl]y_k.
\end{align*}
In the second line, on the right-hand side, since $q$ lies in $J_\alpha$ and $l$ lies outside of $J_\alpha$, the structure constant $[qkl]$ equals~0. We conclude that all three positive terms in~\eqref{eq_big_aux1} vanish. This means that
\begin{align*}
\sum_{q\in J_\alpha}\sum_{k,l\in J}[qkl]_i\frac{y_{ki}y_{li}}{y_{qi}}\notag
+\sum_{q\in J_\alpha}\sum_{k,l\in J^c}[qkl]_i\frac{y_{ki}y_{li}}{y_{qi}}+2\sum_{q\in J_\alpha}\sum_{k\in J}\sum_{l\in J^c\setminus J_\alpha}[qkl]_i\frac{y_{ki}y_{li}}{y_{qi}}\to0,\qquad i\to\infty.
\end{align*}
As a result,~\eqref{key_lem_eq2} holds
whenever $q\in J_\alpha$ and the pair $(a,b)$ lies outside the set $(J\times J_\alpha)\cup(J_\alpha\times J)$. If $q\in J_\alpha$ and $(a,b)\in J\times J_\alpha$, then
\begin{align}\label{aux1_main_lem}
\lim_{i\to\infty}[abq]_i\frac{y_{ai}y_{bi}}{y_{qi}}\le c[abq]y_a.
\end{align}
Similarly, if $q\in J_\alpha$ and $(a,b)\in J_\alpha\times J$, then
\begin{align}\label{aux2_main_lem}
\lim_{i\to\infty}[abq]_i\frac{y_{ai}y_{bi}}{y_{qi}}\le c[abq]y_b.
\end{align}
Now, we can easily conclude that assertion~(2) of the lemma holds for $q\in J_\alpha$.

Choose $\beta_i$ so that
$y_{\beta_ii}=\min\{y_{qi}\,|\,q\in J^c\setminus J_\alpha\}$. Again, passing to a subsequence if necessary, we may assume that $\beta_i$ does not depend on~$i$ and omit the index in the notation. Thus,
\begin{align}\label{rat_con2}
\frac{y_{\beta i}}{y_{qi}}\le1,\qquad q\in J^c\setminus J_\alpha,~i\in\mathbb N.
\end{align}
Let us sum~\eqref{lim_Ric2} over all $j=q\in J_\beta$ and regroup the terms as above. Noting that $y_k=\lim_{i\to\infty}y_{ki}$ is positive when $k\in J$ and zero when $k\in J^c$, we find
\begin{align}\label{eq_big_aux2}
-\sum_{q\in J_\beta}\sum_{k,l\in J}&[qkl]_i\frac{y_{ki}y_{li}}{y_{qi}}+2\sum_{q\in J_\beta}\sum_{k,l\in J^c}[qkl]_i\frac{y_{li}y_{qi}}{y_{ki}}-\sum_{q\in J_\beta}\sum_{k,l\in J^c}[qkl]_i\frac{y_{ki}y_{li}}{y_{qi}}\notag
\\
&+2\sum_{q\in J_\beta}\sum_{k\in J}\sum_{l\in J^c\setminus J_\beta}[qkl]_i\frac{y_{ki}y_{qi}}{y_{li}}-2\sum_{q\in J_\beta}\sum_{k\in J}\sum_{l\in J^c\setminus J_\beta}[qkl]_i\frac{y_{ki}y_{li}}{y_{qi}}\to0,\qquad i\to\infty.
\end{align}
Our next step is to show that the two positive terms on the left-hand side go to~0.

Observe that
\begin{align*}
\sum_{q\in J_\beta}\sum_{k,l\in J^c}[qkl]_i\frac{y_{li}y_{qi}}{y_{ki}}
&=\sum_{q\in J_\beta}\sum_{k\in J^c\setminus J_\alpha}\sum_{l\in J^c}[qkl]_i\frac{y_{li}y_{qi}}{y_{ki}}
\\
&\hphantom{=}~+\sum_{k\in J_\beta}\sum_{q,l\in J_\alpha}[qkl]_i\frac{y_{li}y_{ki}}{y_{qi}}
+\sum_{k\in J_\beta}\sum_{q\in J_\alpha}\sum_{l\in J^c\setminus J_\alpha}[qkl]_i\frac{y_{li}y_{ki}}{y_{qi}}.
\end{align*}
Formulas~\eqref{ratio_same_module} and~\eqref{rat_con2} yield
\begin{align*}
\lim_{i\to\infty}\sum_{q\in J_\beta}\sum_{k\in J^c\setminus J_\alpha}\sum_{l\in J^c}[qkl]_i\frac{y_{li}y_{qi}}{y_{ki}}&\le c\sum_{q\in J_\beta}\sum_{k\in J^c\setminus J_\alpha}\sum_{l\in J^c}[qkl]y_{l}=0,
\\
\lim_{i\to\infty}\sum_{k\in J_\beta}\sum_{q,l\in J_\alpha}[qkl]_i\frac{y_{li}y_{ki}}{y_{qi}}&\le c\sum_{k\in J_\beta}\sum_{q,l\in J_\alpha}[qkl]y_k=0.
\end{align*}
At the same time, noting that $[jaq]=0$ if $j\in J$, $a\in J^c\setminus J_\alpha$ and $q\in J_\alpha$, we see that
\begin{align*}
\lim_{i\to\infty}\sum_{q\in J_\alpha}\sum_{k\in J_\beta}\sum_{l\in J^c\setminus J_\alpha}[qkl]_i\frac{y_{li}y_{ki}}{y_{qi}}=0
\end{align*}
because assertion~(2) of the lemma holds for $q\in J_\alpha$. Thus, the first positive term in~\eqref{eq_big_aux2} goes to~0. Similarly,
\begin{align*}
\sum_{q\in J_\beta}&\sum_{k\in J}\sum_{l\in J^c\setminus J_\beta}[qkl]_i\frac{y_{ki}y_{qi}}{y_{li}}
\\
&=\sum_{q\in J_\beta}\sum_{k\in J}\sum_{l\in J^c\setminus(J_\alpha\cup J_\beta)}[qkl]_i\frac{y_{ki}y_{qi}}{y_{li}}+\sum_{q\in J_\alpha}\sum_{k\in J}\sum_{l\in J_\beta}[qkl]_i\frac{y_{ki}y_{li}}{y_{qi}}\to0,\qquad i\to\infty.
\end{align*}
Now, formula~\eqref{eq_big_aux2} yields
\begin{align*}
\sum_{q\in J_\beta}\sum_{k,l\in J}&[qkl]_i\frac{y_{ki}y_{li}}{y_{qi}}+\sum_{q\in J_\beta}\sum_{k,l\in J^c}[qkl]_i\frac{y_{ki}y_{li}}{y_{qi}}+2\sum_{q\in J_\beta}\sum_{k\in J}\sum_{l\in J^c\setminus J_\beta}[qkl]_i\frac{y_{ki}y_{li}}{y_{qi}}\to0,\qquad i\to\infty.
\end{align*}
This implies that~\eqref{key_lem_eq2} holds for $q\in J_\beta$ and $(a,b)$ outside of $(J\times J_\beta)\cup(J_\beta\times J)$. If $q\in J_\beta$ and $(a,b)\in J\times J_\beta$ or $(a,b)\in J_\beta\times J$, then one can write estimates analogous to~\eqref{aux1_main_lem} or~\eqref{aux2_main_lem}. We conclude that assertion~(2) of the lemma holds for $q\in J_\beta$.

Choose $\gamma_i$ so that
$y_{\gamma_ii}=\min\{y_{qi}\,|\,q\in J^c\setminus (J_\alpha\cup J_\beta)\}$. Pass to a subsequence if necessary to ensure that $\gamma_i$ does not depend on $i$ and omit $i$ from the notation. We can argue as above to show that assertion~(2) of the lemma holds when $q\in J_\gamma$. Continuing in this way, we eventually obtain it for all $q\in J^c$. Now, let us prove assertion~(1).

Given $p\in J^c$, denote
$$
J_p^*=\{p\}\cup\{q\in J^c\,|\,[jpq]>0~\mbox{for some}~j\in J\}.
$$
Inequality~\eqref{ratio_same_module} implies that the sequence $\frac{y_{pi}}{y_{qi}}$ converges to some~$\gamma_{pq}\in(0,\infty)$ for each $q\in J_p^*$, i.e.,
\begin{align*}
\lim_{i\to\infty}\frac{y_{pi}}{y_{qi}}=\gamma_{pq}\in(0,\infty),\qquad q\in J_p^*.
\end{align*}
Our goal is to show that all these $\gamma_{pq}$ equal~1. Choose $p_1\in J^c$ and $p_0\in J_{p_1}^*$ such that $\gamma_{p_1p_0}$ is the smallest possible, i.e.,
\begin{align*}
\gamma_{p_1p_0}=\min\{\gamma_{pq}\,|\,p\in J^c~\mbox{and}~q\in J_p^*\}.
\end{align*}
Since $\gamma_{pq}=\frac1{\gamma_{qp}}$ and $q\in J_p^*$ if and only if $p\in J_q^*$, it suffices to prove that $\gamma_{p_1p_0}\ge1$. Assume that $\gamma_{p_1p_0}<1$. Obviously, $p_1\ne p_0$. Using~\eqref{lim_Ric2} with $j=p_1$, we obtain
\begin{align*}
2&\sum_{k,l\in J}[p_1kl]_i\frac{y_{p_1i}y_{li}}{y_{ki}}-\sum_{k,l\in J}[p_1kl]_i\frac{y_{ki}y_{li}}{y_{p_1i}}
\\
&+2\sum_{k,l\in J^c}[p_1kl]_i\frac{y_{p_1i}y_{ki}}{y_{li}}-\sum_{k,l\in J^c}[p_1kl]_i\frac{y_{ki}y_{li}}{y_{p_1i}}
\\
&+2\sum_{k\in J}\sum_{l\in J^c}[p_1kl]_i\frac{y_{p_1i}y_{li}}{y_{ki}}+2\sum_{k\in J}\sum_{l\in J_{p_1}^*}y_{ki}[p_1lk]_i\Big(\frac{y_{p_1i}}{y_{li}}-\frac{y_{li}}{y_{p_1i}}\Big)
\\
&+2\sum_{k\in J}\sum_{l\in J^c\setminus J_{p_1}^*}[p_1kl]_i\frac{y_{ki}y_{p_1i}}{y_{li}}
-2\sum_{k\in J}\sum_{l\in J^c\setminus J_{p_1}^*}[p_1kl]_i\frac{y_{ki}y_{li}}{y_{p_1i}}\to 0,\qquad i\to\infty.
\end{align*}
In light of~\eqref{ratio_same_module} and assertion~(2), which we have already proven, this implies
\begin{align*}
\sum_{k\in J}\sum_{l\in J_{p_1}^*}y_k[p_1kl](\gamma_{p_1l}-\gamma_{lp_1})=0.
\end{align*}
Because $p_0\in J_{p_1}^*$, there exists $k_1\in J$ such that~$[k_1p_1p_0]>0$. Therefore, the above sum contains the term
\begin{align*}
y_{k_1}[k_1p_1p_0](\gamma_{p_1p_0}-\gamma_{p_0p_1})=y_{k_1}[k_1p_1p_0]\big(\gamma_{p_1p_0}-\tfrac1{\gamma_{p_1p_0}}\big)<0.
\end{align*}
Since this sum equals zero, there exist $k_1'\in J$ and $p_2\in J_{p_1}^*$ such that $[k_1'p_1p_2]>0$ and
\begin{align*}
y_{k_1'}[k_1'p_1p_2](\gamma_{p_1p_2}-\gamma_{p_2p_1})=y_{k_1'}[k_1'p_1p_2]\big(\tfrac1{\gamma_{p_2p_1}}-\gamma_{p_2p_1}\big)>0.
\end{align*}
This formula implies that $\gamma_{p_2p_1}<1$. Clearly, $p_2\ne p_1$. Also, $p_2\ne p_0$ because
$$
\lim_{i\to\infty}\frac{y_{p_2i}}{y_{p_0i}}=\gamma_{p_2p_1}\gamma_{p_1p_0}<1.
$$

Using~\eqref{lim_Ric2} as above but with $j=p_2$, we obtain
\begin{align*}
\sum_{k\in J}\sum_{l\in J_{p_2}^*}y_k[p_2kl](\gamma_{p_2l}-\gamma_{lp_2})=0.
\end{align*}
It is possible to find $k_2\in J$ such that~$[k_2p_2p_1]>0$. Therefore, the above sum contains the term
\begin{align*}
y_{k_2}[k_2p_2p_1](\gamma_{p_2p_1}-\gamma_{p_1p_2})<0.
\end{align*}
Since this sum equals zero, there exist $k_2'\in J$ and $p_3\in J_{p_2}^*$ such that $[k_2'p_2p_3]>0$ and
\begin{align*}
y_{k_2'}[k_2'p_2p_3](\gamma_{p_2p_3}-\gamma_{p_3p_2})>0.
\end{align*}
We conclude that $\gamma_{p_3p_2}<1$. Evidently, $p_3\ne p_2$. We also have $p_3\ne p_1$ and $p_3\ne p_0$ because
$$
\lim_{i\to\infty}\frac{y_{p_3i}}{y_{p_1i}}=\gamma_{p_3p_2}\gamma_{p_2p_1}<1\qquad \mbox{and} \qquad \lim_{i\to\infty}\frac{y_{p_3i}}{y_{p_0i}}=\gamma_{p_3p_2}\gamma_{p_2p_1}\gamma_{p_1p_0}<1.
$$

Continuing in this way, we obtain an infinite sequence $(p_n)$ of distinct numbers in $J^c$. This is obviously impossible. Thus, $\gamma_{pq}$ cannot be less than~1.
\end{proof}

We now show that the metric $g_F$ given by~\eqref{gF_def} satisfies~\eqref{PS_nec}. The Killing form of $K$ restricted to $\fm_j$ equals $-\bar b_jQ_{|\fm_j}$ for all $j\in J$ with
\begin{align*}
\bar b_j=b_j-\frac1{d_j}\sum_{k,l\in J^c}[jkl].
\end{align*}
Using~\eqref{PS_sequence}, \eqref{RicciDiagonal} and Lemma~\ref{lem_PS_same_coord}, we see that for all $j\in J$
\begin{align*}
\lambda T_{|\fm_j}&=
\lim_{i\to\infty}\bigg(\frac{b_{ji}}2
-\frac1{2d_j} \sum_{k,l=1}^r [jkl]_i\frac{y_{li}}{y_{ki}}
+\frac1{4d_j}\sum_{k,l=1}^r[jkl]_i\frac{y_{ki}y_{li}}{y_{ji}^2}\bigg)Q_{|\fm_j}
\\
&=
\bigg(\frac{\bar b_{j}}2
-\frac1{2d_j} \sum_{k,l\in J}[jkl]\frac{y_{l}}{y_{k}}
+\frac1{4d_j}\sum_{k,l\in J}[jkl]\frac{y_{k}y_{l}}{y_{j}^2}\bigg)Q_{|\fm_j}=\Ric(g_F)_{|\fm_j}.
\end{align*}
It remains to show that the off-diagonal components of $\Ric(g_F)$ coincide with those of  $\lambda T$. 
For this, let $(e_\mu^{li})$ be a $Q$-orthonormal basis of $\fm_l^i$, and let the subscript $\fm_m^i$ denote projection onto~$\fm_m^i$.
We may assume that $(e_\mu^{li})$ converges, as $i\to\infty$, to a basis $(e_\mu^l)$ of $\fm_l$ for every~$l\in I$.

Formula \eqref{RiccOffDiag} implies that
\begin{align*}
\Ric(g_i)(e_p^{ji},e_q^{ki})&=\sum_{l,m=1}^r\Big(\frac12-\frac{y_{mi}}{2y_{li}}+\frac{y_{li}y_{mi}}{4y_{ji}y_{ki}}\Big)
\sum_{\mu=1}^{d_l}Q\big([e_p^{ji},e_\mu^{li}]_{\fm_m^i},[e_q^{ki},e_\mu^{li}]_{\fm_m^i}\big)
\end{align*}
for all $j,k\in I$ with $j\ne k$, $p=1,\ldots,d_j$ and  $q=1,\ldots,d_k$, and hence 
\begin{align}\label{lim_RicOD}
\lim_{i\to\infty}\sqrt{y_{ji}y_{ki}}\sum_{l,m=1}^r\Big(\frac12-\frac{y_{mi}}{2y_{li}}+\frac{y_{li}y_{mi}}{4y_{ji}y_{ki}}\Big)
\sum_{\mu=1}^{d_l}Q\big([e_p^{ji},e_\mu^{li}]_{\fm_m^i},[e_q^{ki},e_\mu^{li}]_{\fm_m^i}\big)=\lambda T(e_p^{j},e_q^{k})\sqrt{y_jy_k}.
\end{align}
The Cauchy--Schwarz inequality yields
\begin{align*}
\big|Q\big([e_p^{ji},e_\mu^{li}]_{\fm_m^i},[e_q^{ki},e_\mu^{li}]_{\fm_m^i}\big)\big|
&\le\big|[e_p^{ji},e_\mu^{li}]_{\fm_m^i}\big|_Q\big|[e_q^{ki},e_\mu^{li}]_{\fm_m^i}\big|_Q\notag \\
&\le\sqrt{[jlm]_i}\sqrt{[klm]_i}\le\max\{[jlm]_i,[klm]_i\}.
\end{align*}
If $j,k,l\in J$ and $m\in J^c$, then
\begin{align*}
\lim_{i\to\infty}\big|Q\big([e_p^{ji},e_\mu^{li}]_{\fm_m^i},[e_q^{ki},e_\mu^{li}]_{\fm_m^i}\big)\big|
&\le\lim_{i\to\infty}\max\{[jlm]_i,[klm]_i\}=\max\{[jlm],[klm]\}=0.
\end{align*}
By Lemma~\ref{lem_PS_same_coord},
\begin{align*}
\lim_{i\to\infty}\frac{\big|Q\big([e_p^{ji},e_\mu^{mi}]_{\fm_l^i},[e_q^{ki},e_\mu^{mi}]_{\fm_l^i}\big)\big|}{y_{mi}}
&\le\lim_{i\to\infty}\frac{\max\{[jlm]_i,[klm]_i\}}{y_{mi}}=0.
\end{align*}
Also, by the same lemma, if $j,k\in J$, $l\in J^c$ and $m\in J_l$, then
\begin{align*}
\lim_{i\to\infty}\Big(\frac12-\frac{y_{mi}}{2y_{li}}\Big)=0.
\end{align*}
At the same time, if $j,k\in J$, $l\in J^c$ and $m\notin J_l$, then
\begin{align*}
\lim_{i\to\infty}\frac{\big|Q\big([e_p^{ji},e_\mu^{mi}]_{\fm_l^i},[e_q^{ki},e_\mu^{mi}]_{\fm_l^i}\big)\big|y_{li}}{y_{mi}}
&\le\lim_{i\to\infty}\frac{\max\{[jlm]_i,[klm]_i\}y_{li}}{y_{mi}}=0.
\end{align*}
Using~\eqref{lim_RicOD} along with these formulas, we find, for $j,k\in J$ with $j\ne k$, that
\begin{align*}
\lambda T(e_p^{j},e_q^{k})&=\lim_{i\to\infty}\sum_{l,m=1}^r\Big(\frac12-\frac{y_{mi}}{2y_{li}}+\frac{y_{li}y_{mi}}{4y_{ji}y_{ki}}\Big)
\sum_{r=1}^{d_l}Q\big([e_p^{ji},e_r^{li}]_{\fm_m^i},[e_q^{ki},e_r^{li}]_{\fm_m^i}\big)
\\
&=
\sum_{l,m\in J}\Big(\frac12-\frac{y_{m}}{2y_{l}}+\frac{y_{l}y_{m}}{4y_{j}y_{k}}\Big)
\sum_{r=1}^{d_l}Q\big([e_p^{j},e_r^{l}]_{\fm_m},[e_q^{k},e_r^{l}]_{\fm_m}\big)
\\
&=\Ric(g_F)(e_p^{j},e_q^{k}).
\end{align*}
Thus $\Ric(g_F)=\lambda T_{|K/H}$.
\end{proof}

Let us briefly explain how we prove the corollary in the introduction. If $(g_i)$ is a 0--Palais--Smale sequence, then \tref{thm_PSnec} implies that there exist an intermediate subgroup $K$ and a metric $g\in\M(K/H)$ with $\Ric(g)=0$. According to \cite[Theorem~7.61]{AB87}, $g$ must be flat. As discussed in~\cite[Section~1]{PZ22}, we can extend $g$ to a metric on $\bar K/H$, which is also flat (here $\bar K$ is the closure of $K$). Since $\bar K$ and $\bar K/H$ are compact, \cite[Theorem~1.84]{AB87} implies that $\bar K/H$ is a torus. 

\begin{rem} (a) The limit value $y_j$ is zero if $j\in J^c$. Thus, one could say that the metrics $g_i$ blow up the base $G/K$ in the limit. However, if we normalize these metrics appropriately, their limit on $G/K$ becomes a meaningful geometric object. More precisely, define
\begin{align*}
\tilde g_i=((\tilde y_{1i},\ldots,\tilde y_{ri}),D_i)=\Big(\Big(\frac{y_{1i}}{\min_{j\in J^c}y_{ji}},\ldots,\frac{y_{ri}}{\min_{j\in J^c}y_{ji}}\Big),D_i\Big).
\end{align*}
Passing to a subsequence if necessary, we may assume that $\tilde y_{ji}$ converges to some $\tilde y_j\in[1,\infty]$ as $i\to\infty$ whenever $j\in J^c$. Thus, the restriction of $\tilde g_i$ to $\fk^\perp$ converges to
\begin{align}\label{def_tilde_g}
\tilde g=\sum_{j\in J^c}\frac1{\tilde y_j}Q_{|\fm_j},
\end{align}
where we interpret $\frac1{\tilde y_j}$ as 0 if $\tilde y_j=\infty$. Lemma~\ref{lem_PS_same_coord} implies that~(\ref{def_tilde_g}) defines a positive-semi-definite $K$-invariant tensor field on~$G/K$. However, this restriction may fail to be  positive definite.

(b) The existence of the global maximum in \tref{Ex_max} is also a consequence of the characterization of Palais--Smale sequences in \tref{main_1}. Indeed, if $\beta_\fk-\alpha_\fk>0$ at the highest level~$\alpha_\fk$, and $\alpha_\fk$ is assumed, then metrics $g$ with $S(g)>\alpha_\fk$ exist along a canonical variation, as follows from~\eqref{scal_g}. Such a  subalgebra, where $\alpha_\fk$ is assumed, exists due to  \cite[Proposition~3.7]{PZ22}. Therefore, the set $\{g\in\M_T\mid S(g)>\alpha_\fk+\e\}$ is nonempty for some $\e>0$, and Theorem~\ref{main_1} implies that this set is compact.
\end{rem}

In order to use mountain pass techniques, we need to show that $S^{-1}((c,\infty))\cap\M_T$ has two connected components for certain values $c\in\mathbb R$. We expect this to happen if $c$ is slightly smaller than $\alpha_\fk$ when $\beta_\fk-\alpha_\fk<0$. The following proposition proves this under additional assumptions.

\begin{prop}\label{neg_der}
Consider a compact homogeneous space $G/H$ such that the modules $\fm_i$ are pairwise inequivalent. Let $K$ be an intermediate subgroup with Lie algebra~$\fk$ such that $G/K$ is isotropy irreducible. Assume that $\alpha_{\fk'}<\alpha_\fk$ for every intermediate subalgebra $\fk'$ contained in $\fk$ as a proper subset. Furthermore, assume that $\beta_\fk-\alpha_\fk<0$. Given $\delta>0$, there exist $\delta_-,\delta_+,\epsilon>0$ such that $\delta_-<\delta_+\le\delta$ and $S(y)\le\alpha_{\fk}-\epsilon$ for all $y\in T_{\delta_+}(\Delta_\fk)\setminus T_{\delta_-}(\Delta_\fk)$.
\end{prop}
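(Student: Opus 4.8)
The plan is to analyze the scalar curvature $S$ on the tubular neighborhood $T_{\delta}(\Delta_\fk)$ using the canonical-variation formula \eqref{scal_g}, which governs the behavior of $S$ as one moves away from the stratum $\Delta_\fk$. Since $G/K$ is isotropy irreducible, the metric on the base is determined up to scaling, so a metric in $T_\delta(\Delta_\fk)$ is, to leading order, a canonical variation of a fiber metric $g_F \in \M_T(K/H)$ together with the single base-scaling parameter $t$. By \eqref{scal_g}, along such a variation we have
\begin{align*}
S(g_t) = \frac{S_F}{T_1^*} + T_2^*\Big(\frac{S_B}{T_2^*} - \frac{S_F}{T_1^*}\Big)t - \frac{t^2 T_1^*}{1 - tT_2^*}|A|_g,
\end{align*}
and as $t \to 0$ the value tends to $S_F/T_1^*$, while the first derivative tends to $T_2^*(S_B/T_2^* - S_F/T_1^*)$. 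The strategy is to show that near the part of $\overline{\Delta_\fk}$ where $S_F/T_1^*$ is close to its supremum $\alpha_\fk$, the linear term is \emph{strictly negative}, so $S$ drops below $\alpha_\fk$ as soon as $t$ is bounded away from $0$; and away from that part, $S_F/T_1^* \le \alpha_\fk - c$ already for some $c>0$ by the assumption $\alpha_{\fk'} < \alpha_\fk$ for proper intermediate subalgebras $\fk' \subset \fk$.

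First I would make this dichotomy precise. Let $A_\fk = \sup\{S(g) \mid g \in \M_T(K/H)\} = \alpha_\fk$. By the hypothesis that no proper intermediate subalgebra $\fk'\subsetneq\fk$ has $\alpha_{\fk'} = \alpha_\fk$, together with \pref{prop_max_est} applied to the fiber space $K/H$, there exists $\rho>0$ and $c_0>0$ such that $S_F \le (\alpha_\fk - c_0)T_1^*$ for all fiber metrics $g_F$ lying in the $\rho$-neighborhood of $\partial\Delta^{K/H}$; equivalently, the set $\{g_F \in \M_T(K/H) \mid S(g_F) \ge \alpha_\fk - c_0\}$ is compact, contained in the interior. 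On this compact set, the derivative limit $T_2^*(S_B/T_2^* - S_F/T_1^*)$ is a continuous function, and I claim it is strictly negative there: indeed $S_F/T_1^* \ge \alpha_\fk - c_0$ can be taken close to $\alpha_\fk = \sup_{g_F} S_F/T_1^*$, while $S_B/T_2^* \le \beta_\fk < \alpha_\fk$ by definition of $\beta_\fk$ and the isotropy-irreducibility of $G/K$ (which pins down the base metric and hence $S_B/T_2^*$). So after shrinking $c_0$ we get a uniform bound: the derivative limit is $\le -2\eta < 0$ for all $g_F$ in that compact set. By compactness and continuity of the remainder term in \eqref{scal_g}, there is $t_0 > 0$ such that for $0 < t \le t_0$ and all $g_F$ in the compact set, $S(g_t) \le \alpha_\fk - \eta t$.

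The remaining step is to convert these canonical-variation estimates into an estimate on the genuine tubular neighborhoods $T_{\delta_+}(\Delta_\fk) \setminus T_{\delta_-}(\Delta_\fk)$, which contain metrics that are not exactly canonical variations. Here I would use that $G/K$ isotropy irreducible and the $\fm_i$ pairwise inequivalent force every $y \in T_\delta(\Delta_\fk)$ to be genuinely diagonal, and a metric in $T_\delta(\Delta_\fk)$ with small $\delta$ differs from the canonical variation $g_t$ (with $g_F$ the projection of its fiber part and $t = \max_{j\in J^c} y_j$, say) by a controlled amount; the scalar curvature is continuous and the estimates above are uniform, so for $\delta$ small enough the same bound $S(y) \le \alpha_\fk - \tfrac{\eta}{2}t$ persists, where $t \asymp \max_{j \in J^c}y_j$. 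Choosing $\delta_+ \le \min(\delta, t_0)$ small, $\delta_-$ a small fraction of $\delta_+$, and $\epsilon = \tfrac{\eta}{2}\delta_-$, we get $S(y) \le \alpha_\fk - \epsilon$ on the annular region $T_{\delta_+}(\Delta_\fk)\setminus T_{\delta_-}(\Delta_\fk)$ (where $t \ge \delta_-$), while on the complementary near-boundary part of the fiber we invoke the $\alpha_{\fk'} < \alpha_\fk$ bound via \pref{prop_max_est}, and take the minimum of the two $\epsilon$'s. The main obstacle I anticipate is the last step: controlling the error between a genuine metric in the tube and the model canonical variation uniformly in all directions $y_j$, $j \in J^c$, rather than just a single scaling parameter — one must check that the higher-order cross terms in the scalar curvature \eqref{scalar_y}, coming from structure constants $[jkl]$ with mixed indices in $J$ and $J^c$, do not overwhelm the negative linear term; here the pairwise-inequivalence and isotropy-irreducibility hypotheses, plus \pref{prop_max_est}'s uniformity in $D$, are exactly what is needed to keep these terms in line.
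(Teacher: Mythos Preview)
Your overall architecture is sound, and the paper also splits according to whether the fiber coordinates are ``deep inside'' $\Delta^{K/H}$ (where one exploits $\beta_\fk<\alpha_\fk$) or near its boundary (where one invokes $\alpha_{\fk'}<\alpha_\fk$ via \pref{prop_max_est}). However, the step you flag as the ``main obstacle'' is a genuine gap, and your sketch of how to close it does not work.

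The issue is that a metric $y\in T_\delta(\Delta_\fk)$ is \emph{not} close to a canonical variation in any useful sense. Isotropy irreducibility of $G/K$ says $K$-invariant metrics on $G/K$ are multiples of $Q_{|\fk^\perp}$, but the $G$-invariant metric $y$ restricts to $\fk^\perp$ as $\sum_{j\in J^c}\tfrac1{y_j}Q_{|\fm_j}$ with \emph{independent} $y_j$; the ratios $y_j/y_k$ for $j,k\in J^c$ can be arbitrary even when $\max_j y_j$ is tiny. The cross terms you mention, namely $-\tfrac14\sum_{i\in J}\sum_{j,k\in J^c}y_i[ijk]\big(\tfrac{y_j}{y_k}+\tfrac{y_k}{y_j}-2\big)$, are not higher order: they scale like $y_i\sim 1$ times a quantity that can blow up. Continuity of $S$ on $\Delta$ gives nothing here, because the ``model'' canonical variation and the actual metric $y$ do not become close as $\delta\to0$.

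The paper resolves this with a second dichotomy on the \emph{base} coordinates rather than the fiber metric. Set $y_p=\min_{j\in J^c}y_j$ and $y_q=\max_{j\in J^c}y_j$. If $\tfrac{y_q}{y_p}-1\le\Theta$ for a threshold $\Theta$ proportional to $\alpha_\fk-\beta_\fk$, then $y_{|\fk^\perp}$ is close enough to $K$-invariant that one can compare directly with $\beta_\fk$ and obtain $S(y)\le\alpha_\fk-(\alpha_\fk-\beta_\fk)\min_id_iT_i\,(\delta_--\tfrac{\delta_+}2)$. If $\tfrac{y_q}{y_p}-1>\Theta$, the large ratio is turned into a \emph{gain}: isotropy irreducibility of $G/K$ yields a chain of nonzero structure constants $[j_1pp_1][j_2p_1p_2]\cdots[j_\mu p_{\mu-1}q]$ with each $j_l\in J$, so some single link has ratio at least $(1+\Theta)^{1/|J^c|}$, and the corresponding cross term $-\tfrac14 y_m[muv]\big(\tfrac{y_u}{y_v}+\tfrac{y_v}{y_u}-2\big)$ is bounded above by a negative constant times $y_m$. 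On the compact part of the fiber one has $y_m\ge\delta_0$, and this negative contribution dominates the positive base terms $\tfrac12\sum_{i\in J^c}d_ib_iy_i\le C\delta_+$ once $\delta_+$ is small. Your fiber dichotomy enters only here, to ensure $y_m\ge\delta_0$ or else conclude by $\alpha_{\fk'}<\alpha_\fk$.
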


\begin{proof}
Take $y=(y_1,\ldots,y_r)\in\Delta$ and let $\Delta_\fk=(\Delta_J,\fk)$. We have
\begin{align}\label{est_fla1}
	S(y)&=S(y_{|\fk\cap\fm})+\frac12\sum_{i\in J^c}d_ib_iy_i-\frac14\sum_{i,j,k\in J^c} [ijk]\frac{y_iy_j}{y_k} \notag
	\\
	&\hphantom{=}~-\frac14\sum_{i\in J}\sum_{j,k\in J^c}[ijk]\Big(\frac{y_ky_j}{y_i}+y_i\Big(\frac{y_j}{y_k}+\frac{y_k}{y_j}-2\Big)\Big) \notag
	\\
	&\le\alpha_\fk \tr_{y_{|\fk\cap\fm}}T_{|\fk\cap\fm}+\frac12\sum_{i\in J^c}d_ib_iy_i
	-\frac14\sum_{i,j,k\in J^c} [ijk]\frac{y_iy_j}{y_k} \notag
	\\
	&\hphantom{=}~
	-\frac14\sum_{i\in J}\sum_{j,k\in J^c}y_i[ijk]\Big(\frac{y_j}{y_k}+\frac{y_k}{y_j}-2\Big).
	\end{align}
Choose $p,q\in J^c$ so that
	\begin{align*}
	y_p=\min_{i\in J^c}y_i\qquad\mbox{and}\qquad y_q=\max_{i\in J^c}y_i.
	\end{align*}
If $y_p=y_q$, then $y_i$ are the same for all $i\in J^c$. In this case, the restriction $y_{|\fk^\perp}$ defines a $K$-invariant metric on~$G/K$ with scalar curvature
\begin{align*}
S(y_{|\fk^\perp})=\frac12\sum_{i\in J^c}d_ib_iy_i
-\frac14\sum_{i,j,k\in J^c} [ijk]\frac{y_iy_j}{y_k}=\sum_{i\in J^c}y_p\Big(\frac{d_ib_i}2
-\frac14\sum_{j,k\in J^c}[ijk]\Big).
\end{align*}
Moreover,
$$
S\big(y_{|\fk^\perp}\big)=\beta_\fk\tr_{y_{|\fk^\perp}}T_{|\fk^\perp}=\beta_\fk\sum_{i\in J^c}d_iT_iy_p.
$$

The equality $y_p=y_q$ does not hold in general. One may view the quantity $\frac{y_q}{y_p}-1$ as a measure of the deviation of $y_{|\fk^\perp}$ from being a $K$-invariant metric on~$G/K$. First, we consider the situation where this deviation is small. More precisely, suppose that
	\begin{align*}
	\frac{y_q}{y_p}-1\le\Theta,\qquad \mbox{where}\qquad \Theta=\frac{\alpha_\fk-\beta_\fk}{\sum_{i\in J^c}d_ib_i+1}\min_{i\in J^c}d_iT_i>0.
	\end{align*}
Since $G/K$ is isotropy irreducible,
\begin{align*}
	S(y)&\le \alpha_\fk \tr_{y_{|\fk\cap\fm}}T_{|\fk\cap\fm}+\frac12\sum_{i\in J^c}d_ib_iy_i
	-\frac18\sum_{i,j,k\in J^c}y_i[ijk]\Big(\frac{y_j}{y_k}+\frac{y_k}{y_j}\Big)
	\\
	&\le \alpha_\fk \tr_{y_{|\fk\cap\fm}}T_{|\fk\cap\fm}+\sum_{i\in J^c}y_p\Big(\frac{d_ib_i}2
	-\frac14\sum_{j,k\in J^c}[ijk]\Big)+\sum_{i\in J^c}(y_i-y_p)\Big(\frac{d_ib_i}2
	-\frac14\sum_{j,k\in J^c}[ijk]\Big)
	\\
	&=\alpha_\fk \tr_{y_{|\fk\cap\fm}}T_{|\fk\cap\fm}+\beta_\fk \sum_{i\in J^c}d_iT_iy_p+\sum_{i\in J^c}y_p\Big(\frac{y_i}{y_p}-1\Big)\Big(\frac{d_ib_i}2
	-\frac14\sum_{j,k\in J^c}[ijk]\Big)
	\\
	&\le \alpha_\fk \tr_{y_{|\fk\cap\fm}}T_{|\fk\cap\fm}+\beta_\fk\tr_{y_{|\fk^\perp}}T_{|\fk^\perp}+\sum_{i\in J^c}\frac{d_ib_iy_p}2\Big(\frac{y_i}{y_p}-1\Big).
	\end{align*}
If $y\in T_{\delta_+}(\Delta_\fk)\setminus T_{\delta_-}(\Delta_\fk)$ for some $\delta_-,\delta_+>0$ such that $\delta_-<\delta_+$, then $\delta_-<y_q\le\delta_+$ and
	\begin{align}\label{est_fla2}
	S(y)&\le \alpha_\fk - (\alpha_\fk-\beta_\fk)\tr_{y_{|\fk^\perp}}T_{|\fk^\perp}+\sum_{i\in J^c}\frac{d_ib_iy_p}2\Big(\frac{y_q}{y_p}-1\Big) \notag
	\\
	&\le \alpha_\fk - (\alpha_\fk-\beta_\fk)d_qT_qy_q+\frac12\sum_{i\in J^c}\Theta d_ib_iy_p \notag
	\\
	&\le \alpha_\fk - (\alpha_\fk-\beta_\fk)d_qT_q\delta_-+\frac{\alpha_\fk-\beta_\fk}2\min_{i\in J^c}d_iT_i\,\delta_+ \notag
	\\
	&\le \alpha_\fk - (\alpha_\fk-\beta_\fk)\min_{i\in J^c}d_iT_i\Big(\delta_--\frac{\delta_+}2\Big).
	\end{align}
Setting $\delta_-=\frac{3\delta+}4$, we conclude that	
	\begin{align}\label{aux_negder}
	S(y)&\le \alpha_\fk - \frac{\delta_+}4(\alpha_\fk-\beta_\fk)\min_{i\in J^c}d_iT_i.
	\end{align}
	
Now, let us consider the situation where the deviation of $y_{|\fk^\perp}$ from being a $K$-invariant metric on $G/K$ is large. More precisely, suppose that
	\begin{align*}
	\frac{y_q}{y_p}-1>\Theta.
	\end{align*}
Because $G/K$ is isotropy irreducible, there exist 
finite sequences $j_1,\ldots,j_\mu\in J$ and $p_1,\ldots,p_{\mu-1}\in J^c$ such that
\begin{align*}
[j_1pp_1][j_2p_1p_2][j_3p_2p_3]\cdots[j_{\mu-1}p_{\mu-2}p_{\mu-1}][j_\mu p_{\mu-1}q]>0.
\end{align*}
We may assume that $p,p_1,\ldots,p_{\mu-1}$ are pairwise distinct.
The formula
\begin{align*}
\frac{y_q}{y_{p_{\mu-1}}}\cdot\frac{y_{p_{\mu-1}}}{y_{p_{\mu-2}}}\cdot\,\cdots\,\cdot\frac{y_{p_2}}{y_{p_1}}\cdot\frac{y_{p_1}}{y_p}=\frac{y_q}{y_p}>1+\Theta
\end{align*}
implies that we can find $m\in J$ and $u,v\in J^c$ satisfying
\begin{align*}
[muv]>0\qquad\mbox{and}\qquad \frac{y_v}{y_u}-1>(1+\Theta)^{1/\mu}-1\ge\Theta_*,
\end{align*}
where $\Theta_*=(1+\Theta)^{1/|J^c|}-1$.
If $y\in T_{\delta_+}(\Delta_\fk)\setminus T_{\delta_-}(\Delta_\fk)$, then
\begin{align}\label{est_fla3}
S(y)&\le\alpha_\fk \tr_{y_{|\fk\cap\fm}}T_{|\fk\cap\fm}+\frac12\sum_{i\in J^c}d_ib_iy_i
-\frac14\sum_{i,j,k\in J^c}[ijk]\frac{y_iy_j}{y_k} \notag
\\
&\hphantom{=}~-\frac14\sum_{i\in J}\sum_{j,k\in J^c}y_i[ijk]\Big(\frac{y_j}{y_k}+\frac{y_k}{y_j}-2\Big) \notag
\\
&\le\alpha_\fk+\frac12\sum_{i\in J^c}d_ib_iy_i-\frac14y_m[muv]\Big(\frac{y_u}{y_v}+\frac{y_v}{y_u}-2\Big) \notag
\\
&\le\alpha_\fk+\frac{\delta_+}2\sum_{i\in J^c}d_ib_i-y_m\frac{\theta\Theta_*^2}{4(1+\Theta_*)}
\end{align}
with $\theta=\min\{[ijk]\,|\,[ijk]>0\}$. Proposition~\ref{limit_subalgebra}, together with the hypothesis that $\alpha_{\fk'}<\alpha_\fk$ for $\fk'\subset\fk$,  yields the existence of $\delta_0,\epsilon_0>0$ such that $S(y)<\alpha_\fk-\epsilon_0$ whenever $\min_{i\in J}y_i<\delta_0$.  On the other hand, if 
\begin{align*}
\min_{i\in J}y_i\ge\delta_0\qquad\mbox{and}\qquad\delta_+\le\frac{\delta_0\theta\Theta_*^2}{4(1+\Theta_*)\big(\sum_{i\in J^c}d_ib_i+1\big)},
\end{align*}
then
\begin{align*}
S(y)\le\alpha_\fk-\delta_0\frac{\theta\Theta_*^2}{8(1+\Theta_*)}.
\end{align*}
Recalling~\eqref{aux_negder}, we conclude that the assertion of Proposition~\ref{neg_der} holds if $\delta_+$ is sufficiently small, $\delta_-=\frac{3\delta_+}4$, and
\begin{align*}
\epsilon=\min\bigg\{\frac{\delta_+}4(\alpha_\fk-\beta_\fk)\min_{i\in J^c}d_iT_i,\epsilon_0,\frac{\delta_0\theta\Theta_*^2}{8(1+\Theta_*)}\bigg\}.
\end{align*}
\end{proof}

\begin{rem}
We do not know if \pref{neg_der} holds without the assumption that $G/K$ is isotropy irreducible, or that the summands are inequivalent. As we will see in Lemma~\ref{neg_der_2}, one can remove the former assumption at least in some cases. A generalization of \pref{neg_der} would be necessary if one wanted to develop a general theory for the existence of saddle points, as was done in the Einstein case.
\end{rem}

We end this section with a few observations concerning degenerate critical points of the scalar curvature functional. Denote by $\mathcal T$  the space of $G$-invariant symmetric (0,2)-tensor fields on~$M$. We can regard the Ricci curvature as a map $\Ric\colon\M\to\mathcal T$.

\begin{prop}\label{Morse}
The following statements hold true:
\begin{enumerate}
\item[\emph{(1)}] 
If $g$ is a critical point of $S_{|\M_T}$, then
$$
\Hess(S_{|\M_T})_g(X,Y)=-\ml d\Ric_g(X),Y\mr_g \qquad \text{ for all } \qquad X,Y\in T_g(\M_T).
$$

\item[\emph{(2)}] Assume that $\Ric(g)=cT$ for some $g\in \M_T$ and $c>0$. Then the metric $g$ is a non-degenerate critical point of $S_{|\M_T}$  if and only if  $d\Ric_g$ has rank $\dim\M-1$.

\item[\emph{(3)}] The map $S\colon\M_T\to \R$ is a Morse function for all positive-definite $T$ outside a set of Lebesgue measure zero in~$\mathcal T$.
\end{enumerate}
\end{prop}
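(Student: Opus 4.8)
The plan is to prove the three parts in order, since (2) uses (1) and (3) uses (2).

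\emph{Part (1).} Because $g$ is a critical point of $S_{|\M_T}$, the Hessian at $g$ is the intrinsic second derivative and can be computed by differentiating the gradient vector field along a curve in $\M_T$ through $g$ with velocity $X$ and pairing the result with $Y\in T_g\M_T$; all connection and metric-variation correction terms drop out precisely because $\grad S_{|\M_T}(g)=0$. By \eqref{gradS_compute} one has $\grad S_{|\M_T}(g')=-\Ric(g')+\mu(g')\,T$ with $\mu(g')=\ml\Ric(g'),T\mr_{g'}/\ml T,T\mr_{g'}$, and since $\M$ is open in the linear space $\mathcal T$ the curve lives in $\mathcal T$, so the derivative is $-d\Ric_g(X)+(d\mu_g(X))\,T$. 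Pairing with $Y\in T_g\M_T$ annihilates the $T$-term because $\ml T,Y\mr_g=0$, giving $\Hess(S_{|\M_T})_g(X,Y)=-\ml d\Ric_g(X),Y\mr_g$.

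\emph{Part (2).} Two elementary observations are needed. First, scale-invariance $\Ric(\lambda g)=\Ric(g)$ yields $d\Ric_g(g)=0$, so $\R g\subseteq\Ker d\Ric_g$ and $\rank d\Ric_g\le\dim\M-1$ for every $g$. Second, differentiating $S(g')=\tr_{g'}\Ric(g')$ in two ways --- by the product rule, which produces $-\ml X,\Ric(g)\mr_g+\tr_g(d\Ric_g(X))$, and via $\grad S(g)=-\Ric(g)$, which gives $\ml -\Ric(g),X\mr_g$ --- yields the identity $\tr_g\bigl(d\Ric_g(X)\bigr)=0$ for all $X\in T_g\M$. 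Now apply Part (1): $g$ is non-degenerate iff the bilinear form $-\ml d\Ric_g(\cdot),\cdot\mr_g$ is non-degenerate on $T_g\M_T$, i.e. iff the only $X\in T_g\M_T$ with $d\Ric_g(X)$ orthogonal to $T_g\M_T$ is $X=0$. Since $T_g\M_T=T^{\perp_g}$, its $g$-orthogonal complement is $\R T$; but any element of $\R T$ lying in the image of $d\Ric_g$ must vanish by the trace identity (as $\tr_gT=1\ne0$), so the condition becomes $\Ker d\Ric_g\cap T_g\M_T=0$. Because $\R g\subseteq\Ker d\Ric_g$ while $g\notin T_g\M_T$ (indeed $\ml g,T\mr_g=\tr_gT=1$), a dimension count shows $\Ker d\Ric_g\cap T_g\M_T=0$ iff $\Ker d\Ric_g=\R g$ iff $\rank d\Ric_g=\dim\M-1$.

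\emph{Part (3).} Since $G/H$ is not a torus, no $g\in\M$ is Ricci-flat, so the real-analytic map $\rho\colon\M\to\mathbb P(\mathcal T)$, $\rho(g)=[\Ric(g)]$, is everywhere defined, and $\dim\mathbb P(\mathcal T)=\dim\M-1$. As $\rho(\lambda g)=\rho(g)$, the critical set $\mathrm{Crit}(\rho)=\{g:d\rho_g\text{ not surjective}\}$ is a cone. If $\Ric(g)=cT$ --- with $c=S(g)>0$ automatically, since a compact homogeneous space admits no metric with $\Ric\le 0$ unless flat --- then $\R\Ric(g)=\R T$ meets $\im d\Ric_g$ only in $0$ by the trace identity, hence $\Ker d\rho_g=\Ker d\Ric_g$; combined with Part (2) this says that a metric $g'\in\M_T$ with $\Ric(g')\parallel T$ is a \emph{degenerate} critical point of $S_{|\M_T}$ exactly when $g'\in\mathrm{Crit}(\rho)$. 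Consequently $S_{|\M_T}$ is a Morse function iff $[T]$ is not a critical value of $\rho$. By Sard's theorem the set of critical values of $\rho$ has measure zero in $\mathbb P(\mathcal T)$; pulling it back along the submersion $\mathcal T\setminus\{0\}\to\mathbb P(\mathcal T)$ shows that $\{T\in\mathcal T:S_{|\M_T}\text{ is not Morse}\}$ has Lebesgue measure zero in $\mathcal T$, and this persists after intersecting with the open cone of positive-definite tensors. The main obstacle is Part (2): identifying the $g$-orthogonal complement of $T_g\M_T$ as $\R T$ and establishing $\tr_g\circ d\Ric_g=0$ are exactly what converts the analytic non-degeneracy of $\Hess(S_{|\M_T})_g$ into the linear-algebraic condition $\rank d\Ric_g=\dim\M-1$; once this is in hand, Part (3) is a routine Sard argument, the only point requiring care being that ``critical point of $\rho$'' means $\rank d\rho_g\le\dim\M-2$ and that $\mathrm{Crit}(\rho)$ is scaling-invariant.
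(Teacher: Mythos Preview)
Your argument is correct and follows essentially the same route as the paper: both prove (1) by differentiating the gradient/differential of $S$ and using that the correction terms vanish at a critical point, both deduce (2) from (1) together with $d\Ric_g(g)=0$ and the transversality of the scaling direction to $\M_T$, and both obtain (3) from Sard's theorem applied to the projectivized Ricci map $\M/\mathbb R_+\to\mathcal T/\mathbb R^*$.

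The one noteworthy difference is in Part~(2). The paper's proof is terse: it records $d\Ric_g(g)=0$ and the transversality of $\{\lambda g\}$ to $\M_T$, then says ``in light of statement~(1), this implies statement~(2)''. Implicitly this relies on the symmetry of the bilinear form $B(X,Y)=-\langle d\Ric_g(X),Y\rangle_g$ on all of $T_g\M$ (it is a Hessian for a torsion-free connection), so that $B(g,\cdot)=0$ forces $B(\cdot,g)=0$ and hence any $X\in T_g\M_T$ in the radical of $B_{|T_g\M_T}$ already lies in $\Ker d\Ric_g$. You instead make this step explicit and self-contained via the trace identity $\tr_g\!\big(d\Ric_g(X)\big)=0$, which directly shows that $d\Ric_g(X)\in\R T$ implies $d\Ric_g(X)=0$. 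Your derivation of the trace identity (comparing the two expressions for $dS_g(X)$) is correct, and this makes the passage from analytic non-degeneracy to the rank condition more transparent than the paper's version; the same identity then does double duty in Part~(3) when you verify $\Ker d\rho_g=\Ker d\Ric_g$.
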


\begin{proof}
We start with the proof of statement~(1). The well-known identity $dS_g(X)=-\ml \Ric(g), X\mr_g$ implies
\begin{align*}
\Hess(S)_g(X,Y)&=\nabla_X(dS_g)(Y)
\\
&=\nabla_X(dS_g(Y))-dS_g(\nabla_XY)
=-\nabla_X\ml\Ric(g),Y\mr_g+\ml\Ric(g),\nabla_XY\mr_g
\\
&=-\ml\nabla_X\Ric(g),Y\mr_g-\ml\Ric(g),\nabla_XY\mr_g+\ml\Ric(g),\nabla_XY\mr_g=-\ml\nabla_X\Ric(g),Y\mr_g.
\end{align*}
On the right-hand side, the covariant derivative $\nabla$ is taken with respect to the metric induced by $g$ on the tensor bundle of~$\mathcal T$, and hence $\nabla_X\Ric(g)=d\Ric_g(X)$.

Next, we prove statement~(2). Observe that $d\Ric_g(g)=0$ since $\Ric(\lambda g)=\Ric(g)$ for all $\lambda>0$. Furthermore, the ray $\{\lambda g\,|\,\lambda>0\}$ is transverse to~$\M_T$ since $S(g)=\pro{g}{\Ric(g)}_g$ and $S(g)=c>0$.  In light of statement~(1), this implies statement~(2).

Finally, consider the smooth manifolds
$$
\widetilde{\M} =\M/\{g\sim \lambda g \text{ for all } \lambda>0\}\qquad\mbox{and}\qquad \widetilde{\mathcal T} =\mathcal T/\{h\sim \lambda h \text{ for all } \lambda\in\mathbb R\setminus\{0\}\}.
$$
Denote by $[g]$ and $[T]$ the corresponding equivalence classes.
The map $\Ric$ induces a map $\widetilde{\Ric}:\widetilde\M\to\widetilde{\mathcal T}$.
Clearly, $[T]$ is a regular value of $\widetilde{\Ric}$ if and only if $d\,\widetilde{\Ric}_{[g]}$ is invertible for all metrics $g$ satisfying $\Ric(g)=cT$. If $d\,\widetilde{\Ric}_{[g]}$ is invertible, then $d\Ric_g$ has rank $\dim\M-1$, and hence $g$ is a non-degenerate critical point by statement~(2). Therefore, part (3) follows from Sard's theorem.
\end{proof}

\begin{rem}\label{Ricci_immersion}
(a)
The set of metrics $g$ with $\rank (d\Ric_g)<\dim \M-1$ is a semi-algebraic set $\mathcal S\subset\M$. As is well-known,  such a set has only finitely many components and is stratified by smooth algebraic varieties.  Since $\Ric\colon \M_T\setminus(\M_T\cap\mathcal S)\to \mathcal T  $  is an immersion,  the image of the Ricci map is thus the union of finitely many smooth connected hypersurfaces meeting along $\Ric(\mathcal S)$; see  Figures~\ref{Wallach-Ric-image} and~\ref{fig_sing_curves} for two examples. It would be interesting to know if these hypersurfaces  are embedded.

(b) In \cite{LWa} it was shown that  metrics $g$ with $\rank(d\Ric_g)=\dim \M-1$ and $S(g)\ne 0$ are  Ricci locally invertible,  which means that there exists a neighborhood $V$ of $\Ric(g)$ such that every $T\in V$ admits a metric $g'$ in a neighborhood of $g$ satisfying $\Ric(g')=cT$ for some constant~$c$. Moreover, the pair $(g',c)$ is unique up to scaling of~$g'$. It was also shown that rays through the origin in $\mathcal T$ meet the hypersurfaces $\Ric(\M\setminus\mathcal S)$ transversely. 

\end{rem}
\bigskip

\section{Saddle points on generalized Wallach spaces}\label{Wallach_saddle}
\smallskip
In this section, we prove the existence of saddle points of the functional~$S_{|\M_T}$ for a large class of Ricci candidates $T$ on generalized Wallach spaces. Recall that $G/H$ is called a \emph{generalized Wallach space} if $\fm$ splits into three irreducible modules
\begin{equation}\label{dec_Wallach}
	\fm=\fm_1\oplus\fm_2\oplus\fm_3
\end{equation}
such that $[123]$ is, up to permutations, the only non-vanishing structure constant. This implies, in particular, that we have precisely three intermediate subgroups $K_i$  with Lie algebras $\fk_i=\fh\oplus\fm_i$, where $i=1,2,3$. Furthermore, the homogeneous spaces $G/K_i$ and $K_i/H$ are all isotropy irreducible. We assume in this section that the modules $\fm_i$ are inequivalent, in particular, there exists only one decomposition $D$ up to the order of summands. There are some examples where this is not satisfied, but as we explain at the end of this section, our arguments yield results in the general case as well.

\begin{proof}[Proof of Theorem~\ref{main_2}]

Since $G/K_i$ and $K_i/H$ are isotropy irreducible,	one can easily compute the constants $\alpha_{\fk_i}$ and $\beta_{\fk_i}$ explicitly. They are given by 
\begin{align}\label{alpha_beta_Wallach}
\alpha_{\fk_i}=\frac{d_i-2[123]}{2d_iT_i} \qquad\mbox{and}\qquad \beta_{\fk_i}=\frac{d_j+d_k}{2(d_jT_j+d_kT_k)},
\end{align}
where $(i,j,k)$ is a permutation of $\{1,2,3\}$; cf.~\cite[Section~4]{AP19}.

The simplex $\Delta$ associated with the decomposition~\eqref{dec_Wallach} is two-dimensional, with vertices
$$
V_1=\Big(\frac1{d_1T_1},0,0\Big),\qquad V_2=\Big(0,\frac1{d_2T_2},0\Big) \qquad\mbox{and}\qquad V_3=\Big(0,0,\frac1{d_3T_3}\Big).
$$
These vertices are the only subalgebra strata
 $\Delta_{\fk_1}$, $\Delta_{\fk_2}$ and~$\Delta_{\fk_3}$. The metric
\begin{equation*}
g=\tfrac1{y_1}Q_{|\fm_1}+\tfrac1{y_2}Q_{|\fm_2}+\tfrac1{y_3}Q_{|\fm_3}
\end{equation*}
lies in $\M_T$ if and only if
\begin{equation*}
\tr_gT=d_1T_1y_1+d_2T_2y_2+d_3T_3y_3=1.
\end{equation*}
Its scalar curvature satisfies
\begin{align*}
S(g)=\frac{d_1}2y_1+\frac{d_2}2y_2+\frac{d_3}2y_3-\frac{[123]}2\Big(\frac{y_1y_2}{y_3}+\frac{y_2y_3}{y_1}+\frac{y_1y_3}{y_2}\Big).
\end{align*}
Without loss of generality, assume that $\beta_{\fk_i}-\alpha_{\fk_i}<0$ for $i=1,2$ and that $\alpha_{\fk_1}\ge\alpha_{\fk_2}$. One easily sees that this implies $\alpha_{\fk_2}>\alpha_{\fk_3}$ and that $\beta_{\fk_3}-\alpha_{\fk_3}>0$; see Figure~\ref{Wallach_critical} for some typical configurations.

Since $K_i/H$ are isotropy irreducible, $\alpha_{\fk_i}$ are the only critical levels at infinity. \tref{main_1} implies that the scalar curvature functional satisfies the Palais--Smale condition on $S^{-1}((a,b))\cap \M_T$ if $\alpha_{\fk_i}\notin [a,b]$ for all $i$.

We will use a mountain pass argument to prove the existence of a critical point for~$S_{|\M_T}$. For this we choose a curve $\gamma\colon\R\to\Delta$ such that $\gamma((-\infty,0])$
is the canonical variation converging to~$V_1$ and $\gamma([0,\infty))$ the canonical variation converging to~$V_2$.
These two canonical variations meet at the center $y_0=(w,w,w)$ of $\Delta$ with $w=1/(d_1T_1+d_2T_2+d_3T_3)$. Since $\beta_{\fk_i}-\alpha_{\fk_i}<0$ for $i=1,2$, one easily sees that
\begin{align*}
S(y_0)=\frac{d_1+d_2+d_3-3[123]}{2(d_1T_1+d_2T_2+d_3T_3)}>\alpha_{\fk_3}.
\end{align*}
Formula~\eqref{scal_g} implies that for a subgroup $K$ with $\beta_{\fk}-\alpha_{\fk}<0$ the canonical variation has strictly monotone scalar curvature. This means that $\inf_{t\in\R}S(\gamma(t))>\alpha_{\fk_3}$.

Let $\phi_s$ be the gradient flow of the functional $S_{|\M_T}$. Applying $\phi_s$ to $\gamma$, we obtain a family of paths $\gamma_s(t)=\phi_s(\gamma(t))$. Let
$$
c=\sup_{s\ge0}\inf_{t\in\R}S(\gamma_s(t)).
$$
Since the gradient flow increases the scalar curvature, we have
$$c\ge\inf_{t\in\R}S(\gamma(t))>\alpha_{\fk_3}.$$
We now claim that $c<\alpha_{\fk_2}$. To prove this, use Proposition~\ref{neg_der} to obtain $\e>0$ such that $S(y)<\alpha_{\fk_2}-\epsilon$ as long as $y$ lies in the difference of tubular neighbourhoods $T_{\delta_+}(\Delta_{\fk_2})\setminus T_{\delta_-}(\Delta_{\fk_2})$. We may assume that $\delta_+$ is small enough to ensure that the closure of $T_{\delta_+}(\Delta_{\fk_2})$ does not contain~$\Delta_{\fk_1}$. Along the canonical variations, we have
$$
\lim_{t\to-\infty}S(\gamma(t))=\alpha_{\fk_1}\ge\alpha_{\fk_2}\qquad\mbox{and}\qquad\lim_{t\to\infty}S(\gamma(t))=\alpha_{\fk_2}.
$$
This implies that, for sufficiently small~$\e$, the set $\{y\in\Delta\mid S(y)>\alpha_{\fk_2}-\e\}$ has at least two connected components. One of these components is contained in $T_{\delta_-}(\Delta_{\fk_2})$ and another in the complement $\Delta\setminus T_{\delta_+}(\Delta_{\fk_2})$. Each of the curves $\gamma_s$ connects them. This means that each $\gamma_s$ must pass through $T_{\delta_+}(\Delta_{\fk_2})\setminus T_{\delta_-}(\Delta_{\fk_2})$. Thus $c\le\alpha_{\fk_2}-\e$. Making $\e$ smaller if necessary, we may assume that $c\in(\alpha_{\fk_3}+\epsilon,\alpha_{\fk_2}-\epsilon)$.

Since  $S_{\M_T}$ satisfies the Palais--Smale condition on $S^{-1}((\alpha_{\fk_3}+\epsilon,\alpha_{\fk_2}-\epsilon))$, a standard argument now shows that there exists a critical point $g$  of co-index at most 1 with $S(g)=c$. Indeed, if we have no critical point at level~$c$, then the Palais--Smale condition implies that  there exist constants $\eta,\delta>0$ such that
\begin{equation*}\label{lower_bound}
|\grad S_{|\M_T}(h)|_h>\eta \qquad \text{for all} \qquad h\in S^{-1}((c-\delta,c+\delta))\cap\M_T.
\end{equation*}
But then  $S(\gamma_s(t))>c+\frac\delta2$ for all $t\in\R$ if $s$ is sufficiently large, contradicting the definition of $c$. If none of these critical points at level $c$ has co-index 0 or~1, then we can  
deform $\gamma_s$ into a curve $\tilde\gamma$ with  $\inf_{t\in\mathbb R}S(\tilde\gamma(t))>c$.
\end{proof}

It is easy to restate the conditions of \tref{main_2} in terms of the components~$T_i$ and the structure constant~$[123]$. We do this for the simplest case, the Wallach space $SU(3)/\mathbb T^2$, in \sref{sec_Examples}. Generalized Wallach spaces were classified in \cite{Ni07}. For two of them, the modules $\fm_i$ may be equivalent, namely, for $SO(n+2)/SO(n)$ and the Ledger--Obata spaces $H^4/\diag(H)$ with $H$ simple (here we have spaces of homogeneous metrics of dimensions up to six).
However, we can still use our methods to produce saddle points on these two spaces. To do so, we choose a decomposition of~$\fm$ such that a diagonal metric with respect to this decomposition has a diagonal Ricci tensor. We can do this using the symmetries induced by $N(H)/H=O(2)$, where $N(H)$ is the normalizer, and $\Aut(G,H)/\Inn(G,H)=S_4$; cf.~\cite{PZ22}. Thus critical points of the scalar curvature functional restricted to diagonal metrics in $\M_T$ are necessarily critical points of the scalar curvature functional on all of~$\M_T$. We can now argue as in the proof of Theorem B to obtain such critical points.
\smallskip

\bigskip

\section{Saddle points on  generalized flag manifolds}\label{sec_graph}
\smallskip

In this section, we prove a graph theorem for generalized flag manifolds with three or four isotropy summands. 
In the case of three summands, these spaces fall into two categories, those of type~I and type~II; see~\cite{Ki90} for a classification. One easily sees that the assumptions of \tref{main_3} are never satisfied for those  of type~II. However, these manifolds are also generalized Wallach spaces, which were discussed in~\sref{Wallach_saddle}. For them Theorem~\ref{main_2} yields critical points of $S_{|\M_T}$ of co-index 0 or~1. Notice  though that these critical points exist not below the lowest level, as in the framework of \tref{main_3}, but between intermediate critical levels at infinity.

Generalized flag manifolds with four summands also fall into two categories, those of type I and type II, as classified in~\cite{AC10}. On these spaces, unlike in the case of three summands, Maple cannot assist in finding critical points of $S_{|\M_T}$ since the Euler--Lagrange equations are too complicated.
 
We now work towards the proof of \tref{main_3}. The assumptions of Proposition~\ref{neg_der} do not hold on all generalized flag manifolds. Therefore, we need a variant of this proposition specific to our situation.

\begin{lem}\label{neg_der_2}
Consider a generalized flag manifold $G/H$ with three or four summands and an intermediate subgroup $K$ with Lie algebra $\fk$ as in \tref{main_3} and assume that $\beta_\fk-\alpha_\fk<0$. Given $\delta>0$, there exist constants $\delta_-,\delta_+,\epsilon>0$ such that $\delta_-<\delta_+\le\delta$ and $S(y)\le\alpha_{\fk}-\epsilon$ for all $y\in T_{\delta_+}(\Delta_\fk)\setminus T_{\delta_-}(\Delta_\fk)$.
\end{lem}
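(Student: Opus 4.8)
The plan is to mimic the proof of Proposition~\ref{neg_der}, tracking which of its ingredients survive when $G/K$ is no longer isotropy irreducible and the modules may be equivalent. Write $\Delta_\fk=(\Delta_J,\fk)$ and for $y=(y_1,\ldots,y_r)\in\Delta$ split the scalar curvature as in~\eqref{est_fla1}, so that
\begin{align*}
S(y)&\le\alpha_\fk\tr_{y_{|\fk\cap\fm}}T_{|\fk\cap\fm}+\frac12\sum_{i\in J^c}d_ib_iy_i-\frac14\sum_{i,j,k\in J^c}[ijk]\frac{y_iy_j}{y_k}
-\frac14\sum_{i\in J}\sum_{j,k\in J^c}y_i[ijk]\Big(\frac{y_j}{y_k}+\frac{y_k}{y_j}-2\Big).
\end{align*}
The term $\alpha_\fk\tr_{y_{|\fk\cap\fm}}T_{|\fk\cap\fm}$ is handled exactly as before since $K/H$ is still controlled by $\alpha_\fk$; the new point is that the restriction $y_{|\fk^\perp}$ now defines a metric on $G/K$ which, when all $y_i$ with $i\in J^c$ are equal, has scalar curvature $\beta_\fk\tr_{y_{|\fk^\perp}}T_{|\fk^\perp}$ only in the sense of the supremum, not necessarily realized. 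So I would replace $\beta_\fk$ by the actual scalar curvature of the ``averaged'' metric where relevant, or better, keep the inequality $S(y_{|\fk^\perp})\le\beta_\fk\tr_{y_{|\fk^\perp}}T_{|\fk^\perp}$ and argue from there.

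First I would dispose of the strata adjacent to $\Delta_\fk$ of the form $(\Delta_{J'},\infty)$ or $\Delta_{\fk'}$ with $\fk'\subsetneq\fk$: by the hypothesis $\alpha_{\fk'}<\alpha_\fk$ for all proper intermediate subalgebras (which, by inspection of the classification in~\cite{Ki90,AC10}, is part of the hypothesis of \tref{main_3}) together with \pref{limit_subalgebra}, there exist $\delta_0,\epsilon_0>0$ with $S(y)<\alpha_\fk-\epsilon_0$ whenever $\min_{i\in J}y_i<\delta_0$. Thus it suffices to work in the region $\min_{i\in J}y_i\ge\delta_0$. Next, split into the two cases of Proposition~\ref{neg_der}: the ``small deviation'' case $\frac{y_q}{y_p}-1\le\Theta$ and the ``large deviation'' case $\frac{y_q}{y_p}-1>\Theta$, where $y_p=\min_{i\in J^c}y_i$, $y_q=\max_{i\in J^c}y_i$ and $\Theta$ is a positive constant chosen as in~\eqref{est_fla2}, but now adapted so that the cross-terms $\frac{y_iy_j}{y_k}$ with $i,j,k\in J^c$ (absent in the isotropy-irreducible case) are absorbed. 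In the small-deviation case one estimates $S(y)\le\alpha_\fk\tr_{y_{|\fk\cap\fm}}T_{|\fk\cap\fm}+\beta_\fk\tr_{y_{|\fk^\perp}}T_{|\fk^\perp}+(\text{error of size }\Theta)$, yielding $S(y)\le\alpha_\fk-(\alpha_\fk-\beta_\fk)d_qT_q\delta_-+C\Theta\delta_+$, and for $\delta_-=\frac34\delta_+$ and $\delta_+$ small this is $\le\alpha_\fk-c\delta_+$. In the large-deviation case, since $G/K$ is connected there is still a chain $[j_1pp_1][j_2p_1p_2]\cdots[j_\mu p_{\mu-1}q]>0$ with $j_1,\ldots,j_\mu\in J$ and $p_1,\ldots,p_{\mu-1}\in J^c$ (connectedness of the isotropy ``graph'' over $J^c$ via $J$ is exactly the statement that $\fk^\perp$ is $\Ad_K$-irreducible as a whole, or at least that no proper $\Ad_H$-invariant subspace of $\fk^\perp$ is $\Ad_K$-invariant, which on flag manifolds with $\le4$ summands one checks case by case); hence one finds $m\in J$ and $u,v\in J^c$ with $[muv]>0$ and $\frac{y_v}{y_u}-1>\Theta_*$, and the term $-\frac14y_m[muv]\big(\frac{y_u}{y_v}+\frac{y_v}{y_u}-2\big)$ drives $S(y)$ below $\alpha_\fk$ by a definite amount once $\min_{i\in J}y_i\ge\delta_0$ and $\delta_+$ is small. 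Setting $\epsilon$ to be the minimum of the three resulting gaps finishes the argument.

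The main obstacle — and the reason this needs to be a separate lemma rather than an application of \pref{neg_der} — is the presence of the structure constants $[ijk]$ with all three indices in $J^c$, i.e. nontrivial brackets among the modules making up $\fk^\perp$. In the isotropy-irreducible situation these vanish and the base metric's scalar curvature is linear in the $y_i$, which is what makes the ``$\beta_\fk$'' estimate clean. Here one must show that these extra negative terms $-\frac14\sum_{i,j,k\in J^c}[ijk]\frac{y_iy_j}{y_k}$ do not conspire with the positive linear terms $\frac12\sum_{i\in J^c}d_ib_iy_i$ to push $S(y)$ above $\alpha_\fk$ near $\Delta_\fk$. The clean way around this is to invoke \pref{limit_subalgebra} once more: the closure of $\Delta_\fk$ meets $\partial\Delta$ in strata $(\Delta_{J''},\infty)$ and $\Delta_{\fk''}$, and on a flag manifold with three or four summands the hypothesis of \tref{main_3} that $\alpha_\fk$ is strictly the lowest ensures (via the explicit list of intermediate subalgebras) that every such adjacent subalgebra stratum $\fk''$ satisfies $\alpha_{\fk''}<\alpha_\fk$; combined with the explicit finite check that $G/K$ has no proper $\Ad_K$-invariant $\Ad_H$-summand, the cross-terms are either controlled by the chain argument (large deviation) or negligible of order $\delta_+$ (small deviation). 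I would present the small-four-summand classification bookkeeping as the verification that these structural hypotheses hold, and then the estimates go through verbatim as in Proposition~\ref{neg_der}.
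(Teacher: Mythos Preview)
Your proposal has a genuine gap in precisely the case that makes the lemma non-trivial. For three summands, and for $\fk=\fk_3$ or $\fk=\fk_{24}$ in the four-summand type~I case, the base $G/K$ \emph{is} isotropy irreducible and Proposition~\ref{neg_der} applies directly. The only new case is $\fk=\fk_4$ in type~I, where $G/K_4$ splits as $\fn_{13}\oplus\fn_2$ with $\fn_{13}=\fm_1\oplus\fm_3$ and $\fn_2=\fm_2$ both $\Ad_K$-irreducible. Here $J=\{4\}$, $J^c=\{1,2,3\}$, and the only structure constants $[4ab]$ with $a,b\in J^c$ are $[413]$ and $[422]$. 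There is \emph{no} chain through $J$ connecting $2$ to $\{1,3\}$: your parenthetical claim that ``no proper $\Ad_H$-invariant subspace of $\fk^\perp$ is $\Ad_K$-invariant, which on flag manifolds with $\le4$ summands one checks case by case'' is exactly what fails, since $\fn_2$ is such a subspace. Consequently, if $y_1\approx y_3$ but $y_2$ is far from them, your large-deviation argument produces no usable negative term.

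The paper's fix is to measure deviation only \emph{within} the $K$-irreducible summand $\fn_{13}$, i.e., to take the ratio $y_3/y_1$ rather than $\max_{J^c}y_i/\min_{J^c}y_i$. In the small-deviation regime $y_3/y_1-1\le\Theta$, the restriction $y_{|\fk^\perp}$ is close to a genuine $K$-invariant metric $\tilde y=\frac1{y_1}Q_{|\fn_{13}}+\frac1{y_2}Q_{|\fn_2}$ on $G/K$ (with $y_2$ \emph{unconstrained}), and the inequality $S(\tilde y)\le\beta_\fk\tr_{\tilde y}T_{|\fk^\perp}$ then applies; an explicit computation with the structure constants $[112],[123]$ controls the error. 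In the large-deviation regime $y_3/y_1-1>\Theta$, the single constant $[413]>0$ supplies the negative term $-\tfrac12 y_4[413]\big(\tfrac{y_1}{y_3}+\tfrac{y_3}{y_1}-2\big)$, and $y_4$ is bounded below via the constraint $\tr_yT=1$ (not via adjacent strata, since $\Delta_{\fk_4}$ is a vertex). Your overall architecture is right, but the choice of which ratio governs the case split is the essential point you are missing.
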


\begin{proof}
If $G/H$ has three summands, the result follows from \pref{neg_der} since $G/K$ is isotropy irreducible. Thus, we assume that $\fm$ splits into irreducible modules as follows:
	\begin{equation}\label{dec_flag}
		\fm=\fm_1\oplus\fm_2\oplus\fm_3\oplus\fm_4.
	\end{equation}
Let us focus on the case where $G/H$ is of type~I. Similar but simpler arguments work for type~II. Now, the only non-vanishing structure constants, up to permutations, are [112], [123], [224] and [134]. The intermediate subalgebras are
	\begin{align*}
		\fk_3=\fh\oplus\fm_3,\qquad \fk_4=\fh\oplus\fm_4\qquad\mbox{and}\qquad \fk_{24}=\fh\oplus\fm_2\oplus\fm_4.
	\end{align*}
	Figure~\ref{fig_Delta} depicts schematically the simplicial complex corresponding to $G/H$ and~$T$. We draw the subalgebra strata red and the infinity strata black.
	
	\begin{figure}[ht]
		\centering
		\includegraphics[width=50mm]{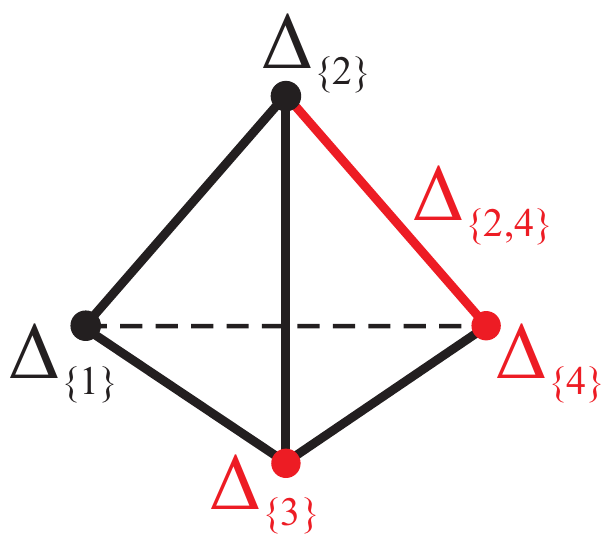}
		\caption{\centering
		The simplicial complex for a generalized flag manifold with four summands of type~I}
		\label{fig_Delta}
	\end{figure}

If $\fk=\fk_3$ or $\fk=\fk_{24}$, then $G/K$ is irreducible. In these cases, the result again follows from \pref{neg_der}. Thus, we may assume that $\fk=\fk_4$. We have
\begin{equation}\label{dec_K-inv}
\fg=\fk_4\oplus\fn_{13}\oplus\fn_2,
\end{equation}
where $\fn_{13}=\fm_1\oplus\fm_3$ and $\fn_2=\fm_2$ are $\Ad_K$-irreducible. Choose $Q$ to be the negative of the Killing form of~$G$.
As in~\eqref{est_fla1}, if $y=(y_1,\ldots,y_4)\in\Delta$, then
\begin{align*}
	S(y)
	\le\alpha_\fk \tr_{y_{|\fk\cap\fm}}T_{|\fk\cap\fm}+\frac12\sum_{i=1}^3d_iy_i&-\frac14\sum_{i,j,k=1}^3[ijk]\frac{y_iy_j}{y_k}
-\frac{y_4[413]}2\Big(\frac{y_1}{y_3}+\frac{y_3}{y_1}-2\Big).
	\end{align*}

Without loss of generality, let $y_1\le y_3$.
First, assume that
	\begin{align*}
\frac{y_3}{y_1}-1\le\Theta,\qquad \mbox{where}\qquad \Theta=\frac{\alpha_\fk-\beta_\fk}{d_3}\min_{i\in\{1,2,3\}}d_iT_i>0.
	\end{align*}
We have
\begin{align*}
	S(y)-\alpha_\fk \tr_{y_{|\fk\cap\fm}}T_{|\fk\cap\fm}&\le\frac12\sum_{i=1}^3d_iy_i-\frac14\sum_{i,j,k=1}^3[ijk]\frac{y_iy_j}{y_k}
\\	
	&\le\frac12(d_1y_1+d_3y_1+d_2y_2)-\frac{2[123]+[112]}4\Big(2y_2+\frac{y_1^2}{y_2}\Big)
\\
&\hphantom{=}~+\frac12d_3y_1\Big(\frac{y_3}{y_1}-1\Big)
+\frac{[123]}2\Big(2y_2+\frac{y_1^2}{y_2}\Big)
\\
&\hphantom{=}~-\frac{[123]}2\Big(\frac{y_1y_3}{y_2}+y_2\Big(\frac{y_1}{y_3}+\frac{y_3}{y_1}\Big)\Big).
\end{align*}
The expression in the second line of this formula equals $S(\tilde y)$, where $\tilde y$ is the $K$-invariant metric on $G/K$ given by
\begin{align*}
	\tilde y=\frac1{y_1}Q_{|\fn_{13}}+\frac1{y_2}Q_{|\fn_2}.
	\end{align*}
Clearly,
\begin{align*}
	S(\tilde y)&\le \beta_\fk\tr_{\tilde y}T_{|\fk^\perp}\le\beta_\fk\tr_{y_{|\fk^\perp}}T_{|\fk^\perp}.
	\end{align*}
Also,	
\begin{align*}
\frac{y_1y_3}{y_2}+y_2\Big(\frac{y_1}{y_3}+\frac{y_3}{y_1}\Big)\ge\frac{y_1^2}{y_2}+2y_2.
\end{align*}	
Consequently,
\begin{align*}
S(y)&\le\alpha_\fk \tr_{y_{|\fk\cap\fm}}T_{|\fk\cap\fm}+\beta_\fk\tr_{y_{|\fk^\perp}}T_{|\fk^\perp}+\frac12d_3y_1\Big(\frac{y_3}{y_1}-1\Big).
\end{align*}

Let $q\in\{1,2,3\}$ be such that $y_q=\max\{y_1,y_3,y_2\}=\max\{y_3,y_2\}$. If $y\in T_{\delta_+}(\Delta_\fk)\setminus T_{\delta_-}(\Delta_\fk)$ for some $\delta_-,\delta_+>0$ such that $\delta_-<\delta_+$, then 
$
\delta_-<y_q\le\delta_+
$
and, as in~\eqref{est_fla2},
	\begin{align*}
	S(y)
	&\le \alpha_\fk - (\alpha_\fk-\beta_\fk)\min_{i\in\{1,2,3\}}d_iT_i\Big(\delta_--\frac{\delta_+}2\Big).
	\end{align*}
Setting $\delta_-=\frac{3\delta+}4$, we obtain
	\begin{align}\label{aux_flag_negder}
	S(y)&\le \alpha_\fk - \frac{\delta_+}4(\alpha_\fk-\beta_\fk)\min_{i\in\{1,2,3\}}d_iT_i.
	\end{align}

Next, assume that
	\begin{align*}
	\frac{y_3}{y_1}-1>\Theta.
	\end{align*}
By analogy with~\eqref{est_fla3}, if $y\in T_{\delta_+}(\Delta_\fk)\setminus T_{\delta_-}(\Delta_\fk)$, then
\begin{align*}
S(y)
&\le\alpha_\fk+\frac12\sum_{i=1}^3d_iy_i-\frac12y_4[413]\Big(\frac{y_1}{y_3}+\frac{y_3}{y_1}-2\Big)
\\
&\le\alpha_\fk+\frac{\delta_+}2(d_1+d_2+d_3)-y_4\frac{[413]\Theta^2}{2(1+\Theta)}.
\end{align*}
The constraint $\tr_yT=1$ implies
\begin{align*}
y_4=\frac1{d_4T_4}\bigg(1-\sum_{i=1}^3d_iT_iy_i\bigg)\ge\frac1{d_4T_4}\bigg(1-\delta_+\sum_{i=1}^3d_iT_i\bigg).
\end{align*}
Consequently, if $\delta_+$ is sufficiently small, then
$y_4\ge\frac1{2d_4T_4}$
and
\begin{align*}
S(y)\le\alpha_\fk+\frac{\delta_+}2(d_1+d_2+d_3)-\frac{[413]\Theta^2}{4d_4T_4(1+\Theta)}
\le\alpha_\fk-\frac{[413]\Theta^2}{8d_4T_4(1+\Theta)}.
\end{align*}
Recalling~\eqref{aux_flag_negder}, we conclude that the assertion of Lemma~\ref{neg_der_2} holds with $\delta_+$ near zero, $\delta_-=\frac{3\delta_+}4$ and
\begin{align*}
\epsilon=\min\bigg\{\frac{\delta_+}4(\alpha_\fk-\beta_\fk)\min_{i\in\{1,2,3\}}d_iT_i,\frac{[413]\Theta^2}{8d_4T_4(1+\Theta)}\bigg\}.
\end{align*}
\end{proof}

\begin{proof}[Proof of Theorem~\ref{main_3}]
The simplex $\Delta$ is a triangle if $G/H$ has three irreducible isotropy summands and a tetrahedron if it has four. In either case, in light of the assumption $\alpha_\fk<\alpha_{\fk'}$ and~\eqref{inclusion}, the stratum $\Delta_\fk$ must be a vertex of~$\Delta$.  There exists at least one other vertex that is a subalgebra stratum; call it $\Delta_{\fl}$. Clearly, $\fk$ does not contain any other intermediate subalgebras, so  \lref{neg_der_2}  applies to~$\fk$.

We first observe that the scalar curvature functional $S_{|\M_T}$ satisfies the Palais--Smale condition on $S^{-1}((-\infty,a])$ for each $a<\alpha_\fk$. Indeed, this follows from Theorem~\ref{main_1} and the fact that $S_{|\M_T(K'/H)}$ has at most one critical point  if the homogeneous spaces $K'/H$ has at most two irreducible isotropy summands. This critical point is necessarily a global maximum with scalar curvature $\alpha_{\fk'}\ge\alpha_\fk$.

Now use \lref{neg_der_2} to obtain $\e>0$ such that $S(y)<\alpha_{\fk}-\epsilon$ as long as $y$ lies in the difference of tubular neighborhoods $T_{\delta_+}(\Delta_{\fk})\setminus T_{\delta_-}(\Delta_{\fk})$. We also may assume that $\delta_+$ is small enough to ensure that the closure of $T_{\delta_+}(\Delta_{\fk})$ does not contain~$\Delta_{\fl}$.
As a consequence, for sufficiently small~$\e$, the set $\{y\in\Delta\mid S(y)>\alpha_{\fk}-\e\}$ has at least two connected components. One of these components is contained in $T_{\delta_-}(\Delta_{\fk})$, and another in the complement $\Delta\setminus T_{\delta_+}(\Delta_{\fk})$.

Choose a curve $\gamma\colon\R\to\Delta$ connecting $\Delta_\fk$ to $\Delta_\fl$. Notice that $\gamma$ necessarily passes through $T_{\delta_+}(\Delta_{\fk})\setminus T_{\delta_-}(\Delta_{\fk})$. Let $\phi_s$ be the gradient flow of the functional $S_{|\M_T}$. Applying $\phi_s$ to $\gamma$, we obtain a family of curves $\gamma_s(t)=\phi_s(\gamma(t))$. Since all these curves still pass through $T_{\delta_+}(\Delta_{\fk})\setminus T_{\delta_-}(\Delta_{\fk})$, it follows that
$$
\sup_{s\ge0}\inf_{t\in\R}S(\gamma_s(t))\le\alpha_\fk-\e
$$
Standard arguments, as in the proof of Theorem~\ref{main_2}, now show that there must be a critical point of co-index $0$ or $1$ at or below the level $\alpha_\fk-\e$. This finishes the proof of Theorem~\ref{main_3}.
\end{proof}

\begin{rem}\label{rem_thm_C_hyp_relax}
The hypotheses of Theorem~\ref{main_3} can be relaxed slightly. Instead of $\alpha_\fk<\alpha_{\fk'}$, it suffices to assume that $\alpha_\fk\le\alpha_{\fk'}$ for every intermediate subalgebra~$\fk'$ and that $\fk$ has the lowest possible dimension of all the subalgebras satisfying this condition.
\end{rem}

\bigskip

\section{Examples}\label{sec_Examples}
\smallskip

In this section we consider a few examples that illustrate our theorems. We will see that  the possible behavior of critical points is richer than previously observed in the literature. In particular, we demonstrate the existence of isolated degenerate  critical points. We also examine the image of the Ricci curvature operator on homogeneous metrics.

\subsection{Generalized Wallach spaces}\label{Wallach_ex}

Here we study as a typical example the classical Wallach space $SU(3)/\mathbb T^2$. Let $G=SU(3)$ and $H=\mathbb T^2\subset SU(3)$ embedded as a maximal torus.  
On $G$ we choose as the bi-invariant metric $Q$ the negative of the Killing form $B(X,Y)=-6\tr(XY)$. With respect to the decomposition $\fm=\fm_1\oplus\fm_2\oplus\fm_3$, we have 
\begin{align}\label{metric_3dim_def}
g=x_1Q_{|\fm_1}+x_2Q_{|\fm_2}+x_3Q_{|\fm_3}\qquad\mbox{and}\qquad T=T_1Q_{|\fm_1}+T_2Q_{|\fm_2}+T_3Q_{|\fm_3}.
\end{align}
 The structure constant $[123]$ equals~$1/3$. The group $G$ has exactly three maximal connected Lie subgroups $K_1$, $K_2$ and~$K_3$ containing~$H$, namely,
\begin{equation*}
H=\mathbb T^2\subset K_i=U(2) \subset SU(3)=G,\qquad i=1,2,3,
\end{equation*}
with Lie algebras $\fk_i=\fh\oplus\fm_i$. The inclusions of $U(2)$ in $SU(3)$ are given by the three block embeddings. According to~\eqref{alpha_beta_Wallach},
\begin{align*}
	\alpha_{\fk_i}=\frac1{3T_i} \qquad\mbox{and}\qquad \beta_{\fk_i}=\frac1{T_j+T_k},
\end{align*}
where $(i,j,k)$ is a permutation of $\{1,2,3\}$.  The functional $S_{|\M_T}$ attains its global maximum if
$$
\frac{T_j+T_k}{3T_i}<1\qquad \text{with} \qquad T_i=\min\{ T_1,T_2,T_3\},
$$
 which follows from \tref{Ex_max}.
 For the existence of further critical points, \tref{main_2} implies the following.

\begin{prop}\label{prop_Wallach_maxsad}
Let $G/H$ be the Wallach space $SU(3)/\mathbb T^2$. The functional $S_{|\M_T}$ has a critical point of co-index 0 or 1 if
$$
\frac{T_j+T_k}{3T_i}>1
$$
for two distinct~$i$. 
\end{prop}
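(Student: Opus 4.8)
The plan is to deduce \pref{prop_Wallach_maxsad} directly from \tref{main_2}. The first step is to record that $SU(3)/\mathbb T^2$ satisfies the standing assumptions of \sref{Wallach_saddle}: its three isotropy summands $\fm_1,\fm_2,\fm_3$ are the root spaces of $\mathfrak{su}(3)$ relative to the maximal torus, hence pairwise inequivalent, and $[123]=1/3$ is (up to permutation) the only non-vanishing structure constant. Thus $SU(3)/\mathbb T^2$ is a generalized Wallach space with exactly three intermediate subgroups $K_1,K_2,K_3$, so \tref{main_2} is applicable to it.

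The second step is to translate the hypothesis into the form required by \tref{main_2}. As computed above from~\eqref{alpha_beta_Wallach}, one has $\alpha_{\fk_i}=\frac1{3T_i}$ and $\beta_{\fk_i}=\frac1{T_j+T_k}$, where $(i,j,k)$ is a permutation of $\{1,2,3\}$. Consequently $\beta_{\fk_i}-\alpha_{\fk_i}<0$ if and only if $\frac1{T_j+T_k}<\frac1{3T_i}$, i.e.\ if and only if $\frac{T_j+T_k}{3T_i}>1$. Therefore the assumption that $\frac{T_j+T_k}{3T_i}>1$ for two distinct values of $i$ is precisely the assumption of \tref{main_2} that two of the three numbers $\beta_{\fk_i}-\alpha_{\fk_i}$ are negative.

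The final step is simply to invoke \tref{main_2}, which then yields a critical point of $S_{|\M_T}$ of co-index $0$ or~$1$, as claimed. I do not anticipate a genuine obstacle here: everything reduces to the two bookkeeping checks above, namely the inequivalence of the modules (so that the clean version of \tref{main_2}, rather than the variant discussed at the end of \sref{Wallach_saddle}, suffices) and the elementary rewriting of the inequality. If desired, one can also note at this point that the hypothesis is complementary --- within the region where $T_i=\min\{T_1,T_2,T_3\}$ --- to the condition $\frac{T_j+T_k}{3T_i}<1$ under which \tref{Ex_max} already guarantees a global maximum; this sets up the description of the two regions in the $(T_1,T_2,T_3)$-simplex discussed later in \sref{sec_Examples}.
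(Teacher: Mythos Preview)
Your proposal is correct and follows exactly the paper's own approach: the proposition is stated in the paper as an immediate consequence of \tref{main_2}, obtained by substituting the explicit values $\alpha_{\fk_i}=\frac1{3T_i}$ and $\beta_{\fk_i}=\frac1{T_j+T_k}$ computed from~\eqref{alpha_beta_Wallach} and rewriting $\beta_{\fk_i}-\alpha_{\fk_i}<0$ as $\frac{T_j+T_k}{3T_i}>1$. Your additional remark that the three root spaces are pairwise inequivalent so that the unmodified \tref{main_2} applies is also in line with the paper's setup for \sref{Wallach_saddle}.
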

As we will see below, the co-index is in fact equal to $1$ since these critical points turn out to be non-degenerate.

The quotient $N(H)/H$, where $N(H)$ is the normalizer of $H$, is isomorphic to the permutation group on three letters. It acts on the space of metrics
by permuting~$x_i$. If $\Ric(g)=cT$, then $\Ric(n^*(g))=cn^*(T)$ for all $n\in N(H)$. Therefore, the space of Ricci candidates exhibits a natural threefold symmetry. The presence of the constant $c$ in the equations allows us to normalize~$T$. We do so by setting $T_1+T_2+T_3=1$, which preserves the threefold symmetry. Choosing the coordinates
\begin{align*}
x=\frac{4\sqrt3}3(T_1-T_2) \qquad\mbox{and}\qquad y=4T_3-\frac43
\end{align*}
in the space of normalized Ricci candidates, we mark points in the $(x,y)$-plane that correspond to different behaviors of~$S_{|\M_T}$. The large triangle in Figure~\ref{Wallach_regions} is the set of $(x,y)$ representing positive-definite~$T$. The dark-grey triangle consists of those $(x,y)$ for which \tref{Ex_max} guarantees the existence of a global maximum. In the light-grey regions, Theorem~\ref{main_2} yields a critical point of co-index 0 or~1. The red dots have a special role; see \eref{exa_curve} below. According to a Maple computation (see the discussion below), for indefinite tensors $T$ in the blue region, $S_{|\M_T}$ still has a critical point. On the other hand, for tensors $T$ in the white regions, it has no critical points. However, it may still have critical points which are indefinite metrics.

\begin{figure}[ht]
	\centering
	\includegraphics[width=74mm]{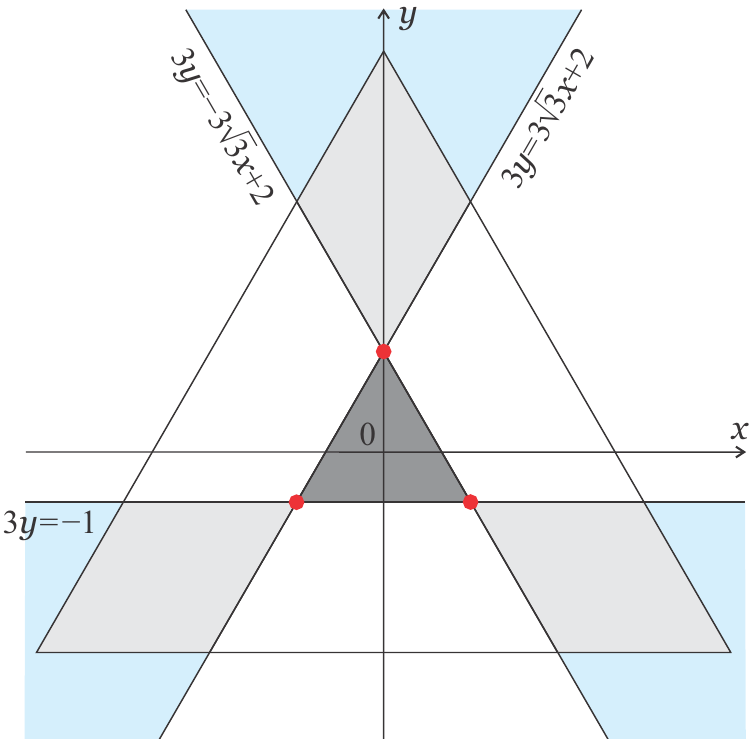}
	\caption{Regions in the $(x,y)$-plane for $SU(3)/\mathbb T^2$}
	\label{Wallach_regions}
\end{figure}
 
The scalar curvature of a metric $g$ given by~\eqref{metric_3dim_def} satisfies
\begin{eqnarray*}
	S(g)=\frac1{x_1}+\frac1{x_2}+\frac1{x_3}-\frac16\Big(\frac{x_1}{x_2x_3}+\frac{x_2}{x_1x_3}+\frac{x_3}{x_1x_2}\Big),
\end{eqnarray*}
and the components of the Ricci curvature of $g$ are given by
\begin{equation}\label{Ric_Wallach}
\ric_i=1 + \frac16\Big(\frac{x_i^2}{x_jx_k} - \frac{x_j}{x_k} - \frac{x_k}{x_j}\Big).
\end{equation}
The constraint $\tr_gT=1$ takes the form
$$
\frac{2T_1}{x_1}+\frac{2T_2}{x_2}+\frac{2T_3}{x_3}=1.
$$
 \pref{Morse} implies that $g$ is a degenerate critical point of $S_{|\M_T}$ if and only if
$
\rank(d\Ric_g)<\dim\M-1.
$
In the case of the Wallach space, $d\Ric_g\colon T_gM\to T_g\mathcal T$ can be thought of as a $3\times 3$ matrix. Setting $x_3=1$ and computing the $2\times 2$ minors of this matrix, we conclude that $\rank(d\Ric_g)<\dim\M-1$ if and only if
\begin{equation}\label{Wallach_degenerate}
x_1^4 - (2x_2^2+2)x_1^2 + x_2^4 - 2x_2^2 + 1=0.
\end{equation}
One easily sees that the solutions to this equation are given by ${x_1\pm x_2=\pm 1}$. According to~\eqref{Ric_Wallach}, the corresponding metrics have Ricci curvature $T$ with $(T_1,T_2,T_3)$ equal to $(1,1,2)$ up to permutation, depicted by the red dots in Figure~\ref{Wallach_regions}. \eref{exa_curve} below provides an explanation of this phenomenon. All the other critical points of $S_{|\M_T}$ are non-degenerate. In particular, the saddle points have co-index $1$, and all funtionals $S_{|\M_T}$ are Morse (respectively Morse-Bott) functions. Furthermore, \rref{Ricci_immersion}(a) implies that the image of the Ricci map is  the union of four smooth  hypersurfaces meeting at three points; see \fref{Wallach-Ric-image}. 

\begin{figure}[ht]
	\centering
	\includegraphics[width=80mm]{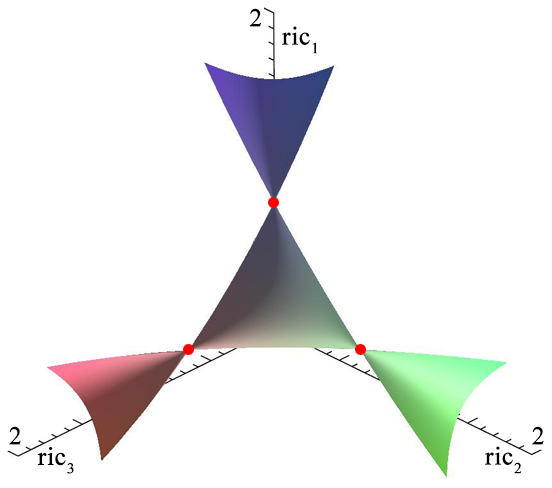}
	\caption{Image of the Ricci map on $SU(3)/\mathbb T^2$}
	\label{Wallach-Ric-image}
\end{figure}

We use Maple to demonstrate that for a tensor $T$ in the white regions in Figure~\ref{Wallach_regions} the functional $S_{|\M_T}$ has no critical points. To do so, we choose two million metrics $g$ with $0<x_i<400$. We then compute $\Ric(g)$ for such metrics and put the corresponding points $T$ into Figure~\ref{Wallach_regions}. They fill out the grey and the blue regions, with the blue regions corresponding to indefinite~$T$. This also implies that $S_{|\M_T}$ has no critical values for $T$ on the lines connecting the red dots, apart from the red dots themselves. Indeed, if a critical point were to exist, it would have to be non-degenerate and hence $S_{|\M_T}$ would have a critical point for all nearby $T$ as well. Thus global maxima and saddle points escape to infinity as $T$ approaches one of these lines.

We  can illustrate the behavior of $S_{|\M_T}$ with some explicit examples.   After choosing  values for $(T_1,T_2,T_3)$, we can solve the constraint in~\eqref{scalar_y} for~$y_1$, substitute the result into the formula for~$S(g)$, and sketch the graph of $S$ as a function of $y_2$ and~$y_3$ over the simplex $\Delta$. In Figure~\ref{Wallach_critical} one sees examples of a global maximum, a saddle point, and a case with no critical points.

\begin{figure}[ht]
	\centering
	\includegraphics[width=155mm]{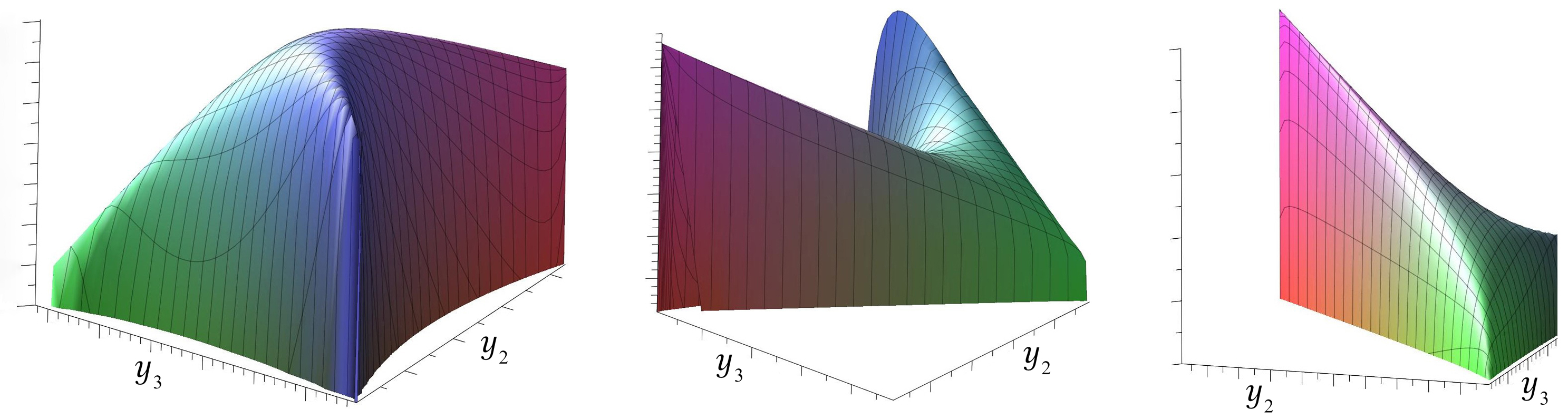}
	\caption{Critical points and absence thereof on $SU(3)/\mathbb T^2$}
	\label{Wallach_critical}
\end{figure}

As the following example shows, moving from the interior of the dark-grey triangle in Figure~\ref{Wallach_regions} to a light-grey region through a red dot, one finds that the transition is achieved by creating a curve of critical points.

\begin{example}\label{exa_curve} It is well known that on the Wallach space the K\"ahler metrics are characterized by the equations $x_k=x_i+x_j$, and they are K\"ahler--Einstein if, in addition, $x_i=x_j$. Fixing a permutation $(i,j,k)$, the corresponding K\"ahler metrics have the same Ricci curvature  (see~\cite{K55}) and hence are critical points of the corresponding functional~$S_{|\M_T}$. One easily shows that the Hessian of $S_{|\M_T}$ at each of these critical points has a zero and a negative eigenvalue. Thus these critical points form a curve, which by the Morse--Bott lemma is an isolated non-degenerate critical submanifold of~$\M_T$ and a local maximum. In Figure~\ref{Wallach_curve} we depict the graph of $S_{|\M_T}$ as a function of $y_2$ and $y_3$ with the black plane just below the critical value. This makes the curve of critical points easily visible and indicates that it is, in fact, a global maximum. The red diamond marks the K\"ahler--Einstein metric.
	\begin{figure}[ht]
		\centering
		\includegraphics[width=70mm]{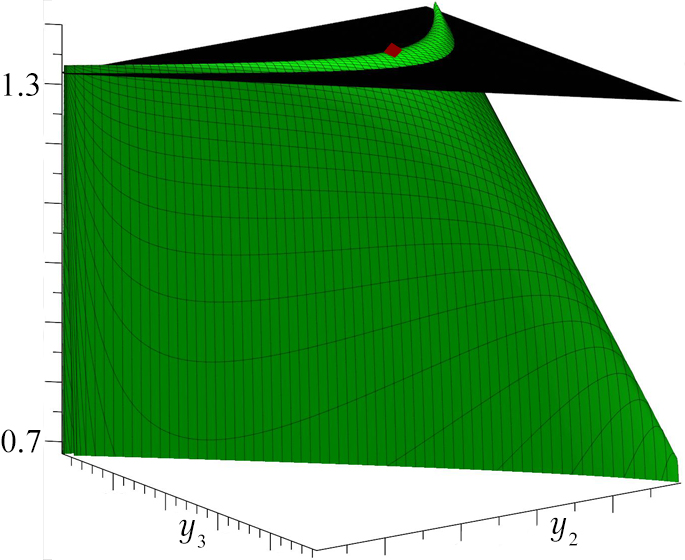}
		\caption{Curve of critical points on $SU(3)/\mathbb T^2$}
		\label{Wallach_curve}
	\end{figure}
	 \smallskip
\end{example}

Example~\ref{exa_curve} underscores the difference between $ S_{|\M_T}$ and the functional $S_{|\M_1}$ associated with the Einstein equation since critical submanifolds for $S_{|\M_1}$ are always compact; see ~\cite{BWZ04}. Another difference is that, as a critical point of $S_{|\M_1}$, the K\"ahler--Einstein metric is a saddle point of co-index 1. We also observe that the fourth Einstein metric on $SU(3)/\mathbb T^2$, corresponding to $T=Q$, is a strict local maximum of both the associated functional $S_{|\M_T}$ and the functional~$S_{|\M_1}$. 

\begin{rem}
Our analysis yields a large class of metrics that have $SU(3)$-invariant Ricci curvature even though they are not $SU(3)$-invariant themselves. More precisely, pick a point in one of the white regions in Figure~\ref{Wallach_regions}. According to our Maple experiment, the corresponding tensor field $T$ is not the Ricci curvature of any $SU(3)$-invariant metric on~$SU(3)/\mathbb T^2$. However, by a result of DeTurck's (see~\cite[Chapter~5]{AB87}), there exists a metric on a neighbourhood of every point in $SU(3)/\mathbb T^2$ with Ricci curvature equal to~$T$.
\end{rem}

\subsection{Generalized flag manifolds with three isotropy summands of type I}
\label{subsec_G2U2}
Recall that generalized flag manifolds with three summands in $\fm$ fall into two categories: those of type~I and type~II. The latter category are special examples of generalized Wallach spaces, which we discussed in the previous subsection. Thus from now on we assume that $G/H$ is a generalized flag manifold with three summands of type I. On $G$ we choose as the bi-invariant metric $Q$ the negative of the Killing form. With respect to the decomposition $\fm=\fm_1\oplus\fm_2\oplus\fm_3$, we have
\begin{align*}
g=\sum_i x_iQ_{|\fm_i}\qquad\mbox{and}\qquad T=\sum_i T_iQ_{|\fm_i},
\end{align*}
and $[112]$ and $[123]$ are the only non-vanishing structure constants, up to permutations. This implies that we have two intermediate subgroups, $K_2$ and $K_3$, with Lie algebras
\begin{equation}
\fk_2=\fh\oplus\fm_2 \qquad\mbox{and}\qquad \fk_3=\fh\oplus\fm_3.
\end{equation} 
Denote $\alpha_i=\alpha_{\fk_i}$ and $\beta_i=\beta_{\fk_i}$. The spaces $G/K_i$ and $K_i/H$ are all isotropy irreducible, and so it is easy to compute
\begin{align*}
&\alpha_2=\frac{d_2-[112]-2[123]}{2d_2T_2}, & &\alpha_3=\frac{d_3-2[123]}{2d_3T_3},\\
&\beta_2=\frac{d_1+d_3}{2d_1T_1+2d_3T_3}, & &\beta_3=\frac{2d_1+2d_2-3[112]}{4d_1T_1+4d_2T_2};
\end{align*}
cf.~\cite{AP19}.
	
For simplicity, we look at a typical case of $G_2/U(2)$, the others being similar.  Here we choose the subgroup $U(2)$ which is contained in $K_2=SO(4)$ and $K_3=SU(3)$ (corresponding to the longest root of $G_2$). Now, $d_1=4$, $d_2=2$, $d_3=4$, $[112]=2/3$ and $[123]=1/2$; see~\cite{AC11}. For a metric $g$ we have
$$
S(g)= \frac2{x_1}+ \frac1{x_2}+ \frac2{x_3} - \frac14\Big(\frac4{3 x_2} + \frac23 \frac{ x_2}{x_1^2} +\frac{x_1}{x_2x_3} + \frac{x_2}{x_1x_3}  + \frac{x_3}{x_1x_2} \Big)
$$ 
with constraint
$$
\frac{4T_1}{x_1}+	\frac{2T_2}{x_2}+	\frac{4T_3}{x_3}=1.
$$
Furthermore,
\begin{equation*}
\alpha_2=\frac{1}{12T_2},\qquad \alpha_3=\frac{3}{8T_3},\qquad \beta_2= \frac{1}{T_1+T_3}   ,\qquad \beta_3=\frac{5}{8T_1 + 4T_2}.
\end{equation*}
 
We can normalize $T$ so that $T_3=1$. Thus $\alpha_2<\alpha_3$ when $T_2>2/9$. For the ``derivatives" $\beta_2-\alpha_2$ and $\beta_3-\alpha_3$, we have
$$
\beta_2-\alpha_2<0\qquad \text{if and only if}\qquad 12T_2<T_1+1
$$
and 
$$
\beta_3-\alpha_3<0\qquad \text{if and only if}\qquad 3T_2>10-6T_1.
$$
The spaces $K_i/H$ are isotropy irreducible, and hence $S_{|\M_T(K_i/H)}$ each has exactly one critical point. \tref{main_1} implies that the limit of the scalar curvature of a divergent Palais--Smale sequence is either $\alpha_2$ or~$\alpha_3$. Away from these values (in particular, below the lower one of them), the Palais--Smale condition is satisfied. \tref{main_3} yields the following result.
	
\begin{prop}\label{G2U2_saddle}
Let $G/H$ be the homogeneous space $G_2/U(2)$. Then the functional $S_{|\M_T}$ has a critical point of co-index 0 or 1 if
\begin{equation}\label{G2U2_conditions}
\frac29 <T_2 <\frac{T_1+1}{12}\qquad
\mbox{or} \qquad \frac{10-6T_1}{3}< T_2<\frac29.
\end{equation}
\end{prop}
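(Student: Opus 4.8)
The plan is to deduce Proposition~\ref{G2U2_saddle} directly from Theorem~\ref{main_3}, by checking that its hypotheses are satisfied in each of the two regimes encoded in~\eqref{G2U2_conditions}. Recall that $G_2/U(2)$ is a generalized flag manifold with three isotropy summands of type~I, that its only intermediate subalgebras are $\fk_2$ and $\fk_3$, and that the homogeneous spaces $K_i/H$ and $G/K_i$ are all isotropy irreducible; in particular neither $\fk_2$ nor $\fk_3$ contains a proper intermediate subalgebra. Thus, whichever of the two has the strictly smallest value of $\alpha$ automatically fulfils the structural requirements of Theorem~\ref{main_3}, and only the sign of the corresponding ``derivative'' $\beta_\fk-\alpha_\fk$ remains to be verified. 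Throughout I would use the normalization $T_3=1$ and the explicit values $\alpha_2=\tfrac1{12T_2}$, $\alpha_3=\tfrac38$, $\beta_2=\tfrac1{T_1+1}$, $\beta_3=\tfrac5{8T_1+4T_2}$ recorded above.

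In the first regime $\tfrac29<T_2<\tfrac{T_1+1}{12}$, the inequality $T_2>\tfrac29$ is equivalent to $\tfrac1{12T_2}<\tfrac38$, i.e.\ $\alpha_2<\alpha_3$, so $\fk_2$ is the unique intermediate subalgebra of smallest $\alpha$; and $12T_2<T_1+1$ is equivalent to $\tfrac1{T_1+1}<\tfrac1{12T_2}$, i.e.\ $\beta_2-\alpha_2<0$. Applying Theorem~\ref{main_3} with $\fk=\fk_2$ then produces a critical point of $S_{|\M_T}$ of co-index $0$ or~$1$. In the second regime $\tfrac{10-6T_1}{3}<T_2<\tfrac29$, the inequality $T_2<\tfrac29$ gives $\alpha_3<\alpha_2$, so $\fk_3$ is now the one of smallest $\alpha$; a direct calculation shows that $\beta_3<\alpha_3$ is equivalent to $40<24T_1+12T_2$, that is, to $3T_2>10-6T_1$, which is precisely the left-hand inequality, so $\beta_3-\alpha_3<0$. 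Applying Theorem~\ref{main_3} with $\fk=\fk_3$ again yields a critical point of co-index $0$ or~$1$. Since \eqref{G2U2_conditions} is exactly the disjunction of these two regimes, the proposition follows.

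There is no genuine obstacle here: the argument is a verification that the hypotheses of Theorem~\ref{main_3} hold, reduced to the elementary inequalities above. The single point worth a moment's care is that Theorem~\ref{main_3} (via Lemma~\ref{neg_der_2}) requires the chosen $\fk$ to contain no other intermediate subalgebra — this is automatic for $G_2/U(2)$ because $K_2/H$ and $K_3/H$ are isotropy irreducible — and that all the inequalities involved are strict, so the borderline case $T_2=\tfrac29$ (where $\alpha_2=\alpha_3$) does not arise; if one wished to include it, Remark~\ref{rem_thm_C_hyp_relax} would apply.
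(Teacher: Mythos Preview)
Your proof is correct and follows exactly the paper's approach: the proposition is stated as a direct consequence of Theorem~\ref{main_3}, and you have carried out the elementary verification that the inequalities in~\eqref{G2U2_conditions} are precisely the conditions $\alpha_{\fk_2}<\alpha_{\fk_3}$ with $\beta_2-\alpha_2<0$ (first regime) and $\alpha_{\fk_3}<\alpha_{\fk_2}$ with $\beta_3-\alpha_3<0$ (second regime). The only minor imprecision is in your final paragraph: the hypothesis of Theorem~\ref{main_3} is that $\alpha_\fk<\alpha_{\fk'}$ for every other intermediate $\fk'$, and the fact that $\fk$ then contains no proper intermediate subalgebra is a consequence (via~\eqref{inclusion}) rather than an additional requirement.
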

As we explain below, the critical points produced by this proposition, in fact, all have co-index~1. We depict the types of critical points in \fref{fig_G2_regions}. Both ``derivatives" $\beta_2-\alpha_2$ and $\beta_3-\alpha_3$ are positive in the green region  and negative in the blue region. Hence we have a global maximum and a saddle point in these regions, respectively. The most interesting case occurs in the small pink triangle in the middle with vertices $(39/25,16/75)$, $(5/3,2/9)$ and $(14/9,2/9)$. Here the higher level has a positive ``derivative" and the lower one a negative ``derivative". Thus the functional has both a global maximum and a saddle point. Maple indicates that the same holds when $T$ lies in one of the three yellow regions.

\begin{figure}[ht]
\centering
\includegraphics[width=150mm]{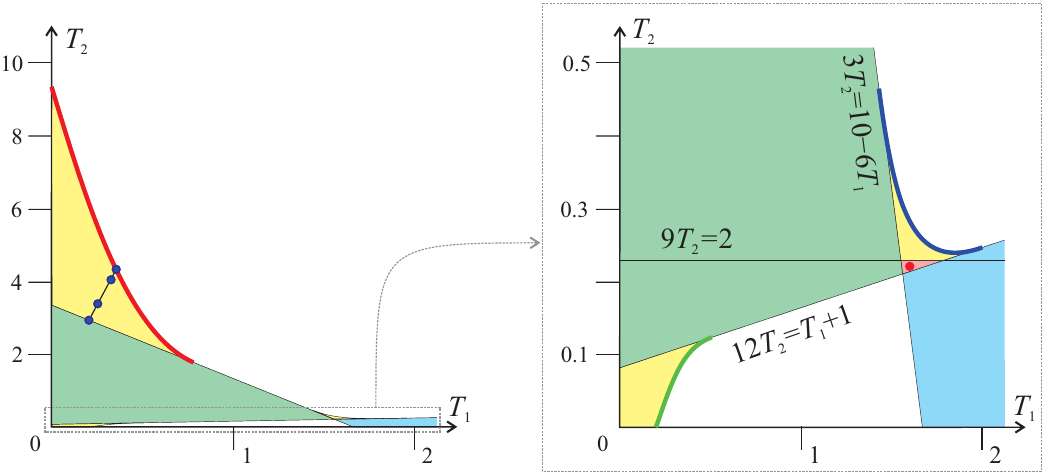}
\caption{Regions in the $(T_1,T_2)$-plane for $G_2/U(2)$}
\label{fig_G2_regions}
\end{figure}

As in the case of the Wallach space, we first determine the degenerate critical points. The three components of the Ricci tensor are 

\begin{align*}\label{Ric_G2_U(2)}
\ric_1&=\frac12-\frac{x_2}{12x_1}+\frac{x_1^2}{16x_2x_3}-\frac{x_2}{16x_3}-\frac{x_3}{16x_2},\notag
\\
\ric_2&=\frac13+\frac{x_2^2}{12x_1^2}-\frac{x_1}{8x_3}+\frac{x_2^2}{8x_1x_3}-\frac{x_3}{8x_1},\notag
\\
\ric_3&=\frac12 - \frac{x_1}{16x_2}-\frac{x_2}{16x_1}+\frac{x_3^2}{16x_1x_2}.
\end{align*}

By \pref{Morse}, a  critical point $g$ is degenerate if and only if
$\rank(d\Ric_g)<\dim\M-1$, and one easily shows that this holds only if
$$
3x_1^5 - (6x_2^2 + 6x_3^2)x_1^3 + 3(x_2 - x_3)^2(x_2 + x_3)^2x_1 - 8x_2^2x_3^3=0.
$$
Setting $x_3=1$, we find that the positive solutions to this equation are
\begin{equation}\label{G2_degenerate}
    x_1=t,\qquad x_2=\sqrt{t^2+ 1 + \frac4{3t} \pm \frac2{3t}\sqrt{9t^4 + 6t^3 + 6t + 4}}.
\end{equation}
Substituting  into the formulas
$$
T_1=\frac{\ric_1}{\ric_3}\qquad\mbox{and}\qquad T_2=\frac{\ric_2}{\ric_3},
$$
we obtain two curves, which form a portion of the boundary of the three yellow regions in Figure~\ref{fig_G2_regions}. The curve with the plus sign in \eqref{G2_degenerate} is depicted in red. The one with the minus sign brakes up into two parts, shown in bright green and dark blue. Away from these curves, all the critical points of $S_{|\M_T}$ are non-degenerate, and hence the ones in \pref{G2U2_saddle} have co-index~1.  \rref{Ricci_immersion}(a) implies that image of the Ricci map is the union of four smooth hypersurfaces depicted on the left-hand side of \fref{fig_sing_curves}. These hypersurfaces meet along curves, shown on the right-hand side, each of which projects radially onto a curve of the same color in~\fref{fig_G2_regions}.

\begin{figure}[ht]
\centering
\includegraphics[width=130mm]{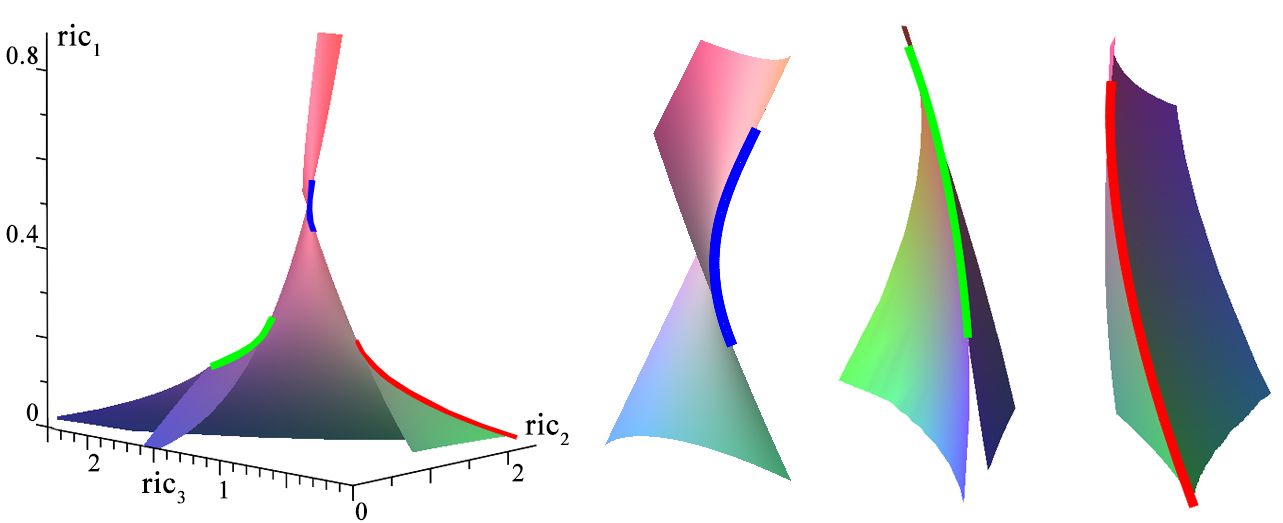}
\caption{Image and singularities of the Ricci map on $G_2/U(2)$}
\label{fig_sing_curves}
\end{figure}

We now illustrate the behavior with some explicit examples.

\begin{example}
We take a point inside the pink triangle, $(T_1,T_2)=(8/5,11/50)$, the red dot on the right-hand side of Figure~\ref{fig_G2_regions}. Our results predict the existence of a global maximum and a saddle for~$S_{|\M_T}$. This is depicted in Figure~\ref{Ex1}, where we sketch the graph of $S_{|\M_T}$ in Maple as a function of $y_1$ and $y_2$ over the simplex $\Delta$. The red and blue diamonds mark the critical points. As one can see, the existence of a global maximum and a saddle are guaranteed since the ``derivative" is positive at the highest critical level at infinity and negative at the lowest one.
\end{example}

We can exhibit the possible bifurcations of critical points as we move~$T$ along the straight line segment with four blue dots in Figure~\ref{fig_G2_regions}.

\begin{example}
We take a point on the boundary of the green region, $(T_1,T_2)=(1/5,44/15)$, the first of the blue dots. Here, the highest critical level at infinity has ``derivative" zero, and the lowest one a positive ``derivative".   Our results do not predict the existence of any critical points for~$S_{|\M_T}$. Nevertheless, sketching the graph of $S_{|\M_T}$ in Maple, we see that the functional has a unique critical point, specifically, a global maximum; see the picture on the left in \fref{fig_G2_cr_pts}.
\end{example}	

\begin{example}
We continue along our line segment to the next blue dot. The highest level has a negative ``derivative", and the lowest one a positive ``derivative". Again, our results do not predict the existence of any critical points. Nevertheless, sketching the graph in Maple, we see that the functional has a global maximum and a saddle point; see the second picture in \fref{fig_G2_cr_pts}. Notice that a curve from the global maximum to the critical level at infinity on the left ``remains hanging" on a saddle point, as one would expect from a mountain pass argument.
\end{example}

\begin{example}
We continue to the next blue dot. The global maximum becomes a local maximum, the saddle point remains, and there is no global maximum any more; see the third picture in Figure~\ref{fig_G2_cr_pts}. 
\end{example}

\begin{example}
Finally, we take the endpoint of our line segment that lies on the boundary of the yellow region. Its coordinates are
$$
(T_1,T_2)=\bigg(\frac{12\sqrt{15} - 35 }{12\sqrt{15} - 15},\frac{106\sqrt{15}}{36\sqrt{15} - 45}\bigg).
$$
Our observations above guarantee that $S_{|\M_T}$ must have a degenerate critical point. It turns out that the critical point is unique; see the graph of $S_{|\M_T}$ on the right-hand side of \fref{fig_G2_cr_pts}.
\end{example}

\begin{figure}[ht]
	\centering
	\includegraphics[width=70mm]{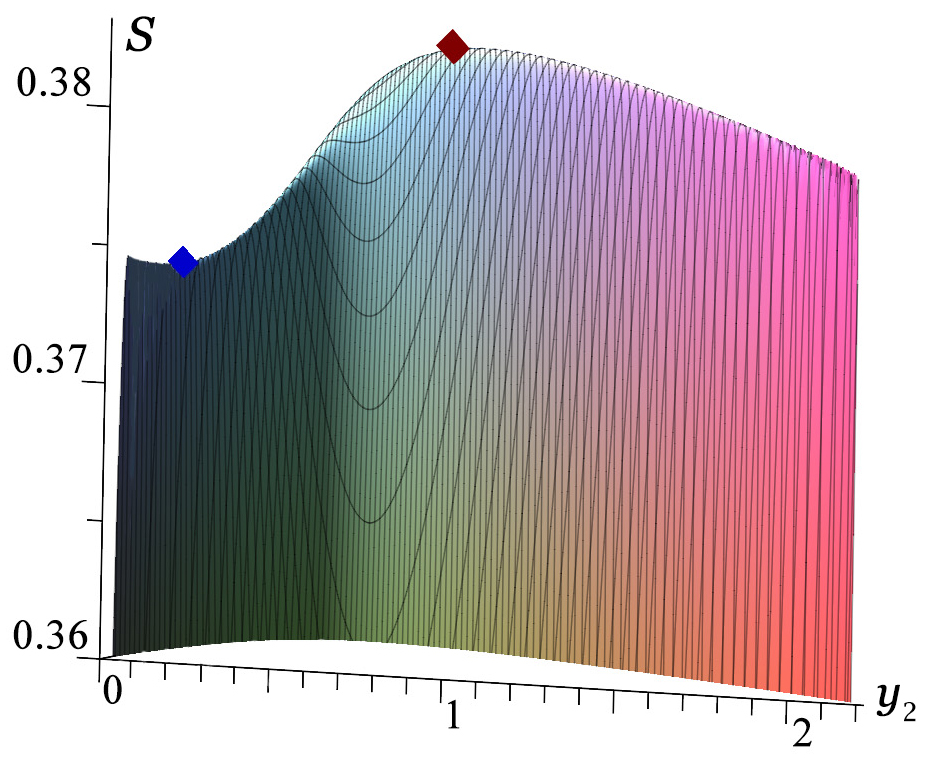}
	\caption{Critical points on $G_2/U(2)$ for a choice of $T$ in the triangle}
	\label{Ex1}
\end{figure}

\begin{figure}[ht]
\centering
\includegraphics[width=160mm]{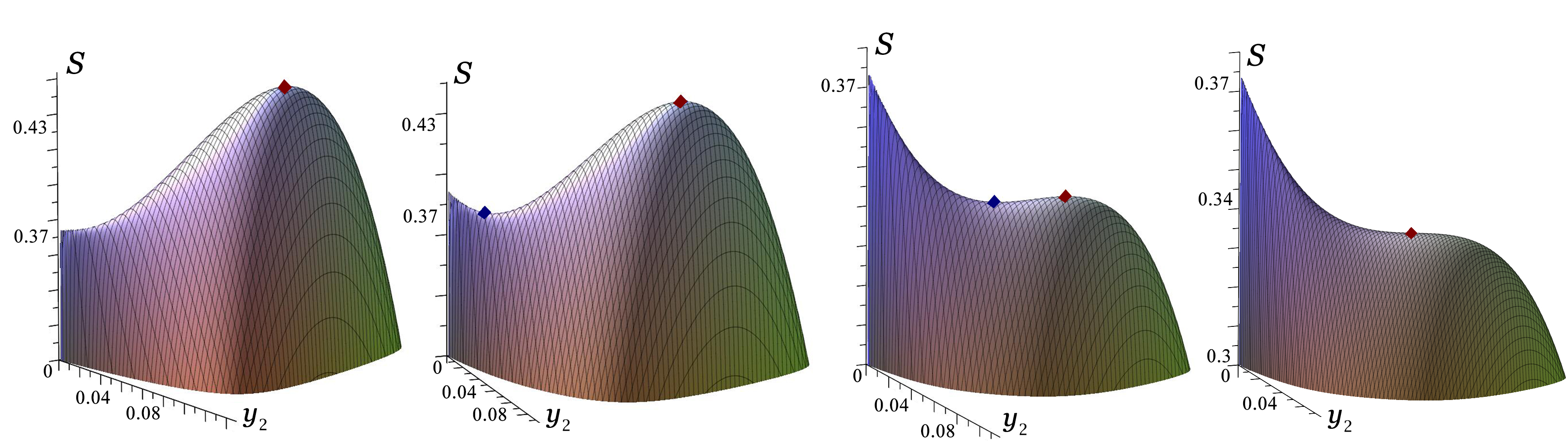}
\caption{Critical points on $G_2/U(2)$}
\label{fig_G2_cr_pts}
\end{figure}

\subsection{ Generalized flag manifolds with four isotropy summands}
\label{subsec_F4SU3SU2U1}

As in the case of three isotropy summands,  generalized flag manifolds with four isotropy summands can be of type~I or type~II; see~\cite{AC10}. We consider here an example of a generalized flag manifold of type~I. Specifically, assume that $G/H=F_4/(U(3)SU(2))$. As above, let $Q=-B$. The only non-vanishing structure constants, up to permutations, are [112], [123], [224] and~[134]. The intermediate subgroups are $K_3$, $K_4$ and $K_{24}$ with Lie algebras
\begin{align*}
\fk_3=\fh\oplus\fm_3,\qquad \fk_4=\fh\oplus\fm_4\qquad\mbox{and}\qquad \fk_{24}=\fh\oplus\fm_2\oplus\fm_4.
\end{align*}
We have
\begin{align*}
d_1=12,\qquad d_2&=18,\qquad d_3=4,\qquad d_4=6,
\\ 
[112]&=2,\qquad [123]=1,\qquad [224]=2,\qquad [134]=\frac23;
\end{align*}
see~\cite{AC10}.

Our next goal is to find the invariants $\alpha_\fk$ and $\beta_\fk$ for various choices of~$\fk$. As above, we denote $\alpha_3=\alpha_{\fk_3}$, $\alpha_4=\alpha_{\fk_4}$ and $\alpha_{24}=\alpha_{\fk_{24}}$, and similarly for the $\beta$s. Since $K_3/H$ and $K_4/H$ are isotropy irreducible, we have
\begin{align*}
\alpha_3&=
\frac{d_3-2[312]-2[314]}{2d_3T_3}=\frac1{12T_3},\qquad \alpha_4=\frac{d_4-[224]-2[314]}{2d_4T_4}=\frac2{9T_4};
\end{align*}
cf.~\cite[Eq.~(2.15)]{AP19}.
Similarly, $G/K_3$ is isotropy irreducible, and
\begin{align*}
\beta_3&=\frac{2(d_1+d_2+d_4)-3[112]-3[224]}{4(d_1T_1+d_2T_2+d_4T_4)}=\frac{5}{4T_1+6T_2+2T_4}.
\end{align*}
Let us compute~$\beta_4$. We could also compute $\alpha_{24}$ and~$\beta_{24}$, but we do not need to know their values to apply Theorem~\ref{main_3}. Indeed,~\eqref{inclusion} implies that $\fk_{24}$ can never satisfy the conditions of this theorem.

To find~$\beta_4$,  we need to maximize the scalar curvature of the metric
\begin{align*}
h=\tfrac1{y_{13}}Q_{|\fm_1\oplus\fm_3}+\tfrac1{y_2}Q_{|\fm_2}
\end{align*}
subject to the constraint
\begin{align*}
(12T_1+4T_3)y_{13}+18T_2y_2=1.
\end{align*}
Solving for $y_{13}$ and substituting into the formula for the scalar curvature, we obtain
\begin{align*}
S(h)&=8y_{13}+7y_2-\frac{y_{13}^2}{y_2}=\frac{I_1}{y_2}+I_2y_2+I_3, \qquad \mbox{where} 
\\
I_1&=-\frac1{16(3T_1+T_3)^2},\qquad
I_2=\frac{28(3T_1+T_3)^2-144(3T_1+T_3)T_2-81T_2^2}{4(3T_1+T_3)^2}
,
\\
I_3&=\frac{16(3T_1+T_3)+27T_2}{8(3T_1+T_3)^2}.
\end{align*}
This function has a critical point if and only if $I_2<0$. In this case, the critical point is a global maximum, attained at $y_2=\sqrt{\frac{I_1}{I_2}}$. Recall, however, that we need $y_2<\frac1{18T_2}$ to ensure that~$y_{13}>0$. If $I_2\ge0$ or $\sqrt{\frac{I_1}{I_2}}\ge\frac1{18T_2}$, then the supremum of $S(h)$ is the limit of $S(h)$ as $y_2\to\frac1{18T_2}$.

There are two cases to consider. One is where
\begin{align*}
7(3T_1+T_3)\ge36T_2.
\end{align*}
In this case, either $I_2\ge0$ or $\sqrt{\frac{I_1}{I_2}}\ge\frac1{18T_2}$. This means that $\beta_4=\frac{7}{18T_2}$.
The other case is where
\begin{align*}
7(3T_1+T_3)<36T_2.
\end{align*}
Then $I_2<0$, $\sqrt{\frac{I_1}{I_2}}<\frac1{18T_2}$ and
\begin{align*}
\beta_4&=\Big(\frac{I_1}{y_2}+I_2y_2+I_3\Big)_{|y_2=\sqrt{\frac{I_1}{I_2}}}=2\sqrt{I_1I_2}+I_3.
\\ &=\frac{2\sqrt{-28(3T_1+T_3)^2+144(3T_1+T_3)T_2+81T_2^2}+16(3T_1+T_3)+27T_2}{8(3T_1+T_3)^2}.
\end{align*}

Theorem~\ref{main_1} implies that the scalar curvature may converge only to $\alpha_3$, $\alpha_4$ or~$\alpha_{24}$ along divergent Palais--Smale sequences. Theorem~\ref{main_3} and Remark~\ref{rem_thm_C_hyp_relax} provide sufficient conditions for the existence of a critical point of co-index 0 or~1. We summarize these conditions in the following proposition. For convenience, we normalize $T$ so that $T_4=1$.

\begin{prop}\label{prop_4sum}
Let $G/H$ be the homogeneous space $F_4/(U(3)SU(2))$. The functional $S_{|\M_T}$ has a critical point of co-index $0$ or $1$ if the components of $T$ satisfy one of the following three collections of inequalities:
\begin{enumerate}
\item[\emph{(1)}]
$T_3\ge3/8$ and $30T_3<2T_1+3T_2+1$.
\item[\emph{(2)}]
$T_3\le3/8$, $7(3T_1+T_3)\ge36T_2$ and $T_2>7/4$.
\item[\emph{(3)}]
$T_3\le3/8$, $7(3T_1+T_3)<36T_2$ and
$$
\frac{2\sqrt{-28(3T_1+T_3)^2+144(3T_1+T_3)T_2+81T_2^2}+16(3T_1+T_3)+27T_2}{8(3T_1+T_3)^2}<\frac2{9}.
$$
\end{enumerate}
If the first collection of inequalities hold, then $\alpha_3\le\alpha_4\le\alpha_{24}$, i.e., $\alpha_3$ is the lowest level. If the second or the third collection hold, then $\alpha_4\le\alpha_3$ and $\alpha_4\le\alpha_{24}$.
\end{prop}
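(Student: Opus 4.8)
The plan is to deduce the proposition from Theorem~\ref{main_3}, in the relaxed form of Remark~\ref{rem_thm_C_hyp_relax}, by applying it to one of the three intermediate subalgebras $\fk_3$, $\fk_4$, $\fk_{24}$. First I would record two structural facts valid for every positive-definite $T$: since $\fk_4\subset\fk_{24}$, inclusion~\eqref{inclusion} gives $\alpha_4\le\alpha_{24}$, so that $\min\{\alpha_3,\alpha_4,\alpha_{24}\}$ is always attained by $\fk_3$ or $\fk_4$; and $\dim\fk_3<\dim\fk_4<\dim\fk_{24}$. Moreover, $K_3/H$ and $K_4/H$ are isotropy irreducible and $K_{24}/H$ has exactly two isotropy summands, so each functional $S_{|\M_T(K'/H)}$ has at most one critical point, which is a global maximum; by Theorem~\ref{main_1} this means there is no critical level at infinity below $\min\{\alpha_3,\alpha_4\}$. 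Consequently, whenever one of $\fk_3$, $\fk_4$ attains $\min\{\alpha_3,\alpha_4,\alpha_{24}\}$, is of least dimension among the subalgebras attaining it, and has negative ``derivative'', Theorem~\ref{main_3} produces a critical point of co-index $0$ or $1$.

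The remaining work is to translate these conditions into inequalities on $T_1,T_2,T_3$ after normalizing $T_4=1$, using $\alpha_3=\tfrac1{12T_3}$, $\alpha_4=\tfrac29$, $\beta_3=\tfrac5{4T_1+6T_2+2}$, and the two-branch formula for $\beta_4$ obtained above. Since $\alpha_3\le\alpha_4$ if and only if $T_3\ge\tfrac38$, and $\alpha_4\le\alpha_3$ if and only if $T_3\le\tfrac38$, the case split in the statement is precisely the split according to whether $\fk_3$ or $\fk_4$ realizes the minimum. In case~(1), $T_3\ge\tfrac38$ forces $\alpha_3\le\alpha_4\le\alpha_{24}$ with $\fk_3$ of least dimension, so one applies the theorem at $\fk=\fk_3$ and checks that $\beta_3-\alpha_3<0$ rearranges to $30T_3<2T_1+3T_2+1$. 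In case~(2), $T_3\le\tfrac38$ gives $\alpha_4\le\alpha_3$, and $7(3T_1+T_3)\ge36T_2$ puts $\beta_4$ in the branch $\beta_4=\tfrac7{18T_2}$; for $T_3<\tfrac38$ the subalgebra $\fk_4$ is the unique minimizer, the theorem applies at $\fk=\fk_4$, and $\beta_4-\alpha_4<0$ is equivalent to $T_2>\tfrac74$. In case~(3), $7(3T_1+T_3)<36T_2$ puts $\beta_4$ in the other branch, where it equals the radical expression quoted in the statement, and the displayed inequality is exactly $\beta_4-\alpha_4<0$; the argument is otherwise identical to case~(2). The final clause on the ordering of the $\alpha$'s follows at once from the two equivalences above together with $\alpha_4\le\alpha_{24}$.

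The only step needing genuine attention, rather than routine algebra, is the transitional locus $T_3=\tfrac38$ (a set of measure zero), where $\alpha_3=\alpha_4$ and Remark~\ref{rem_thm_C_hyp_relax} forces one to apply Theorem~\ref{main_3} at $\fk_3$ rather than $\fk_4$. I would dispose of it by checking directly that at $T_3=\tfrac38$ the standing inequalities of cases~(2) and~(3) already imply $8T_1+12T_2>41$, which is precisely $\beta_3-\alpha_3<0$, so the conclusion still holds. No further subalgebra enters the discussion: there are only the three listed, and $\fk_{24}$ can never be the one to which Theorem~\ref{main_3} is applied, once more because of~\eqref{inclusion}. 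Beyond this bookkeeping, the proof is elementary manipulation of the explicit expressions for the $\alpha_i$ and $\beta_i$, so I do not anticipate any substantial obstacle.
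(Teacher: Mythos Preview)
Your proposal is correct and follows essentially the same approach as the paper: compute $\alpha_3,\alpha_4,\beta_3,\beta_4$ explicitly, invoke Theorem~\ref{main_3} (in the relaxed form of Remark~\ref{rem_thm_C_hyp_relax}) at whichever of $\fk_3,\fk_4$ realizes the minimum, and translate $\beta_\fk-\alpha_\fk<0$ into the stated inequalities on the~$T_i$. The paper itself presents Proposition~\ref{prop_4sum} merely as a summary of these computations and does not write out a separate proof.

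Your treatment of the transitional locus $T_3=\tfrac38$ is in fact more careful than the paper's; the paper does not address it explicitly. Your claim there is correct: in case~(2) one gets $T_1>\tfrac{23}{8}$ and $T_2>\tfrac74$, whence $8T_1+12T_2>44>41$; in case~(3), the necessary condition $16u^2-144u-243v>0$ (where $u=3T_1+\tfrac38$, $v=T_2$) combined with $36v>7u$ forces $u>\tfrac{191.25}{16}$, and then $8u+36v>15u>179>126$, which is equivalent to $8T_1+12T_2>41$. So the boundary case does reduce to case~(1), as you assert.
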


\begin{figure}[ht]
\centering
\includegraphics[width=160mm]{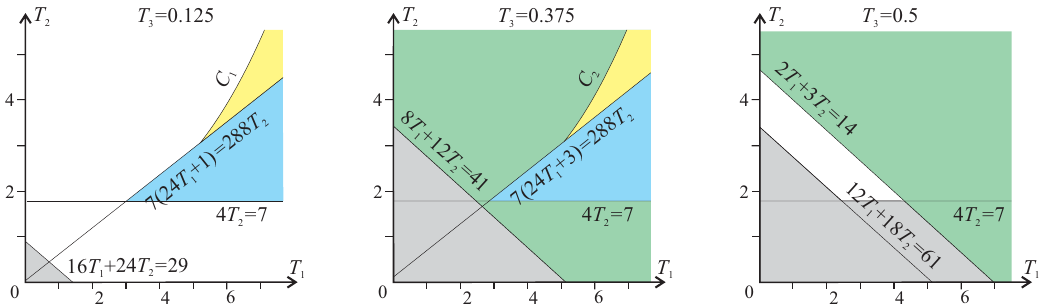}
\caption{Regions of the $(T_1,T_2)$-plane for $G_2/U(2)$}
\label{fig_F4_regions}
\end{figure}

In Figure~\ref{fig_F4_regions} we depict the sets of $T$ satisfying the conditions of the proposition. To do so, we fix $T_3$ and then draw these sets as regions in the $(T_1,T_2)$-plane. We produce pictures for a ``small" value $T_3=1/8$, the ``critical" value $T_3=3/8$, and a ``large" value $T_3=1/2$. The green regions correspond to the first collection of inequalities in Proposition~\ref{prop_4sum}, the blue regions to the second collection, and the yellow regions to the third. The curves $C_1$ and $C_2$ are given by the equation
\begin{align*}
\frac{2\sqrt{-28(3T_1+T_3)^2+144(3T_1+T_3)T_2+81T_2^2}+16(3T_1+T_3)+27T_2}{8(3T_1+T_3)^2}=\frac2{9}
\end{align*}
with $T_3=1/8$ and $T_3=3/8$, respectively. One may also use \tref{Ex_max} to find a set of $T$ for which $S_{|\M_T}$ has a global maximum; cf.~\cite[Example~5.3]{AP19}. Slices of this set are depicted in grey in \fref{fig_F4_regions}.
\bigskip

\providecommand{\bysame}{\leavevmode\hbox
	to3em{\hrulefill}\thinspace}

\end{document}